\DeclareMathOperator{\cl}{Cl_2}
\renewcommand\footnotemark{}
\begin{document}

\title{Rademacher's conjecture and expansions at roots of unity of products generating restricted partitions}

\author{
  Cormac ~O'Sullivan\footnote{{\it Date:} Jan 15, 2020.
\newline \indent \ \ \
  {\it 2010 Mathematics Subject Classification:} 11P82, 41A60.
  \newline \indent \ \ \
Support for this project was provided by a PSC-CUNY Award, jointly funded by The Professional Staff Congress and The City
\newline \indent \ \ \
University of New York.}
  }

\date{}

\maketitle

\def\s#1#2{\langle \,#1 , #2 \,\rangle}

\def\H{{\mathbf{H}}}
\def\F{{\frak F}}
\def\C{{\mathbb C}}
\def\R{{\mathbb R}}
\def\Z{{\mathbb Z}}
\def\Q{{\mathbb Q}}
\def\N{{\mathbb N}}
\def\G{{\Gamma}}
\def\GH{{\G \backslash \H}}
\def\g{{\gamma}}
\def\L{{\Lambda}}
\def\ee{{\varepsilon}}
\def\K{{\mathcal K}}
\def\Re{\mathrm{Re}}
\def\Im{\mathrm{Im}}
\def\PSL{\mathrm{PSL}}
\def\SL{\mathrm{SL}}
\def\Vol{\operatorname{Vol}}
\def\lqs{\leqslant}
\def\gqs{\geqslant}
\def\sgn{\operatorname{sgn}}
\def\res{\operatornamewithlimits{Res}}
\def\li{\operatorname{Li_2}}
\def\lip{\operatorname{Li}'_2}
\def\pl{\operatorname{Li}}

\def\clp{\operatorname{Cl}'_2}
\def\clpp{\operatorname{Cl}''_2}
\def\farey{\mathscr F}

\newcommand{\stira}[2]{{\genfrac{[}{]}{0pt}{}{#1}{#2}}}
\newcommand{\stirb}[2]{{\genfrac{\{}{\}}{0pt}{}{#1}{#2}}}
\newcommand{\norm}[1]{\left\lVert #1 \right\rVert}

\newcommand{\e}{\eqref}


\newtheorem{theorem}{Theorem}[section]
\newtheorem{lemma}[theorem]{Lemma}
\newtheorem{prop}[theorem]{Proposition}
\newtheorem{conj}[theorem]{Conjecture}
\newtheorem{cor}[theorem]{Corollary}
\newtheorem{assume}[theorem]{Assumptions}

\newcounter{coundef}
\newtheorem{adef}[coundef]{Definition}

\newcounter{counrem}
\newtheorem{remark}[counrem]{Remark}

\renewcommand{\labelenumi}{(\roman{enumi})}
\newcommand{\spr}[2]{\sideset{}{_{#2}^{-1}}{\textstyle \prod}({#1})}
\newcommand{\spn}[2]{\sideset{}{_{#2}}{\textstyle \prod}({#1})}

\numberwithin{equation}{section}

\let\originalleft\left
\let\originalright\right
\renewcommand{\left}{\mathopen{}\mathclose\bgroup\originalleft}
\renewcommand{\right}{\aftergroup\egroup\originalright}

\bibliographystyle{alpha}

\begin{abstract}
The generating function for  restricted partitions is a finite product with a Laurent expansion at each root of unity. The question of the behavior of these Laurent coefficients as the size of the product increases goes back to Rademacher and his work on partitions. Building on the methods of Drmota, Gerhold and previous results of the author, we complete this description and give the full asymptotic expansion of each coefficient at every root of unity. These techniques are also shown to give the asymptotics of Sylvester waves.
\end{abstract}

\section{Introduction}
\subsection{Restricted  partitions}
Set $(q)_N:=(1-q)(1-q^2) \cdots (1-q^N)$.
Then $1/(q)_N$ is a meromorphic function of $q\in \C$ with the well-known expansion about zero
\begin{equation}\label{ex0}
  \frac 1{(q)_N} = \sum_{n=0}^\infty p_N(n) \cdot q^n \qquad (|q|<1),
\end{equation}
where $p_N(n)$ indicates the number of partitions of $n$, restricted to have at most $N$ parts. The identity
\begin{equation} \label{-q}
  (1/q)_N = (-1)^N q^{-N(N+1)/2} (q)_N,
\end{equation}
 also implies the expansion at infinity
\begin{equation}\label{ex-if}
  \frac 1{(q)_N} = (-1)^N \sum_{n=0}^\infty p_N(n) \cdot (1/q)^{n+N(N+1)/2} \qquad (|q|>1).
\end{equation}

Since \e{ex0} and \e{ex-if} both diverge when $|q|=1$,  it is natural to also consider
the expansions of $1/(q)_N$  in  neighborhoods of roots of unity.
 The Laurent expansion about $\xi$, a primitive $k$th root of unity, may be written as
\begin{equation} \label{am}
  \frac 1{(q)_N} = \sum_{m=-\lfloor N/k\rfloor}^\infty A_m(\xi,N) \cdot (q-\xi)^m.
\end{equation}
For example, when $\xi=1$ and $N=3$ the sequence of coefficients begins
$$
\{A_m(1,3)\}_{m \gqs -3} = \{ -1/6, \ 1/4, \ -17/72, \ 25/144, \ -91/864, \ \dots\}.
$$
Later we will see  the formula
\begin{equation}\label{amsum}
 A_m(1,N)  = \frac{(-1)^N}{ N!} \sum_{j_0+j_1+j_2+ \cdots + j_N = N+m}
    B_{j_1}^{(m+2)}  B_{j_2}  \cdots  B_{j_N} \frac{1^{j_1} 2^{j_2} \cdots
N^{j_N}}{j_0 ! j_1 ! j_2 ! \cdots j_N!}
\end{equation}
as in Proposition \ref{kop}, where the notation $B_n$ in \e{amsum} indicates the $n$th Bernoulli number, and $B_n^{(\alpha)}$ its Norlund polynomial generalization with power series
\begin{equation} \label{nor}
  \left(\frac{z}{e^z-1} \right)^\alpha =\sum_{n=0}^\infty B_n^{(\alpha)} \frac{z^n}{n!} \qquad (|z|<2\pi),
\end{equation}
so that  $B_n = B_n^{(1)}$.  See \e{nor2} for a formula for $B_n^{(\alpha)}$.

We show in Section \ref{nume} that, in general, the coefficients $A_m(\xi,N)$ are in the field $\Q(\xi)$.  For fixed $m$ and $\xi$, our main goal is to understand the asymptotic behavior of $A_m(\xi,N)$ as $N\to \infty$.

\subsection{Unrestricted partitions and Rademacher's conjecture}
For $q\in \C$ we let
\begin{equation*}
  1/(q)_\infty := \lim_{N\to \infty} 1/(q)_N.
\end{equation*}
This limit only exists for $|q| \neq 1$, converging to two different holomorphic functions, as in \cite[p. 301]{Ra}:
\begin{equation}\label{ex0pp}
  \frac 1{(q)_\infty} = \begin{cases} \sum_{n=0}^\infty p(n) \cdot q^n \qquad & \text{if} \quad |q|<1, \\
0 \qquad & \text{if} \quad |q|>1.
\end{cases}
\end{equation}
The notation $p(n)$ denotes the number of unrestricted partitions of $n$. Therefore \e{ex0pp} extends  \e{ex0} and \e{ex-if} to the $N=\infty$ case. It would seem unlikely that the expansions at roots of unity, \e{am}, could be similarly extended. However, Rademacher succeeded in showing that the principal parts of \e{am} do indeed have  natural analogs when $N=\infty$.

To describe this we first recall Rademacher's famous convergent series for the partitions from \cite{Ra2}
in the form given in \cite[(128.1)]{Ra}:
\begin{equation}\label{parrad}
  p(n)=\frac{\pi^{5/2}}{12\sqrt{3}}\sum_{k=1}^\infty \frac{\mathcal A_k(n)}{k^{5/2}}L_{3/2}\left( \left(\frac{\pi}{12k}\right)^2 (24n-1)\right).
\end{equation}
The notation means
\begin{equation*}
  \mathcal A_k(n) := \sum_{\substack{0\leqslant h<k \\ (h,k)=1}} \omega_{hk} \cdot  e^{-2\pi i h n/k}, \quad \qquad L_{3/2}(y):=y^{-3/4}I_{3/2}(2\sqrt{y})
\end{equation*}
where $\omega_{hk}$ is a certain root of unity associated to the multiplier system of the Dedekind eta function and $I_{3/2}$ the usual $I$-Bessel function. Rademacher then inserted \e{parrad} into \e{ex0pp} (he was already investigating this possibility in the 1937 paper \cite{Ra2}), and after dexterous manipulations obtained the remarkable expansion
\begin{equation}\label{rraadd}
  \frac 1{(q)_\infty} = \sum_{\substack{0\leqslant h<k  \\ (h,k)=1}}
\sum_{\ell=1}^{\infty} \frac{C_{hk\ell}(\infty)}{(q-e^{2\pi ih/k})^\ell} \qquad(|q|<1)
\end{equation}
with Rademacher's coefficients  given by
\begin{equation*}
  C_{hk\ell}(\infty) := -\frac{1}{12\sqrt{3}} \left(\frac{\pi}{k}\right)^{5/2} \omega_{hk} \cdot e^{2\pi ih \ell /k}  \cdot  \Delta^{\ell -1}_\alpha L_{3/2}\left.\left( -\frac{\pi^2}{6 k^2} (\alpha+1)\right) \right|_{\alpha=1/24}.
\end{equation*}
Here $\Delta^{\ell -1}_\alpha$ is indicating $\ell-1$ applications of the difference operator which acts on any function as $\Delta_\alpha f(\alpha):=f(\alpha+1)-f(\alpha)$. For example
$$
C_{011}(\infty) = -\frac 6{25} -\frac{12 \sqrt{3}}{125 \pi}, \qquad C_{121}(\infty)  = \frac{\sqrt{3}-3}{25} +\frac{12 (\sqrt{3}+3)}{125 \pi}.
$$

Now \e{rraadd} invites a comparison with the partial fraction decomposition
\begin{equation}\label{tp}
\frac 1{(q)_N} =\sum_{\substack{0\leqslant h<k \leqslant N \\ (h,k)=1}}
\sum_{\ell=1}^{\lfloor N/k \rfloor} \frac{C_{hk\ell}(N)}{(q-e^{2\pi ih/k})^\ell}
\end{equation}
which is built out of all the principal parts of \e{am} with
\begin{equation*}
  C_{hk\ell}(N):=  A_{-\ell}(e^{2\pi i h/k},N). 
\end{equation*}
Perhaps inspired by the fact that $\lim_{N\to \infty}p_N(n)=p(n)$, Rademacher conjectured in \cite[p. 302]{Ra} that $\lim_{N\to \infty}C_{hk\ell}(N)=C_{hk\ell}(\infty)$.
A sequence of papers, including for example \cite{An,SZ,OS15}, examined this issue. Finally, independently and with different methods, Rademacher's conjecture was disproved in \cite{DrGe} and \cite{OS16,OS16b}.
\SpecialCoor
\psset{griddots=5,subgriddiv=0,gridlabels=0pt}
\psset{xunit=0.06cm, yunit=4.6cm}
\psset{linewidth=1pt}
\psset{dotsize=2pt 0,dotstyle=*}
\begin{figure}[ht]
\begin{center}
\begin{pspicture}(-20,2.8)(215,3.8) 

\savedata{\mydata}[
{{7, 3.7269}, {8, 3.5642}, {9,
  3.4411}, {10, 3.3491}, {11, 3.2812}, {12, 3.2317}, {13,
  3.1965}, {14, 3.1726}, {15, 3.1574}, {16, 3.1491}, {17,
  3.1461}, {18, 3.1471}, {19, 3.1512}, {20, 3.1576}, {21,
  3.1655}, {22, 3.1743}, {23, 3.1837}, {24, 3.1931}, {25,
  3.2022}, {26, 3.2109}, {27, 3.2188}, {28, 3.2258}, {29,
  3.2318}, {30, 3.2367}, {31, 3.2405}, {32, 3.2431}, {33,
  3.2446}, {34, 3.2450}, {35, 3.2444}, {36, 3.2428}, {37,
  3.2405}, {38, 3.2374}, {39, 3.2338}, {40, 3.2297}, {41,
  3.2254}, {42, 3.2209}, {43, 3.2164}, {44, 3.2120}, {45,
  3.2079}, {46, 3.2041}, {47, 3.2008}, {48, 3.1981}, {49,
  3.1960}, {50, 3.1945}, {51, 3.1938}, {52, 3.1937}, {53,
  3.1943}, {54, 3.1956}, {55, 3.1975}, {56, 3.2000}, {57,
  3.2029}, {58, 3.2061}, {59, 3.2097}, {60, 3.2134}, {61,
  3.2171}, {62, 3.2208}, {63, 3.2243}, {64, 3.2275}, {65,
  3.2304}, {66, 3.2327}, {67, 3.2346}, {68, 3.2358}, {69,
  3.2364}, {70, 3.2364}, {71, 3.2357}, {72, 3.2343}, {73,
  3.2324}, {74, 3.2300}, {75, 3.2271}, {76, 3.2238}, {77,
  3.2203}, {78, 3.2166}, {79, 3.2130}, {80, 3.2094}, {81,
  3.2060}, {82, 3.2029}, {83, 3.2003}, {84, 3.1982}, {85,
  3.1967}, {86, 3.1959}, {87, 3.1958}, {88, 3.1963}, {89,
  3.1976}, {90, 3.1995}, {91, 3.2021}, {92, 3.2051}, {93,
  3.2086}, {94, 3.2124}, {95, 3.2163}, {96, 3.2203}, {97,
  3.2241}, {98, 3.2277}, {99, 3.2309}, {100, 3.2335}, {101,
  3.2355}, {102, 3.2367}, {103, 3.2371}, {104, 3.2367}, {105,
  3.2354}, {106, 3.2332}, {107, 3.2303}, {108, 3.2266}, {109,
  3.2224}, {110, 3.2177}, {111, 3.2127}, {112, 3.2076}, {113,
  3.2026}, {114, 3.1980}, {115, 3.1938}, {116, 3.1904}, {117,
  3.1879}, {118, 3.1863}, {119, 3.1860}, {120, 3.1868}, {121,
  3.1890}, {122, 3.1924}, {123, 3.1970}, {124, 3.2026}, {125,
  3.2092}, {126, 3.2166}, {127, 3.2243}, {128, 3.2323}, {129,
  3.2402}, {130, 3.2476}, {131, 3.2542}, {132, 3.2598}, {133,
  3.2639}, {134, 3.2665}, {135, 3.2671}, {136, 3.2658}, {137,
  3.2624}, {138, 3.2569}, {139, 3.2493}, {140, 3.2399}, {141,
  3.2289}, {142, 3.2165}, {143, 3.2033}, {144, 3.1896}, {145,
  3.1760}, {146, 3.1630}, {147, 3.1512}, {148, 3.1411}, {149,
  3.1333}, {150, 3.1282}, {151, 3.1263}, {152, 3.1278}, {153,
  3.1329}, {154, 3.1417}, {155, 3.1542}, {156, 3.1700}, {157,
  3.1888}, {158, 3.2100}, {159, 3.2331}, {160, 3.2571}, {161,
  3.2812}, {162, 3.3044}, {163, 3.3258}, {164, 3.3442}, {165,
  3.3588}, {166, 3.3686}, {167, 3.3729}, {168, 3.3710}, {169,
  3.3626}, {170, 3.3475}, {171, 3.3257}, {172, 3.2976}, {173,
  3.2639}, {174, 3.2256}, {175, 3.1837}, {176, 3.1398}, {177,
  3.0954}, {178, 3.0524}, {179, 3.0128}, {180, 2.9783}, {181,
  2.9508}, {182, 2.9322}, {183, 2.9238}, {184, 2.9270}, {185,
  2.9425}, {186, 2.9708}, {187, 3.0119}, {188, 3.0651}, {189,
  3.1292}, {190, 3.2026}, {191, 3.2829}, {192, 3.3676}, {193,
  3.4533}, {194, 3.5367}, {195, 3.6141}, {196, 3.6817}, {197,
  3.7358}, {198, 3.7729}, {199, 3.7900}, {200, 3.7845}}
]
\dataplot[linecolor=black,linewidth=0.8pt,plotstyle=dots]{\mydata}


\psline[linecolor=gray]{->}(-10,2.85)(210,2.85)
\multirput(0,2.83)(50,0){5}{\psline[linecolor=gray](0,0)(0,0.04)}
\psline[linecolor=gray]{->}(0,2.8)(0,3.8)
\multirput(-1.5,2.9)(0,0.1){9}{\psline[linecolor=gray](0,0)(3,0)}

\psline[linecolor=red](0,3.216)(200,3.216)
\rput(214,2.9){$N$}
\rput(210,3.27){$C_{014}(\infty)$}

\rput(-9,3){$_{0.030}$}
\rput(-9,3.2){$_{0.032}$}
\rput(-9,3.4){$_{0.034}$}
\rput(-9,3.6){$_{0.036}$}

\rput(50,2.93){$_{50}$}
\rput(100,2.93){$_{100}$}
\rput(150,2.93){$_{150}$}
\rput(200,2.93){$_{200}$}

\end{pspicture}
\caption{$C_{014}(N)=A_{-4}(1,N)$ for $1 \leqslant N \leqslant 200$ \label{cfig}}
\end{center}
\end{figure}
The problem is illustrated in Figure \ref{cfig}
where it may be seen that $C_{014}(N)$ is getting very close to $C_{014}(\infty)\approx 0.03216$ for $N$ up to about $100$. After that, though, the values diverge.

Drmota and Gerhold in \cite{DrGe} succeeded in giving the main term of the  asymptotics of $C_{01\ell}(N)$ as $N\to \infty$. On the other hand,  the complete asymptotic expansion of an average of Rademacher's coefficients $C_{hk\ell}(N)$ was given by the author in \cite{OS16,OS16b}. In this paper it is shown that  combining these two approaches with some further analysis allows us to give the complete asymptotic expansion of each $C_{hk\ell}(N)$ as $N\to \infty$. The result is that $C_{hk\ell}(N)$ always eventually oscillates like a sine wave in $N$ with period approximately $31.963 k$ and exponentially growing amplitude. An interesting number $w_0 \approx 0.916 - 0.182 i$ is controlling all this behavior; it is a zero of the analytically continued dilogarithm $\li(w)$ as described in Section \ref{moi}.

The observation that $C_{hk\ell}(N)$  gets very close to $C_{hk\ell}(\infty)$ for relatively small $N$, with this agreement seeming to improve as $\ell$ increases, deserves an explanation. See also \cite[Sect. 4]{SZ} and \cite[Table 2]{OS15}. We hope to return to this issue in a future work.

\subsection{Main results}
For the above dilogarithm zero $w_0$, set $z_0:= 2\pi i+\log(1-w_0)  \approx -1.606 + 7.423 i$. This is the saddle-point that appears in the analysis.  Since our results are valid for every coefficient, not just those in the principal part, we state  them using the notation $A_m(\xi,N)$ from \e{am}. The simplest case looks at the expansion about $\xi=1$:

\begin{theorem}\label{mainthm} Fix $m\in \Z$. For all $N \in \Z_{\gqs 1}$ we have
\begin{equation}
 \label{maineq}
   A_m(1,N)
   = \Re\bigg[\frac{w_0^{-N}}{N^{1-m}} \left( c_{m,0}+\frac{c_{m,1}}{N}+ \dots +\frac{c_{m,r-1}}{N^{r-1}}\right)\bigg] + O\left(\frac{|w_0|^{-N}}{N^{r+1-m}}\right)
\end{equation}
with an implied constant\footnote{For functions $f$ and $g$ we use the big O notation $f=O(g)$, or equivalently $f \ll g$, to indicate that there exists an implied constant $C$ so that $|f|\lqs C\cdot g$ for all values of the given variables in  specified ranges.}  depending only on $m$ and the positive integer $r$.
The main term has
\begin{equation*}
  c_{m,0} =  -1/(\pi i \cdot z_0^{m} \cdot  e^{ z_0/2}),
\end{equation*}
 and in general the numbers $c_{m,j}$ are given by the formula \e{c01ell}.
\end{theorem}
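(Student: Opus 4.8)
The plan is to start from an exact integral representation for $A_m(1,N)$ and then evaluate it asymptotically by the saddle-point method. For the principal-part coefficients $C_{01\ell}(N)=A_{-\ell}(1,N)$, such a representation is already the backbone of the work of Drmota--Gerhold and of the author's earlier papers: writing $1/(q)_N$ in terms of $\exp\bigl(-\sum_{j\ge 1}\tfrac{1}{j}\tfrac{q^{jN}-q^{j}}{1-q^{j}}\bigr)$ after grouping, or extracting the Laurent coefficient by a Cauchy integral $A_m(1,N)=\frac{1}{2\pi i}\oint \frac{dq}{(q)_N(q-1)^{m+1}}$, one is led to an integral over a contour near $q=1$ whose integrand, after the substitution $q=e^{z/N}$ (or a comparable rescaling), has the form $e^{N\,\phi(z)}$ times lower-order factors. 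The ``phase'' $\phi$ is built from the dilogarithm: the dominant contribution to $\log(q)_N$ near a root of unity is $\tfrac{1}{N}\bigl(\li(1)-\li(q)\bigr)+\cdots$ via Euler--Maclaurin, so the exponential growth rate is governed by the critical points of a dilogarithm expression. This is exactly where $w_0$, the zero of the continued $\li$, and $z_0=2\pi i+\log(1-w_0)$ enter: $z_0$ is the saddle point of $\phi$.

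The key steps, in order, would be: (1) Derive the exact integral formula for $A_m(1,N)$ from \e{am} via Cauchy's theorem, or alternatively start from \e{amsum} / Proposition \ref{kop} and convert the Bernoulli/N\"orlund sum into an integral using \e{nor} as a generating function; deform the contour to pass through the relevant region near $q=1$. (2) Insert the Euler--Maclaurin asymptotic expansion of $\log(q)_N=\sum_{j=1}^N\log(1-q^j)$, valid uniformly for $q$ on the contour bounded away from $1$ but which, after the rescaling $q\mapsto e^{z/N}$, becomes an expansion in powers of $1/N$ with leading term involving $\li$. Here one must be careful with the branch of the dilogarithm, using the analytic continuation discussed in Section \ref{moi}; this is what makes $w_0$ the appropriate complex (non-real) critical value rather than something on $(0,1)$. (3) Apply the standard saddle-point/Laplace expansion at $z=z_0$: the leading term produces $c_{m,0}=-1/(\pi i\cdot z_0^m\cdot e^{z_0/2})$ (the $z_0^{-m}$ coming from the $(q-1)^{-m-1}\,dq$ factor after rescaling, the $e^{-z_0/2}$ from the next Euler--Maclaurin term $\tfrac12\log(1-q)$, and the $1/(\pi i)$ from the Gaussian integral together with the normalization), and the subleading terms give the $c_{m,j}$ as in \e{c01ell}. (4) Since $A_m(1,N)$ is real (indeed rational, by the discussion in Section \ref{nume}), and $z_0$, $w_0$ come with their complex conjugates as an equally valid conjugate saddle, combine the two conjugate saddle contributions into a single real part, which yields the $\Re[\cdots]$ in \e{maineq}; the error term $O(|w_0|^{-N}/N^{r+1-m})$ is the usual tail of the Laplace expansion, uniform in $N\ge 1$ once one checks the finitely many small-$N$ cases directly.

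The main obstacle I expect is step (2): controlling the Euler--Maclaurin expansion of $\log(q)_N$ \emph{uniformly} on the deformed contour, across the transition from the region where the expansion in $1/N$ is valid to a neighborhood of the saddle, and simultaneously tracking the correct branch of $\li$ so that the saddle genuinely sits at $z_0$ rather than at a spurious real critical point. A closely related subtlety is justifying that $z_0$ (equivalently $w_0$) is the \emph{dominant} saddle --- that no other critical point of the continued phase contributes at the same or larger exponential rate --- which presumably rests on the analysis of $\li$ in Section \ref{moi} and on the author's earlier identification of $w_0$ in \cite{OS16,OS16b}. Once the dominant saddle and the uniform expansion are in hand, the remaining work (extracting $c_{m,0}$, organizing the higher coefficients into \e{c01ell}, assembling the real part, and handling small $N$) is essentially bookkeeping. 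Finally I would remark that the same argument with $\xi=1$ replaced by a primitive $k$th root of unity, with the substitution adjusted by the factor $k$, gives the period-$\approx 31.963\,k$ behavior announced in the introduction; the proof for general $\xi$ then reduces to the case treated here together with the elementary relation between the product near $\xi$ and a product of smaller $(q)$-type factors.
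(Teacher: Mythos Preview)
Your outline is correct in spirit and follows the same overall architecture as the paper: Cauchy integral for $A_m(1,N)$, substitution $q=e^{z/N}$, asymptotic expansion of $1/(e^{z/N})_N$ with the dilogarithmic phase $p(z)$, deformation through the saddle $z_0$, Perron's saddle-point method, and pairing of conjugate saddles to produce the real part. Your identification of $c_{m,0}$ and its ingredients is right.

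Two points where the paper differs from, or goes beyond, your sketch are worth flagging. First, the expansion of $\log\bigl(1/(e^{z/N})_N\bigr)$ (Theorem~\ref{coco}) is \emph{not} obtained by Euler--Maclaurin. The paper writes the double sum as $\Phi(-z,N)-\chi(-z,N)$ and treats $\Phi$ by a Mellin-transform argument (Proposition~\ref{phi}) and $\chi$ by a direct truncation of the Bernoulli expansion of $z/(e^z-1)$ with explicit tail bounds (Proposition~\ref{chi}). Euler--Maclaurin would presumably also work, but the Mellin route gives the constants and the uniformity in $z$ cleanly.

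Second, and more substantively, your anticipated obstacle in step~(2) is real, but the concrete form it takes is not quite what you describe. The expansion in Theorem~\ref{coco} is only valid for $\Re(z)\leqslant -\delta<0$; however, the deformed contour $\mathcal{D}''$ must begin and end on the real axis and hence contains segments ($L_1$ and $L_2$ in Figure~\ref{bfig}) where $\Re(z)\geqslant 0$ and the expansion does not apply at all. Bounding $Q_m(z;1,N)$ there requires separate arguments: the symmetry \eqref{-z} for $L_1$, and for $L_2$ (which crosses the imaginary axis) a direct estimate of $\prod_j|e^{ij\theta/N}-1|^{-1}$ via the Clausen function (Proposition~\ref{jpp} and Corollary~\ref{polo}). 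This is the step your plan does not anticipate, and without it the contour cannot be closed with a negligible contribution. Once these bounds are in place, the rest of your steps (3)--(4) match the paper's Section~\ref{prft} essentially line for line.
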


Note that
\begin{equation*}
  \Re\left[c \cdot w_0^{-N}\right] = |c|e^{U N}\sin(V N+\arg(ic))
\end{equation*}
for
\begin{equation}\label{uv}
  U :=-\log |w_0| \approx 0.0680762, \quad V :=\arg(1/w_0) \approx 0.196576,
\end{equation}
demonstrating the  oscillating and exponentially increasing behavior of $A_m(1,N)$. This behavior first appeared in 2011, in the extensive computations of Sills and Zeilberger  \cite{SZ}. Theorem \ref{mainthm} in the case $m=-1$ and $r=1$ was Conjecture 6.2 of \cite{OS15} (published online by the journal in 2012). Theorem \ref{mainthm} for negative $m$ and $r=1$, with a slightly weaker error term, was proved by Drmota and Gerhold as the main result of \cite{DrGe}.   The full Theorem \ref{mainthm} (at least for $m$ negative), with the same   coefficients $c_{m,j},$  was Conjecture 1.5 of \cite{OS16}. Table \ref{jeb} compares  the theorem for $m=-1$, $N=2500$ and different values of $r$, with the actual value of $A_m(1,N)$ computed using \e{wavek2}. All decimals are correct to the accuracy shown.

\begin{table}[ht]
\centering
\begin{tabular}{ccc}
\hline
 $r$   & Theorem \ref{mainthm} & \\
\hline
 $1$   & $3.8\textcolor{gray}{4650116057434743} \times 10^{67}$ & \\
 $3$    & $3.83861\textcolor{gray}{839292505665} \times 10^{67}$ & \\
 $5$   & $3.838617993\textcolor{gray}{36810779} \times 10^{67}$ & \\
 $7$    & $3.838617993486\textcolor{gray}{50473} \times 10^{67}$  &  \\
\hline
\phantom{$A_{-1}(1,2500)$} & $ 3.83861799348646318 \times 10^{67}$ & $A_{-1}(1,2500)$\\
\hline
\end{tabular}
\caption{The approximations of Theorem \ref{mainthm} to $A_{-1}(1,2500)$.} \label{jeb}
\end{table}

The next simplest case treats the expansion coefficients in \e{am} about $\xi=-1$.

\begin{theorem}\label{mainthm2} Fix $m\in \Z$. Let $N_2$ denote the residue class of $N \bmod 2$. Then for all $N \in \Z_{\gqs 1}$ we have
\begin{equation}
 \label{maineq2}
   A_m(-1,N)
   = \Re\left[\frac{w_0^{-N/2}}{N^{1-m}} \left( d_{m,0}(N_2)+\frac{d_{m,1}(N_2)}{N}+ \dots +\frac{d_{m,r-1}(N_2)}{N^{r-1}}\right)\right] + O\left(\frac{|w_0|^{-N/2}}{N^{r+1-m}}\right)
\end{equation}
with an implied constant  depending only on $m$ and the positive integer $r$.
The main term has
\begin{equation*}
  d_{m,0}(N_2) = -\frac{\sqrt{2}}{ \pi i} \left(\frac{-2}{z_0}\right)^{m} e^{-z_0/2}  \left(1+(-1)^{N_2} e^{ z_0/2} \right)^{1/2}
\end{equation*}
 and in general the numbers $d_{m,j}(N_2)$ are given by twice \e{buzz} with $\rho=-1$ and $k=2$.
\end{theorem}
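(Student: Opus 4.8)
The plan is to reduce the $\xi=-1$ case to the same saddle-point analysis that underlies Theorem \ref{mainthm}, tracking the extra ingredient that roots of unity of order $k=2$ introduce: a splitting of the product $(q)_N$ into a ``near $-1$'' part and a ``far from $-1$'' part, the latter contributing a factor that depends on $N\bmod 2$. First I would write $A_m(-1,N)$ as a contour integral. Starting from \e{am}, Cauchy's formula gives
\begin{equation*}
  A_m(-1,N) = \frac{1}{2\pi i}\oint \frac{dq}{(q)_N\,(q+1)^{m+1}},
\end{equation*}
over a small circle about $-1$, which I would then deform and reparametrize by $q = -e^{z/N}$ (or a variant adapted to the root of unity $-1=e^{\pi i}$), so that the controlling exponential becomes $\exp\!\big(\tfrac{1}{N}\sum_{j=1}^N \log(1-(-1)^j e^{jz/N})\big)$. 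The even-index and odd-index terms behave very differently: grouping $j$ by parity, the even part produces (after Euler–Maclaurin summation) an integral whose leading behavior is governed by $\tfrac{N}{2}\int_0^1 \log(1-e^{zt})\,dt = -\tfrac{N}{2z}\li(e^z)$, i.e.\ exactly the same dilogarithm as in the $\xi=1$ analysis but with $N$ replaced by $N/2$; this is the source of the $w_0^{-N/2}$ in \e{maineq2}. The odd part contributes a sum $\tfrac{N}{2}\int_0^1 \log(1+e^{zt})\,dt$ whose exponential growth is strictly smaller, so it does \emph{not} move the saddle but appears as a slowly varying amplitude factor; its value at the saddle, together with the $N\bmod 2$ ambiguity in how many even versus odd indices occur up to $N$, is what yields the factor $\big(1+(-1)^{N_2}e^{z_0/2}\big)^{1/2}$.

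Next I would locate the saddle point. Setting the derivative of the leading exponent to zero reproduces the equation $\log(1-e^z) = $ (something), whose relevant solution is $z_0 = 2\pi i + \log(1-w_0)$ with $w_0$ the dilogarithm zero from Section \ref{moi} — the same $z_0$ as in Theorem \ref{mainthm}, since the even part of the exponent is a rescaled copy of the $\xi=1$ exponent. The steepest-descent expansion about $z_0$ then produces an asymptotic series in powers of $1/N$; the leading coefficient combines (i) the Gaussian factor $\sqrt{2\pi/(\text{second derivative})}$, which carries a $\sqrt{2}$ because of the $N/2$ rescaling, (ii) the value $e^{-z_0/2}$ and the factor $(-2/z_0)^m$ coming from the $(q+1)^{-m-1}$ and the change of variables, and (iii) the odd-part amplitude $\big(1+(-1)^{N_2}e^{z_0/2}\big)^{1/2}$. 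Matching these gives the stated $d_{m,0}(N_2)$; the higher $d_{m,j}(N_2)$ come from the standard Perron/Watson expansion of the saddle-point integral, which is precisely the content of the formula referenced as \e{buzz} with $\rho=-1$, $k=2$ (the factor of two accounting for taking the real part of a pair of complex-conjugate saddle contributions — note $w_0$ and $\overline{w_0}$ both contribute, giving the $\Re[\cdots]$). Finally, since the second-largest exponential growth rate (from the odd part, and from any other roots of unity closer to $-1$ than the saddle permits) is strictly smaller than $|w_0|^{-N/2}$, truncating the asymptotic series at order $r$ leaves an error $O\!\big(|w_0|^{-N/2}/N^{r+1-m}\big)$, uniformly in $N$ in each fixed residue class.

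The main obstacle I anticipate is \textbf{bookkeeping the parity split uniformly in $N\bmod 2$}. When $N$ is even there are $N/2$ even indices and $N/2$ odd indices in $1\le j\le N$; when $N$ is odd the counts are $\lfloor N/2\rfloor$ and $\lceil N/2\rceil$, and this half-integer discrepancy feeds into the Euler–Maclaurin endpoint corrections and shifts the amplitude by exactly the factor $e^{z_0/2}$ that appears inside $\big(1+(-1)^{N_2}e^{z_0/2}\big)^{1/2}$. Getting the branch of this square root correct — and verifying it is never zero at $z_0$, so that the main term genuinely dominates — requires care. A secondary technical point is justifying the deformation of the contour past the other second-order roots of unity and past the branch cuts of $\li$, exactly as in \cite{OS16,OS16b} and Theorem \ref{mainthm}; I would quote those estimates rather than redo them. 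Everything else — the saddle-point expansion, the identification of $z_0$ and $w_0$, the passage to $\Re[\cdots]$ via conjugate saddles — is parallel to the $\xi=1$ case and to the average-of-coefficients computation already carried out by the author, so the new content is entirely in the $k=2$ parity factor.
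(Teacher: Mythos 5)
Your outline goes wrong at the central analytic step: the treatment of the odd-index part of the product. With your substitution $q=-e^{z/N}$, the odd part of the exponent is
\begin{equation*}
-\sum_{\substack{1\lqs j\lqs N\\ j \text{ odd}}}\log\left(1+e^{jz/N}\right)\;=\;\frac{N}{2z}\left(\li(-e^{z})-\li(-1)\right)+O(1),
\end{equation*}
which is of the same order $N$ as the even part $\frac N2 p(z)$. It is therefore \emph{not} a slowly varying amplitude, and it does move the saddle: the true leading exponent is the sum, which by the distribution property $\li(e^{z})+\li(-e^{z})=\tfrac12\li(e^{2z})$ (this is \e{licl} with $k=2$) equals $\tfrac N2\, p(2z)$. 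Its critical point in your variable is $z_0/2$, not $z_0$, and the growth $w_0^{-N/2}$ arises only because $\tfrac N2 p(2z)$ takes the value $-\tfrac N2\log w_0$ at $z_0/2$ — not because the even part alone, evaluated at $z_0$, gives $\tfrac N2 p(z_0)$. If you carried out the saddle-point evaluation at $z_0$ with the odd-part exponential absorbed into the ``amplitude,'' the hypotheses of Perron's method (Assumptions \ref{asma} require the amplitude $q(z)$ to be independent of $N$) would fail, the maximum of the full integrand on a path through $z_0$ would not be at $z_0$, and the extra factor $\exp\big(\tfrac{N}{2z_0}(\li(-e^{z_0})-\li(-1))\big)$ would corrupt both the modulus and the constant. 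Relatedly, the parity factor $\big(1+(-1)^{N_2}e^{z_0/2}\big)^{1/2}$ is not ``the odd-part amplitude'': in the correct computation it assembles from the $O(1)$ (logarithmic) terms of the expansion together with $w_0^{1/2}=\big((1-e^{z_0/2})(1+e^{z_0/2})\big)^{1/2}$, with the $N\bmod 2$ dependence entering separately.

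For comparison, the paper does not prove this case directly: Theorem \ref{mainthm2} is the specialization $k=2$, $\rho=-1$ of Theorem \ref{mainthm3x}. There the log of the product is split into residue classes mod $k$ (your parity split), each class is expanded uniformly via Mellin transforms with Hurwitz zeta functions (Theorem \ref{coco2}), the order-$N$ terms are recombined by \e{licl} into $\tfrac Nk p(kz)$ with saddle $z_0/k$, Perron's method (Theorem \ref{sdle}) is applied on the contour $L_4/k$ after the error bounds away from the saddle, and the dependence on $N\bmod k$ is obtained by first treating $k\mid N$ and then multiplying by the finite correction product \e{pal}, re-expanded in $1/N$, which produces the factor $w_0^{N_k/k}\prod_{j=1}^{N_k}(1-\rho^je^{z_0/k})^{-1}$ in \e{cas2} and the coefficients \e{buzz}. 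Your overall architecture (contour integral, parity split, conjugate contributions giving the real part, Perron expansion, exponentially small error off the saddle) matches the paper, but as written the argument would not produce the stated constants; you need the duplication identity and the saddle at $z_0/2$ (equivalently $z_0/k$ after the paper's scaling), plus an honest accounting of how $N\bmod 2$ enters, to close the gap.
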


 Cases of Theorem \ref{mainthm2} were given in Conjecture 6.3 of \cite{OS15} and Conjecture 6.4 of \cite{OS16}.
With \e{maineq2} we see that the values $A_m(-1,N)$ are roughly $\sqrt{A_m(1,N)}$ in size and behave slightly differently depending on whether $N$ is odd or even. For the following general result, set $N_k$ to be $N \bmod k$ with $0\lqs N_k \lqs k-1$.

\begin{theorem}\label{mainthm3x} Fix $m\in \Z$ and  $\xi$  a primitive $k$th root of unity. Then for all  $N\in \Z_{\gqs 1}$ we have
\begin{equation}
 \label{maineq3x}
   \frac{A_m(\xi,N)}{N^{m-1}}
   = w_0^{-N/k}\sum_{j=0}^{r-1}\frac{e_{m,j}(\xi,N_k)}{N^{j}}  + \overline{w_0^{-N/k}}\sum_{j=0}^{r-1}\frac{\overline{e_{m,j}(\overline{\xi},N_k)}}{N^{j}}
+
O\left(\frac{|w_0|^{-N/k}}{N^{r}}\right)
\end{equation}
for an implied constant  depending only on $k$, $m$ and the positive integer $r$.
For $\rho=\xi$ or $\overline{\xi}$, the  numbers $e_{m,j}(\rho,N_k)$ are given in \e{buzz} with the first being
\begin{multline} \label{cas2}
  e_{m,0}(\rho,N_k) =    \frac{-z_0}{2\pi i e^{z_0/2}}
  \frac{(w_0 /k)^{1/2}}{(1-e^{ z_0/k})^{1/2} } \times \prod_{j=1}^{k-1} \left( \frac{1-\rho^{-j}e^{ z_0/k}}{1-\rho^{-j}}\right)^{j/k-1/2}\\
\times  \rho^{-m} ( z_0/k)^{-m -1}
\times w_0^{N_k/k}\prod_{j=1}^{N_k} \left( 1-\rho^{j} e^{ z_0/k}\right)^{-1}.
\end{multline}
\end{theorem}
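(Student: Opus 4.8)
The plan is to reduce Theorem \ref{mainthm3x} to a contour-integral representation for $A_m(\xi,N)$ and then apply the saddle-point method, exactly parallel to the (presumably already established) treatment of the cases $\xi=1$ and $\xi=-1$ in Theorems \ref{mainthm} and \ref{mainthm2}, but now tracking the extra finite product over $j=1,\dots,k-1$ coming from the other primitive and non-primitive $k$th roots of unity that appear in $(q)_N$. First I would write $A_m(\xi,N)$ as a Cauchy integral $\frac{1}{2\pi i}\oint (q-\xi)^{-m-1}/(q)_N\,dq$ around a small circle about $\xi$, and substitute $q = \xi e^{z/N}$ (or a similar normalization matching the earlier sections) so that $(q)_N = \prod_{n=1}^N(1-\xi^n e^{nz/N})$. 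The key algebraic step is to split this product according to the residue of $n$ modulo $k$: the terms with $k\mid n$ give $\prod_{1\le n\le N,\,k\mid n}(1-e^{nz/N})$, which is the ``main'' factor responsible for the $w_0$ singularity and is handled by the Euler--Maclaurin/dilogarithm analysis already in the paper, while the terms with $n\not\equiv 0$ contribute a bounded, slowly varying product that I would expand asymptotically using $\log(1-\xi^j e^{nz/N}) \approx \log(1-\xi^j) + (\text{correction})$ summed over arithmetic progressions. The $N_k$-dependence and the finite product $\prod_{j=1}^{N_k}(1-\rho^j e^{z_0/k})^{-1}$ in \e{cas2} arise precisely from the ``boundary'' terms where the arithmetic progression up to $N$ does not divide evenly by $k$.

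Next I would locate the saddle point: the exponent is asymptotically $N\cdot g(z)$ where $g(z)$ is built from $\frac{1}{k}\li(e^{z})$ (from the $k\mid n$ subproduct) plus the lower-order pieces, so $g'(z)=0$ recovers the same transcendental equation whose solution is $z_0 = 2\pi i + \log(1-w_0)$, with $w_0$ the dilogarithm zero fixed in Section \ref{moi}; crucially the saddle location does not move with $\xi$, only the multiplicative prefactor does. I would then deform the contour through $z_0$ and its conjugate $\overline{z_0}$ (the two dominant saddles, which is why the answer is a sum of a term and its conjugate with $\overline{\xi}$ in place of $\xi$), apply the standard saddle-point expansion with error control, and read off the full asymptotic series $\sum_j e_{m,j}(\rho,N_k)N^{-j}$; the leading coefficient $e_{m,0}$ is $\big(2\pi N g''(z_0)\big)^{-1/2}$ times the prefactor evaluated at $z_0$, and matching this against \e{cas2} is a bookkeeping check — the factor $(w_0/k)^{1/2}/(1-e^{z_0/k})^{1/2}$ is $g''(z_0)^{-1/2}$ up to constants, the product over $j=1,\dots,k-1$ with exponent $j/k-1/2$ collects both the value and the second-derivative contribution of the non-trivial arithmetic progressions, and $(z_0/k)^{-m-1}$ comes from expanding $(q-\xi)^{-m-1}$ near the saddle. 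The uniformity of the implied constant in $N$ (for fixed $k,m,r$) follows because only finitely many residue classes and finitely many boundary terms occur, so all estimates are uniform once $N$ exceeds a bound depending on $k$; the stated range $N\ge 1$ is then reached by absorbing the finitely many small $N$ into the constant.

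The main obstacle I anticipate is \textbf{rigorously controlling the non-principal arithmetic-progression products uniformly as the contour is deformed}, i.e. showing that $\prod_{n\not\equiv 0\,(k)}(1-\xi^n e^{z/N})$ — which is a product of $N(1-1/k)+O(1)$ factors, none individually small but whose logarithms must be summed against a non-real $z$ near $z_0$ — admits an asymptotic expansion in powers of $1/N$ with the precise coefficients appearing in \e{buzz}/\e{cas2}, and that this expansion holds on a full neighborhood of the saddle so that the saddle-point method applies term by term. Concretely one must apply Euler--Maclaurin (or the Abel--Plana formula, as is likely done earlier in the paper for the $\li$ term) separately on each residue class $n\equiv a\pmod k$, $1\le a\le k-1$, keeping track of the fractional endpoints that produce the $N_k$-dependent ``correction'' products, and verify that the sum of the resulting $\li$-type functions at the non-trivial $k$th roots of unity combines into the clean product form $\prod_{j=1}^{k-1}\big((1-\rho^{-j}e^{z_0/k})/(1-\rho^{-j})\big)^{j/k-1/2}$. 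A secondary technical point is justifying that $z_0$ (resp. $\overline{z_0}$) genuinely dominates all other critical points of $g$ on the relevant contour for every $k$ — this should reduce to the analysis already performed for $k=1$ since the $k\mid n$ subproduct is, after rescaling $z\mapsto z/k$, literally the $k=1$ situation, but the presence of the extra bounded factor must be checked not to create competing saddles. Once these are in place, the expansion \e{maineq3x} and the identification of $e_{m,0}$ in \e{cas2} follow from the standard machinery.
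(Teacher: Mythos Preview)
Your proposal is correct and follows essentially the same route as the paper: contour integral for $A_m(\xi,N)$, splitting $\log(\rho e^{z/N})_N$ by residue classes $\bmod\ k$, asymptotic expansion of each class (the paper does this via Mellin transforms for the $\Phi_a$ pieces and direct Bernoulli expansion for the $\chi_a$ pieces rather than Euler--Maclaurin, but the outcome is the same), reduction to the $k=1$ saddle at $z_0/k$, and Perron's saddle-point method. One organizational point: rather than tracking fractional endpoints throughout, the paper first proves everything under the assumption $k\mid N$ and only afterwards handles general $N=M$ by writing $M=N-v$ and inserting the finite multiplicative correction $\prod_{r=0}^{v-1}(1-\rho^{-r}e^{z}e^{-rz/N})$, which is then re-expanded in powers of $1/N$; this is where the $N_k$-dependent product in \e{cas2} actually appears.
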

Theorems \ref{mainthm} and \ref{mainthm2} are the specializations of Theorem \ref{mainthm3x} to the cases $k=1$, $\xi=1$ and $k=2$, $\xi=-1$ respectively. Note that empty products are always taken to mean $1$.
\begin{table}[ht]
\centering
\begin{tabular}{ccc}
\hline
 $r$   & Theorem \ref{mainthm3x} & \\
\hline
 $1$   & $-1.\textcolor{gray}{659865606172377} \times 10^{14} + \textcolor{gray}{8.051322074007782} \times 10^{14}i$ & \\
 $3$   & $-1.7293\textcolor{gray}{81228591672} \times 10^{14} + 7.893\textcolor{gray}{645244567389}  \times 10^{14}i$ & \\
 $5$   & $-1.7293466\textcolor{gray}{49041497}\times 10^{14} + 7.893754\textcolor{gray}{752690905}  \times 10^{14}i$ & \\
 $7$   & $-1.729346669\textcolor{gray}{809078}\times 10^{14} + 7.8937545945\textcolor{gray}{13810}  \times 10^{14}i$  &  \\
\hline
\phantom{$A_{-2}$} & $ -1.729346669988476 \times 10^{14}+ 7.893754594541664 \times 10^{14}i$ & $A_{-2}(e^{2\pi i/3},2500)$\\
\hline
\end{tabular}
\caption{The approximations of Theorem \ref{mainthm3x} to $A_{-2}(e^{2\pi i/3},2500)$.} \label{jeb-k3}
\end{table}
\SpecialCoor
\psset{griddots=5,subgriddiv=0,gridlabels=0pt}
\psset{xunit=0.03cm, yunit=0.4cm}
\psset{linewidth=1pt}
\psset{dotsize=2pt 0,dotstyle=*}
\begin{figure}[ht]
\begin{center}
\begin{pspicture}(720,-6.5)(1180,6.5) 

\savedata{\mydata}[
{{750, -5.1574}, {754, -5.0308}, {758, -4.7100}, {762, -4.2076},
{766, -3.5429}, {770, -2.7416}, {774, -1.8347}, {778, -0.8573}, {782,
  0.1530}, {786, 1.1571}, {790, 2.1163}, {794, 2.9936}, {798,
  3.7552}, {802, 4.3717}, {806, 4.8193}, {810, 5.0809}, {814,
  5.1463}, {818, 5.0132}, {822, 4.6866}, {826, 4.1793}, {830,
  3.5108}, {834, 2.7071}, {838, 1.7990}, {842,
  0.8218}, {846, -0.1870}, {850, -1.1883}, {854, -2.1436}, {858,
-3.0159}, {862, -3.7718}, {866, -4.3819}, {870, -4.8229}, {874,
-5.0778}, {878, -5.1367}, {882, -4.9974}, {886, -4.6654}, {890,
-4.1534}, {894, -3.4814}, {898, -2.6752}, {902, -1.7659}, {906,
-0.7887}, {910, 0.2187}, {914, 1.2176}, {918, 2.1693}, {922,
  3.0371}, {926, 3.7876}, {930, 4.3919}, {934, 4.8267}, {938,
  5.0753}, {942, 5.1280}, {946, 4.9830}, {950, 4.6458}, {954,
  4.1295}, {958, 3.4540}, {962, 2.6454}, {966, 1.7349}, {970,
  0.7576}, {974, -0.2487}, {978, -1.2453}, {982, -2.1937}, {986,
-3.0573}, {990, -3.8029}, {994, -4.4017}, {998, -4.8307}, {1002,
-5.0733}, {1006, -5.1203}, {1010, -4.9698}, {1014, -4.6276}, {1018,
-4.1071}, {1022, -3.4282}, {1026, -2.6173}, {1030, -1.7055}, {1034,
-0.7281}, {1038, 0.2772}, {1042, 1.2718}, {1046, 2.2171}, {1050,
  3.0768}, {1054, 3.8177}, {1058, 4.4114}, {1062, 4.8348}, {1066,
  5.0718}, {1070, 5.1132}, {1074, 4.9574}, {1078, 4.6105}, {1082,
  4.0859}, {1086, 3.4038}, {1090, 2.5905}, {1094, 1.6775}, {1098,
  0.6999}, {1102, -0.3046}, {1106, -1.2972}, {1110, -2.2396}, {1114,
-3.0956}, {1118, -3.8322}, {1122, -4.4209}, {1126, -4.8391}, {1130,
-5.0706}, {1134, -5.1067}, {1138, -4.9458}, {1142, -4.5943}, {1146,
-4.0658}, {1150, -3.3805}}
  ]
\dataplot[linecolor=red,linewidth=0.8pt,plotstyle=curve]{\mydata}

\savedata{\mydata}[
{{751, -4.7996}, {755, -4.1911}, {759, -3.4209}, {763, -2.5188},
{767, -1.5195}, {771, -0.4617}, {775, 0.6138}, {779, 1.6655}, {783,
  2.6529}, {787, 3.5378}, {791, 4.2862}, {795, 4.8691}, {799,
  5.2641}, {803, 5.4559}, {807, 5.4373}, {811, 5.2089}, {815,
  4.7795}, {819, 4.1658}, {823, 3.3914}, {827, 2.4862}, {831,
  1.4852}, {835,
  0.4270}, {839, -0.6477}, {843, -1.6972}, {847, -2.6812}, {851,
-3.5617}, {855, -4.3048}, {859, -4.8817}, {863, -5.2703}, {867,
-5.4556}, {871, -5.4304}, {875, -5.1958}, {879, -4.7607}, {883,
-4.1421}, {887, -3.3638}, {891, -2.4557}, {895, -1.4530}, {899,
-0.3944}, {903, 0.6795}, {907, 1.7270}, {911, 2.7079}, {915,
  3.5842}, {919, 4.3223}, {923, 4.8937}, {927, 5.2763}, {931,
  5.4553}, {935, 5.4240}, {939, 5.1835}, {943, 4.7431}, {947,
  4.1198}, {951, 3.3377}, {955, 2.4269}, {959, 1.4226}, {963,
  0.3635}, {967, -0.7096}, {971, -1.7553}, {975, -2.7332}, {979,
-3.6057}, {983, -4.3391}, {987, -4.9051}, {991, -5.2820}, {995,
-5.4551}, {999, -5.4179}, {1003, -5.1718}, {1007, -4.7263}, {1011,
-4.0986}, {1015, -3.3129}, {1019, -2.3994}, {1023, -1.3935}, {1027,
-0.3340}, {1031, 0.7385}, {1035, 1.7823}, {1039, 2.7575}, {1043,
  3.6262}, {1047, 4.3551}, {1051, 4.9161}, {1055, 5.2875}, {1059,
  5.4550}, {1063, 5.4122}, {1067, 5.1607}, {1071, 4.7103}, {1075,
  4.0783}, {1079, 3.2891}, {1083, 2.3731}, {1087, 1.3656}, {1091,
  0.3056}, {1095, -0.7662}, {1099, -1.8084}, {1103, -2.7808}, {1107,
-3.6460}, {1111, -4.3706}, {1115, -4.9267}, {1119, -5.2928}, {1123,
-5.4549}, {1127, -5.4066}, {1131, -5.1500}, {1135, -4.6948}, {1139,
-4.0586}, {1143, -3.2661}, {1147, -2.3476}}
  ]
\dataplot[linecolor=orange,linewidth=0.8pt,plotstyle=curve]{\mydata}

\savedata{\mydata}[
{{752, -2.1520}, {756, -1.4637}, {760, -0.7190}, {764, 0.0535}, {768,
  0.8239}, {772, 1.5626}, {776, 2.2409}, {780, 2.8328}, {784,
  3.3153}, {788, 3.6698}, {792, 3.8828}, {796, 3.9459}, {800,
  3.8567}, {804, 3.6186}, {808, 3.2409}, {812, 2.7382}, {816,
  2.1297}, {820, 1.4391}, {824,
  0.6929}, {828, -0.0800}, {832, -0.8498}, {836, -1.5869}, {840,
-2.2627}, {844, -2.8512}, {848, -3.3297}, {852, -3.6798}, {856,
-3.8878}, {860, -3.9459}, {864, -3.8517}, {868, -3.6089}, {872,
-3.2269}, {876, -2.7204}, {880, -2.1089}, {884, -1.4160}, {888,
-0.6685}, {892, 0.1048}, {896, 0.8740}, {900, 1.6096}, {904,
  2.2830}, {908, 2.8684}, {912, 3.3432}, {916, 3.6890}, {920,
  3.8924}, {924, 3.9458}, {928, 3.8469}, {932, 3.5997}, {936,
  3.2136}, {940, 2.7035}, {944, 2.0892}, {948, 1.3942}, {952,
  0.6455}, {956, -0.1281}, {960, -0.8968}, {964, -1.6309}, {968,
-2.3021}, {972, -2.8846}, {976, -3.3557}, {980, -3.6975}, {984,
-3.8966}, {988, -3.9455}, {992, -3.8422}, {996, -3.5908}, {1000,
-3.2008}, {1004, -2.6874}, {1008, -2.0704}, {1012, -1.3735}, {1016,
-0.6236}, {1020, 0.1503}, {1024, 0.9185}, {1028, 1.6512}, {1032,
  2.3202}, {1036, 2.8998}, {1040, 3.3676}, {1044, 3.7055}, {1048,
  3.9005}, {1052, 3.9451}, {1056, 3.8376}, {1060, 3.5821}, {1064,
  3.1885}, {1068, 2.6719}, {1072, 2.0523}, {1076, 1.3535}, {1080,
  0.6026}, {1084, -0.1716}, {1088, -0.9392}, {1092, -1.6706}, {1096,
-2.3375}, {1100, -2.9144}, {1104, -3.3789}, {1108, -3.7131}, {1112,
-3.9041}, {1116, -3.9446}, {1120, -3.8330}, {1124, -3.5737}, {1128,
-3.1765}, {1132, -2.6569}, {1136, -2.0348}, {1140, -1.3343}, {1144,
-0.5823}, {1148, 0.1921}}
  ]
\dataplot[linecolor=brown,linewidth=0.8pt,plotstyle=curve]{\mydata}

\savedata{\mydata}[
{{753, -3.2850}, {757, -2.4052}, {761, -1.4327}, {765, -0.4049}, {769,
   0.6385}, {773, 1.6572}, {777, 2.6119}, {781, 3.4659}, {785,
  4.1861}, {789, 4.7448}, {793, 5.1205}, {797, 5.2987}, {801,
  5.2724}, {805, 5.0428}, {809, 4.6187}, {813, 4.0164}, {817,
  3.2592}, {821, 2.3764}, {825, 1.4019}, {829,
  0.3733}, {833, -0.6696}, {837, -1.6866}, {841, -2.6386}, {845,
-3.4888}, {849, -4.2044}, {853, -4.7579}, {857, -5.1278}, {861,
-5.2999}, {865, -5.2677}, {869, -5.0323}, {873, -4.6029}, {877,
-3.9959}, {881, -3.2349}, {885, -2.3491}, {889, -1.3728}, {893,
-0.3435}, {897, 0.6990}, {901, 1.7145}, {905, 2.6638}, {909,
  3.5104}, {913, 4.2217}, {917, 4.7701}, {921, 5.1346}, {925,
  5.3011}, {929, 5.2632}, {933, 5.0223}, {937, 4.5878}, {941,
  3.9764}, {945, 3.2117}, {949, 2.3231}, {953, 1.3451}, {957,
  0.3151}, {961, -0.7269}, {965, -1.7410}, {969, -2.6878}, {973,
-3.5311}, {977, -4.2381}, {981, -4.7818}, {985, -5.1411}, {989,
-5.3021}, {993, -5.2588}, {997, -5.0127}, {1001, -4.5733}, {1005,
-3.9576}, {1009, -3.1894}, {1013, -2.2982}, {1017, -1.3185}, {1021,
-0.2879}, {1025, 0.7538}, {1029, 1.7664}, {1033, 2.7109}, {1037,
  3.5508}, {1041, 4.2539}, {1045, 4.7929}, {1049, 5.1472}, {1053,
  5.3030}, {1057, 5.2544}, {1061, 5.0033}, {1065, 4.5593}, {1069,
  3.9395}, {1073, 3.1679}, {1077, 2.2742}, {1081, 1.2928}, {1085,
  0.2616}, {1089, -0.7797}, {1093, -1.7909}, {1097, -2.7331}, {1101,
-3.5699}, {1105, -4.2690}, {1109, -4.8036}, {1113, -5.1531}, {1117,
-5.3038}, {1121, -5.2502}, {1125, -4.9942}, {1129, -4.5456}, {1133,
-3.9219}, {1137, -3.1470}, {1141, -2.2508}, {1145, -1.2678}, {1149,
-0.2360}}
  ]
\dataplot[linecolor=gray,linewidth=0.8pt,plotstyle=curve]{\mydata}

\savedata{\mydata}[
{{750, -5.9919}, {751, -5.6306}, {752, -2.9813}, {753, -4.1019},
{754, -5.8257}, {755, -4.9822}, {756, -2.2529}, {757, -3.1827}, {758,
-5.4680}, {759, -4.1747}, {760, -1.4703}, {761, -2.1732}, {762,
-4.9312}, {763, -3.2377}, {764, -0.6622}, {765, -1.1106}, {766,
-4.2344}, {767, -2.2057}, {768,
  0.1418}, {769, -0.0343}, {770, -3.4029}, {771, -1.1170}, {772,
  0.91220}, {773, 1.0155}, {774, -2.4675}, {775, -0.0122}, {776,
  1.6207}, {777, 1.9998}, {778, -1.4629}, {779, 1.0675}, {780,
  2.2414}, {781, 2.8820}, {782, -0.4266}, {783, 2.0816}, {784,
  2.7515}, {785, 3.6294}, {786, 0.6025}, {787, 2.9923}, {788,
  3.1326}, {789, 4.2141}, {790, 1.5861}, {791, 3.7655}, {792,
  3.3711}, {793, 4.6150}, {794, 2.4871}, {795, 4.3726}, {796,
  3.4590}, {797, 4.8176}, {798, 3.2720}, {799, 4.7913}, {800,
  3.3938}, {801, 4.8150}, {802, 3.9114}, {803, 5.0062}, {804,
  3.1791}, {805, 4.6085}, {806, 4.3817}, {807, 5.0102}, {808,
  2.8241}, {809, 4.2068}, {810, 4.6656}, {811, 4.8040}, {812,
  2.3434}, {813, 3.6264}, {814, 4.7530}, {815, 4.3963}, {816,
  1.7564}, {817, 2.8905}, {818, 4.6416}, {819, 3.8038}, {820,
  1.0866}, {821, 2.0283}, {822, 4.3363}, {823, 3.0501}, {824,
  0.3605}, {825, 1.0737}, {826, 3.8498}, {827,
  2.1650}, {828, -0.3931}, {829, 0.0644}, {830, 3.2015}, {831,
  1.1834}, {832, -1.1444}, {833, -0.9601}, {834, 2.4173}, {835,
  0.1437}, {836, -1.8637}, {837, -1.9596}, {838,
  1.5279}, {839, -0.9132}, {840, -2.5228}, {841, -2.8950}, {842,
  0.5684}, {843, -1.9460}, {844, -3.0954}, {845, -3.7295}, {846,
-0.4236}, {847, -2.9142}, {848, -3.5591}, {849, -4.4304},
{850, -1.4093}, {851, -3.7800}, {852, -3.8953}, {853, -4.9702},
{854, -2.3501}, {855, -4.5094}, {856, -4.0906}, {857, -5.3275}, {858,
-3.2092}, {859, -5.0739}, {860, -4.1369}, {861, -5.4881}, {862,
-3.9531}, {863, -5.4512}, {864, -4.0319}, {865, -5.4453}, {866,
-4.5525}, {867, -5.6262}, {868, -3.7794}, {869, -5.2004}, {870,
-4.9839}, {871, -5.5919}, {872, -3.3886}, {873, -4.7623}, {874,
-5.2304}, {875, -5.3491}, {876, -2.8741}, {877, -4.1476}, {878,
-5.2819}, {879, -4.9069}, {880, -2.2555}, {881, -3.3797}, {882,
-5.1363}, {883, -4.2819}, {884, -1.5562}, {885, -2.4877}, {886,
-4.7988}, {887, -3.4979}, {888, -0.8029}, {889, -1.5057}, {890,
-4.2821}, {891, -2.5849}, {892, -0.0243}, {893, -0.4713}, {894,
-3.6058}, {895, -1.5776}, {896, 0.7499}, {897,
  0.5759}, {898, -2.7959}, {899, -0.5148}, {900, 1.4900}, {901,
  1.5958}, {902, -1.8832}, {903, 0.5629}, {904, 2.1678}, {905,
  2.5495}, {906, -0.9027}, {907, 1.6142}, {908, 2.7574}, {909,
  3.4002}, {910, 0.1080}, {911, 2.5988}, {912, 3.2364}, {913,
  4.1156}, {914, 1.1102}, {915, 3.4790}, {916, 3.5864}, {917,
  4.6682}, {918, 2.0654}, {919, 4.2210}, {920, 3.7941}, {921,
  5.0370}, {922, 2.9370}, {923, 4.7964}, {924, 3.8519}, {925,
  5.2079}, {926, 3.6916}, {927, 5.1833}, {928, 3.7576}, {929,
  5.1745}, {930, 4.3003}, {931, 5.3670}, {932, 3.5150}, {933,
  4.9384}, {934, 4.7398}, {935, 5.3405}, {936, 3.1337}, {937,
  4.5087}, {938, 4.9935}, {939, 5.1050}, {940, 2.6286}, {941,
  3.9023}, {942, 5.0516}, {943, 4.6698}, {944, 2.0193}, {945,
  3.1427}, {946, 4.9121}, {947, 4.0519}, {948, 1.3294}, {949,
  2.2592},
{950, 4.5806}, {951, 3.2752}, {952, 0.5857}, {953, 1.2862}, {954,
  4.0701}, {955, 2.3698}, {956, -0.1830}, {957, 0.2613}, {958,
  3.4004}, {959, 1.3708}, {960, -0.9468}, {961, -0.7759}, {962,
  2.5974}, {963, 0.3168}, {964, -1.6763}, {965, -1.7853}, {966,
  1.6923}, {967, -0.7514}, {968, -2.3432}, {969, -2.7279}, {970,
  0.7201}, {971, -1.7925}, {972, -2.9217}, {973, -3.5671}, {974,
-0.2815}, {975, -2.7662}, {976, -3.3892}, {977, -4.2706}, {978,
-1.2739}, {979, -3.6350}, {980, -3.7278}, {981, -4.8110}, {982,
-2.2187}, {983, -4.3652}, {984, -3.9241}, {985, -5.1676}, {986,
-3.0793}, {987, -4.9286}, {988, -3.9707}, {989, -5.3264}, {990,
-3.8225}, {991, -5.3033}, {992, -3.8656}, {993, -5.2813}, {994,
-4.4195}, {995, -5.4749}, {996, -3.6127}, {997, -5.0339}, {998,
-4.8473}, {999, -5.4366}, {1000, -3.2217}, {1001, -4.5936}, {1002,
-5.0894}, {1003, -5.1900}, {1004, -2.7077}, {1005, -3.9775}, {1006,
-5.1363}, {1007, -4.7443}, {1008, -2.0904}, {1009, -3.2091}, {1010,
-4.9862}, {1011, -4.1168}, {1012, -1.3935}, {1013, -2.3180}, {1014,
-4.6448}, {1015, -3.3316}, {1016, -0.6439}, {1017, -1.3386}, {1018,
-4.1254}, {1019, -2.4190}, {1020,
  0.1296}, {1021, -0.3085}, {1022, -3.4478}, {1023, -1.4140}, {1024,
  0.8973}, {1025, 0.7326}, {1026, -2.6383}, {1027, -0.3554}, {1028,
  1.6295}, {1029, 1.7446}, {1030, -1.7279}, {1031, 0.7161}, {1032,
  2.2981}, {1033, 2.6886}, {1034, -0.7518}, {1035, 1.7592}, {1036,
  2.8773}, {1037, 3.5281}, {1038, 0.2524}, {1039, 2.7336}, {1040,
  3.3448}, {1041, 4.2310}, {1042, 1.2461}, {1043, 3.6020}, {1044,
  3.6828}, {1045, 4.7700}, {1046, 2.1909}, {1047, 4.3307}, {1048,
  3.8780}, {1049, 5.1245},
{1050, 3.0505}, {1051, 4.8919}, {1052, 3.9231}, {1053,
  5.2809}, {1054, 3.7918}, {1055, 5.2638}, {1056, 3.8164}, {1057,
  5.2331}, {1058, 4.3862}, {1059, 5.4322}, {1060, 3.5619}, {1061,
  4.9830}, {1062, 4.8108}, {1063, 5.3906}, {1064, 3.1695}, {1065,
  4.5403}, {1066, 5.0493}, {1067, 5.1407}, {1068, 2.6544}, {1069,
  3.9221}, {1070, 5.0926}, {1071, 4.6921}, {1072, 2.0365}, {1073,
  3.1523}, {1074, 4.9390}, {1075, 4.0621}, {1076, 1.3396}, {1077,
  2.2604}, {1078, 4.5946}, {1079, 3.2751}, {1080, 0.5906}, {1081,
  1.2810}, {1082, 4.0726}, {1083, 2.3615}, {1084, -0.1816}, {1085,
  0.2519}, {1086, 3.3933}, {1087,
  1.3564}, {1088, -0.9472}, {1089, -0.7874}, {1090, 2.5827}, {1091,
  0.2987}, {1092, -1.6767}, {1093, -1.7966}, {1094,
  1.6723}, {1095, -0.7709}, {1096, -2.3419}, {1097, -2.7370}, {1098,
  0.6972}, {1099, -1.8110}, {1100, -2.9171}, {1101, -3.5721}, {1102,
-0.3050}, {1103, -2.7816}, {1104, -3.3802}, {1105, -4.2699}, {1106,
-1.2956}, {1107, -3.6452}, {1108, -3.7132}, {1109, -4.8034}, {1110,
-2.2365}, {1111, -4.3686}, {1112, -3.9034}, {1113, -5.1519}, {1114,
-3.0913}, {1115, -4.9238}, {1116, -3.9433}, {1117, -5.3021}, {1118,
-3.8271}, {1119, -5.2894}, {1120, -3.8314}, {1121, -5.2482}, {1122,
-4.4154}, {1123, -5.4513}, {1124, -3.5720}, {1125, -4.9922}, {1126,
-4.8337}, {1127, -5.4032}, {1128, -3.1751}, {1129, -4.5440}, {1130,
-5.0658}, {1131, -5.1471}, {1132, -2.6559}, {1133, -3.9208}, {1134,
-5.1027}, {1135, -4.6927}, {1136, -2.0345}, {1137, -3.1467}, {1138,
-4.9430}, {1139, -4.0576}, {1140, -1.3349}, {1141, -2.2514}, {1142,
-4.5930}, {1143, -3.2662}, {1144, -0.5839}, {1145, -1.2696}, {1146,
-4.0660}, {1147, -2.3491}, {1148,
  0.1894}, {1149, -0.2389}, {1150, -3.3825}}
]
\dataplot[linecolor=black,linewidth=0.8pt,plotstyle=dots]{\mydata}


\psline[linecolor=gray]{->}(750,0)(1170,0)
\multirput(750,-0.2)(50,0){9}{\psline[linecolor=gray](0,0)(0,0.4)}
\psline[linecolor=gray]{->}(750,-6)(750,6.6)
\multirput(747,-6)(0,1){13}{\psline[linecolor=gray](0,0)(6,0)}

\rput(1155,1.4){$N$}
\rput(800,-0.8){$_{800}$}
\rput(850,-0.8){$_{850}$}
\rput(900,-0.8){$_{900}$}
\rput(950,-0.8){$_{950}$}
\rput(1000,-0.8){$_{1000}$}
\rput(1050,-0.8){$_{1050}$}
\rput(1100,-0.8){$_{1100}$}
\rput(1150,-0.8){$_{1150}$}

\rput(737,-6){$_{-6}$}
\rput(737,-4){$_{-4}$}
\rput(737,-2){$_{-2}$}
\rput(741,0){$_{0}$}
\rput(741,2){$_{2}$}
\rput(741,4){$_{4}$}
\rput(741,6){$_{6}$}

\end{pspicture}
\caption{$\Re(A_{-2}(i,N))*N^3 |w_0|^{N/4}$ for $750 \leqslant N \leqslant 1150$ \label{qfig}}
\end{center}
\end{figure}
Table \ref{jeb-k3} gives an example of the asymptotics of Theorem \ref{mainthm3x}, showing the increasing accuracy for larger $r$. Figure \ref{qfig} compares the real part of $A_{-2}(i,N)$ (shown as dots) with the real part of the main term ($r=1$) of Theorem \ref{mainthm3x}. We have divided by the increasing factor so that the main term of the theorem corresponds to four sine waves, depending on $N \bmod 4$. The figure shows excellent agreement after about $N=1000$.

Without much alteration, our techniques also give the asymptotics of Sylvester waves. Recall that Cayley  and Sylvester expressed the restricted partition function $p_N(n)$ in terms of polynomials. In Sylvester's formulation
\begin{equation} \label{sylthm}
    p_N(n)=\sum_{k=1}^N W_k(N,n)
\end{equation}
where each {\em wave} $W_k(N,n)$ is a polynomial in $n$ with rational coefficients and degree at most $\lfloor N/k \rfloor -1$. Note that $k$  polynomials are needed to represent $W_k(N,n)$, depending on the residue class of $n \bmod k$.
For example, if $n \equiv 0 \bmod 60$ then
\begin{equation*}
  p_5(n)
    = \overbrace{\frac{30 n^4+900 n^3+9300 n^2+38250 n+50651}{86400}}^{W_1(5,n)}
+\overbrace{\frac{2n+15}{128}}^{W_2(5,n)} +\overbrace{\frac{2}{27}}^{W_3(5,n)} +\overbrace{\frac{1}{16}}^{W_4(5,n)} +\overbrace{\frac{4}{25}}^{W_5(5,n)}.
\end{equation*}
Sylvester's Theorem  gives the formula
\begin{equation}\label{wv}
  W_k(N,n)= -\sum_{\xi} \xi^{-n}\res_{z=0} \frac{e^{-nz}}{(\xi e^z)_N}
\end{equation}
with the sum over all primitive $k$th roots of unity $\xi$. See for example \cite[pp. 119-135]{Di} for the history of this result. Note that \e{wv} makes sense for all $n\in \C$; we allow $n\in \Z$ below.

If $N$ is fixed, then we will have $p_N(n) \sim W_1(N,n)$ as $n\to \infty$. However, if  $N \to \infty$ and $n$ grows at most linearly with $N$, then the first wave $W_1(N,n)$ exhibits similar oscillating behaviour to $A_m(1,N)$ and diverges from $p_N(n)$. This is seen in the next result, giving the asymptotics of all waves in this regime.

\begin{theorem} \label{ma2}
Fix $k \in \Z_{\gqs 1}$ and $\lambda'  \in \R_{>0}$.  Then for all $N \in \Z_{\gqs 1}$ and $n \in \Z$ such that  $\lambda:= n/N$ satisfies $|\lambda| \lqs \lambda'$, we have
\begin{equation} \label{pres}
    W_k(N,n) = \Re\bigg[w_0^{-N/k}\sum_{j=0}^{r-1}\frac{a_{\lambda,j}(N_k,n_k)}{N^{j+2}}
\bigg] + O\left(\frac{|w_0|^{-N/k}}{N^{r+2}}\right)
\end{equation}
 where  $N_k \equiv N \bmod k$ and  $n_k \equiv n \bmod k$. The numbers $a_{\lambda,j}(N_k,n_k)$ are given explicitly in \e{wbuzz}. The implied constant depends only on $r \in \Z_{\gqs 1}$, $\lambda'$ and $k$.
\end{theorem}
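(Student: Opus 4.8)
The plan is to recognize \e{wv} as a contour integral that can be attacked by exactly the saddle-point machinery already developed for the coefficients $A_m(\xi,N)$ in Theorem \ref{mainthm3x}. First I would rewrite $W_k(N,n)= -\sum_{\xi} \xi^{-n}\res_{z=0} e^{-nz}/(\xi e^z)_N$ as a sum of small-circle integrals $-\frac{1}{2\pi i}\sum_\xi \xi^{-n}\oint e^{-nz}/(\xi e^z)_N\,dz$, one for each primitive $k$th root of unity $\xi$, pairing $\xi$ with $\overline\xi$ to produce the real part appearing in \e{pres}. The integrand is $e^{-nz}\prod_{j=1}^N(1-\xi e^{jz})^{-1}$; writing $n=\lambda N$ and extracting the factor $j$ from $jz$, the product behaves like $\exp\big(-\sum_{j}\log(1-\xi e^{jz})\big)$, which for the relevant $\xi$ (a $k$th root of unity) and after the substitution $z\mapsto z/k$ turns the sum into a Riemann sum for $\int_0^1\log(1-e^{zt})\,dt$, i.e. into a dilogarithm. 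This is the same mechanism that produces $\li$ and the saddle point $z_0=2\pi i+\log(1-w_0)$ in the earlier theorems; the extra factor $e^{-nz}=e^{-\lambda N z}$ just shifts the "phase" by a term linear in $z$, so the saddle-point equation becomes $\lambda$-dependent but is solved by the same $z_0$ up to the bookkeeping absorbed into the coefficients $a_{\lambda,j}$.

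Concretely, I would follow the template of the proof of Theorem \ref{mainthm3x}: (i) split the product $\prod_{j=1}^N(1-\xi e^{jz})^{-1}$ according to residues $j\bmod k$, isolating the $N_k$ "leftover" factors $\prod_{j=1}^{N_k}(1-\xi^{j}e^{z/k})^{-1}$ that account for the $N_k$-dependence in \e{wbuzz}, and similarly track $n_k$ through $\xi^{-n}=\xi^{-n_k}$; (ii) express the remaining product, via the Euler–Maclaurin formula or the known expansion of $\log(\xi e^z)_N$ in terms of Nörlund/Bernoulli data (the same expansions underlying \e{amsum} and \e{nor}), as $\exp\big(N\,\phi_\lambda(z)+\psi(z)+\sum_{i\ge 1} g_i(z)/N^{i}\big)$ for a holomorphic phase $\phi_\lambda$ whose saddle point is $z_0$; (iii) deform the small circle to a steepest-descent contour through $z_0$ (and its conjugate $\overline{z_0}$ through $\overline\xi$), and (iv) apply the standard asymptotic expansion of a Laplace-type integral to read off the coefficients $a_{\lambda,j}(N_k,n_k)$ and the error $O(|w_0|^{-N/k}/N^{r+2})$. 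The exponential rate is governed by $w_0^{-N/k}$ exactly as before because $\Re(\phi_\lambda(z_0))=-\tfrac1k\log|w_0|$ independently of $\lambda$ (the linear-in-$z$ perturbation contributes $-\lambda z_0$, which is purely a constant multiplier, not a rate); the factor $N^{-j-2}$ rather than $N^{-j}$ comes from the residue/derivative at $z=0$ in \e{wv} costing two extra powers of $N$ relative to the Laurent-coefficient normalization in \e{maineq3x}.

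I would then verify uniformity in $\lambda$ on the compact range $|\lambda|\le\lambda'$: since $\phi_\lambda$ depends holomorphically and boundedly on $\lambda$ there, the saddle point, the steepest-descent contour, and all the $g_i$ vary continuously, so the implied constant can be taken uniform, depending only on $r$, $\lambda'$ and $k$ as claimed. Finally I would check the degree bound is consistent (the polynomial $W_k(N,n)$ has degree $\le\lfloor N/k\rfloor-1$ in $n$, which is invisible in the asymptotic regime $n=O(N)$ but must not contradict it) and assemble \e{wbuzz} by matching the Laplace expansion coefficients, taking care that empty products equal $1$ when $N_k=0$ or $n_k=0$.

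The main obstacle I anticipate is item (iii)–(iv) done \emph{uniformly in $\lambda$}: one must show that the $\lambda$-dependent saddle point stays bounded away from the other singularities of the integrand (the zeros of $(\xi e^z)_N$, i.e. $z$ with $e^{jz}=\xi^{-1}$) uniformly for $|\lambda|\le\lambda'$, so that a single steepest-descent contour works for the whole range and the tail estimates are uniform. Once that geometric fact is established — essentially a perturbation of the $\lambda=0$ picture already controlled in \cite{OS16,OS16b} and in the proof of Theorem \ref{mainthm3x} — the rest is the same Laplace-method bookkeeping that yields the earlier theorems.
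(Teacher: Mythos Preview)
Your outline is essentially the paper's own argument: rewrite $W_k(N,n)$ via \eqref{wv} as a sum of contour integrals, pair $\xi$ with $\overline{\xi}$ to produce the real part, feed the product $1/(\xi e^{z/N})_N$ through the expansion of Theorem~\ref{coco2}, deform to $L_4/k$, and apply Perron's saddle-point result (Theorem~\ref{sdle}); the handling of $N_k\neq 0$ by peeling off the last $v$ factors is also exactly what the paper does.

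There is one genuine confusion, however, and it is precisely the point you flag as the ``main obstacle.'' You speak of a $\lambda$-dependent phase $\phi_\lambda$ and a $\lambda$-dependent saddle point whose uniform separation from singularities must be checked. In fact the saddle point does \emph{not} move with $\lambda$. The correct change of variables (the one you omit) is $z\mapsto z/N$ in \eqref{wv}, which turns $e^{-nz}$ into $e^{-\lambda z}$: a bounded holomorphic function on the fixed contour, with no factor of $N$ in the exponent. Thus $e^{-\lambda z}$ is absorbed into the amplitude, the paper setting $g_{\lambda,\xi}(z):=e^{-\lambda z}g_\xi(z)$, while the phase remains $r_k(z)=p(kz)/k$, identical to that in Theorem~\ref{mainthm3x}, with the same saddle point $z_0/k$. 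Uniformity in $|\lambda|\le\lambda'$ then comes for free from the $K(q)$ bound in Theorem~\ref{sdle}, since $|e^{-\lambda z}|$ is uniformly bounded on $\mathcal L/k$. Your anticipated geometric difficulty therefore evaporates; the ``same $z_0$'' you mention in passing is literally true, not merely true ``up to bookkeeping.'' Once you make this correction, the remaining steps are exactly the Laplace-method bookkeeping you describe.
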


In particular, the asymptotic expansion of the first wave is given by
\begin{equation*}
  W_1(N,\lambda N) = \Re\bigg[w_0^{-N}\sum_{j=0}^{r-1}\frac{a_{\lambda,j}(0,0)}{N^{j+2}}
\bigg] + O\left(\frac{|w_0|^{-N}}{N^{r+2}}\right)
\end{equation*}
with first coefficient $a_{\lambda,0}(0,0) = z_0 e^{-z_0(\lambda+1/2)}/(\pi i)$ as seen in \e{wcas2}. This confirms Conjecture 9.1 of \cite{OS18b}. The asymptotic expansion of the second wave is given by Theorem \ref{ma2} when $k=2$,  showing that Conjecture 9.2 of \cite{OS18b} is almost correct; the power $(-1)^N$ there should be $(-1)^{\lambda N}$. Table \ref{jebwv} gives an example of the theorem for the asymptotics of the third wave with $\lambda=1$.

\begin{table}[ht]
\centering
\begin{tabular}{ccc}
\hline
 $r$   & Theorem \ref{ma2} & \\
\hline
 $1$   &  $2.\textcolor{gray}{1243578451143945} \times 10^{32}$ & \\
 $3$    & $2.258\textcolor{gray}{2305404772980} \times 10^{32}$ & \\
 $5$   &  $2.2581936\textcolor{gray}{490316896} \times 10^{32}$ & \\
 $7$    & $2.2581936758\textcolor{gray}{669504} \times 10^{32}$  &  \\
\hline
\phantom{$W_{3}(4001,4001)$} & $2.2581936758249785 \times 10^{32}$ & $W_{3}(4001,4001)$\\
\hline
\end{tabular}
\caption{The approximations of Theorem \ref{ma2} to $W_{3}(4001,4001)$.} \label{jebwv}
\end{table}


\SpecialCoor
\psset{griddots=5,subgriddiv=0,gridlabels=0pt}
\psset{xunit=0.02cm, yunit=0.12cm}
\psset{linewidth=1pt}
\psset{dotsize=2pt 0,dotstyle=*}

\begin{figure}[ht]
\begin{center}
\begin{pspicture}(1150,65)(1850,110) 

\psline[linecolor=gray]{->}(1180,70)(1820,70)
\multirput(1200,69)(100,0){7}{\psline[linecolor=gray](0,0)(0,2)}
\psline[linecolor=gray]{->}(1200,68)(1200,110)
\multirput(1193,70)(0,10){4}{\psline[linecolor=gray](0,0)(14,0)}

\savedata{\mydata}[
{{1200., 69.7191}, {1202., 70.5394}, {1204., 70.9683}, {1206.,
  71.1997}, {1208., 71.2721}, {1210., 71.163}, {1212.,
  70.7402}, {1214., 68.6126}, {1216., 70.7919}, {1218.,
  71.6061}, {1220., 72.0328}, {1222., 72.2627}, {1224.,
  72.3337}, {1226., 72.2226}, {1228., 71.7954}, {1230.,
  69.5901}, {1232., 71.8648}, {1234., 72.6732}, {1236.,
  73.0976}, {1238., 73.3261}, {1240., 73.3956}, {1242.,
  73.2826}, {1244., 72.8509}, {1246., 70.5599}, {1248.,
  72.938}, {1250., 73.7405}, {1252., 74.1628}, {1254.,
  74.3898}, {1256., 74.4578}, {1258., 74.3428}, {1260.,
  73.9066}, {1262., 71.5203}, {1264., 74.0115}, {1266.,
  74.8082}, {1268., 75.2282}, {1270., 75.4537}, {1272.,
  75.5204}, {1274., 75.4034}, {1276., 74.9625}, {1278.,
  72.4692}, {1280., 75.0851}, {1282., 75.8761}, {1284.,
  76.294}, {1286., 76.518}, {1288., 76.5832}, {1290.,
  76.4642}, {1292., 76.0187}, {1294., 73.4033}, {1296.,
  76.159}, {1298., 76.9444}, {1300., 77.3601}, {1302.,
  77.5826}, {1304., 77.6463}, {1306., 77.5253}, {1308.,
  77.0752}, {1310., 74.3182}, {1312., 77.233}, {1314.,
  78.0129}, {1316., 78.4264}, {1318., 78.6475}, {1320.,
  78.7098}, {1322., 78.5867}, {1324., 78.1318}, {1326.,
  75.207}, {1328., 78.3073}, {1330., 79.0817}, {1332.,
  79.4931}, {1334., 79.7126}, {1336., 79.7735}, {1338.,
  79.6484}, {1340., 79.1887}, {1342., 76.0582}, {1344.,
  79.3817}, {1346., 80.1507}, {1348., 80.56}, {1350.,
  80.7781}, {1352., 80.8374}, {1354., 80.7103}, {1356.,
  80.2457}, {1358., 76.851}, {1360., 80.4564}, {1362., 81.22}, {1364.,
   81.6271}, {1366., 81.8438}, {1368., 81.9017}, {1370.,
  81.7725}, {1372., 81.303}, {1374., 77.5406}, {1376.,
  81.5312}, {1378., 82.2896}, {1380., 82.6946}, {1382.,
  82.9097}, {1384., 82.9662}, {1386., 82.8349}, {1388.,
  82.3604}, {1390., 77.9966}, {1392., 82.6062}, {1394.,
  83.3594}, {1396., 83.7622}, {1398., 83.9759}, {1400.,
  84.0309}, {1402., 83.8975}, {1404., 83.418}, {1406.,
  77.2261}, {1408., 83.6813}, {1410., 84.4294}, {1412.,
  84.8302}, {1414., 85.0424}, {1416., 85.0959}, {1418.,
  84.9604}, {1420., 84.4758}, {1422., 79.7458}, {1424.,
  84.7567}, {1426., 85.4997}, {1428., 85.8984}, {1430.,
  86.1091}, {1432., 86.1611}, {1434., 86.0236}, {1436.,
  85.5338}, {1438., 81.6163}, {1440., 85.8322}, {1442.,
  86.5702}, {1444., 86.9668}, {1446., 87.176}, {1448.,
  87.2266}, {1450., 87.0869}, {1452., 86.5919}, {1454.,
  83.1232}, {1456., 86.9078}, {1458., 87.6409}, {1460.,
  88.0354}, {1462., 88.2432}, {1464., 88.2923}, {1466.,
  88.1505}, {1468., 87.6501}, {1470., 84.4949}, {1472.,
  87.9836}, {1474., 88.7118}, {1476., 89.1043}, {1478.,
  89.3106}, {1480., 89.3582}, {1482., 89.2143}, {1484.,
  88.7085}, {1486., 85.7955}, {1488., 89.0596}, {1490.,
  89.7829}, {1492., 90.1734}, {1494., 90.3782}, {1496.,
  90.4243}, {1498., 90.2782}, {1500., 89.7671}, {1502.,
  87.0521}, {1504., 90.1357}, {1506., 90.8543}, {1508.,
  91.2427}, {1510., 91.4461}, {1512., 91.4907}, {1514.,
  91.3424}, {1516., 90.8257}, {1518., 88.2789}, {1520.,
  91.212}, {1522., 91.9258}, {1524., 92.3122}, {1526.,
  92.5141}, {1528., 92.5572}, {1530., 92.4068}, {1532.,
  91.8845}, {1534., 89.4842}, {1536., 92.2884}, {1538.,
  92.9975}, {1540., 93.3819}, {1542., 93.5824}, {1544.,
  93.624}, {1546., 93.4714}, {1548., 92.9434}, {1550.,
  90.6731}, {1552., 93.3649}, {1554., 94.0695}, {1556.,
  94.4518}, {1558., 94.6508}, {1560., 94.691}, {1562.,
  94.5361}, {1564., 94.0024}, {1566., 91.8492}, {1568.,
  94.4416}, {1570., 95.1416}, {1572., 95.5219}, {1574.,
  95.7195}, {1576., 95.7581}, {1578., 95.6011}, {1580.,
  95.0615}, {1582., 93.0151}, {1584., 95.5184}, {1586.,
  96.2139}, {1588., 96.5922}, {1590., 96.7883}, {1592.,
  96.8255}, {1594., 96.6662}, {1596., 96.1207}, {1598.,
  94.1726}, {1600., 96.5953}, {1602., 97.2863}, {1604.,
  97.6627}, {1606., 97.8574}, {1608., 97.893}, {1610.,
  97.7315}, {1612., 97.18}, {1614., 95.3231}, {1616.,
  97.6724}, {1618., 98.359}, {1620., 98.7334}, {1622.,
  98.9266}, {1624., 98.9607}, {1626., 98.7969}, {1628.,
  98.2394}, {1630., 96.4677}, {1632., 98.7496}, {1634.,
  99.4318}, {1636., 99.8042}, {1638., 99.996}, {1640.,
  100.029}, {1642., 99.8625}, {1644., 99.2988}, {1646.,
  97.6073}, {1648., 99.8269}, {1650., 100.505}, {1652.,
  100.875}, {1654., 101.066}, {1656., 101.097}, {1658.,
  100.928}, {1660., 100.358}, {1662., 98.7424}, {1664.,
  100.904}, {1666., 101.578}, {1668., 101.946}, {1670.,
  102.135}, {1672., 102.165}, {1674., 101.994}, {1676.,
  101.418}, {1678., 99.8737}, {1680., 101.982}, {1682.,
  102.651}, {1684., 103.018}, {1686., 103.205}, {1688.,
  103.233}, {1690., 103.06}, {1692., 102.478}, {1694.,
  101.002}, {1696., 103.06}, {1698., 103.725}, {1700.,
  104.089}, {1702., 104.275}, {1704., 104.302}, {1706.,
  104.127}, {1708., 103.537}, {1710., 102.127}, {1712.,
  104.137}, {1714., 104.798}, {1716., 105.161}, {1718.,
  105.346}, {1720., 105.371}, {1722., 105.193}, {1724.,
  104.597}, {1726., 103.249}, {1728., 105.215}, {1730.,
  105.872}, {1732., 106.233}, {1734., 106.416}, {1736.,
  106.439}, {1738., 106.259}, {1740., 105.657}, {1742.,
  104.369}, {1744., 106.293}, {1746., 106.946}, {1748.,
  107.305}, {1750., 107.487}, {1752., 107.508}, {1754.,
  107.326}, {1756., 106.717}, {1758., 105.487}, {1760.,
  107.371}, {1762., 108.02}, {1764., 108.377}, {1766.,
  108.557}, {1768., 108.578}, {1770., 108.393}, {1772.,
  107.776}, {1774., 106.602}, {1776., 108.449}, {1778.,
  109.094}, {1780., 109.449}, {1782., 109.628}, {1784.,
  109.647}, {1786., 109.46}, {1788., 108.836}, {1790.,
  107.717}, {1792., 109.528}, {1794., 110.169}, {1796.,
  110.522}, {1798., 110.699}, {1800., 110.716}}
]
\dataplot[linecolor=orange,linewidth=0.8pt,plotstyle=line]{\mydata}

\savedata{\mydata}[
{{1200., 79.8192}, {1205., 80.}, {1210., 80.1804}, {1215.,
  80.3604}, {1220., 80.5402}, {1225., 80.7195}, {1230.,
  80.8985}, {1235., 81.0771}, {1240., 81.2554}, {1245.,
  81.4333}, {1250., 81.6109}, {1255., 81.7881}, {1260.,
  81.965}, {1265., 82.1416}, {1270., 82.3178}, {1275.,
  82.4936}, {1280., 82.6692}, {1285., 82.8444}, {1290.,
  83.0192}, {1295., 83.1937}, {1300., 83.3679}, {1305.,
  83.5418}, {1310., 83.7154}, {1315., 83.8886}, {1320.,
  84.0615}, {1325., 84.2341}, {1330., 84.4063}, {1335.,
  84.5783}, {1340., 84.7499}, {1345., 84.9212}, {1350.,
  85.0922}, {1355., 85.2629}, {1360., 85.4333}, {1365.,
  85.6034}, {1370., 85.7732}, {1375., 85.9427}, {1380.,
  86.1118}, {1385., 86.2807}, {1390., 86.4493}, {1395.,
  86.6176}, {1400., 86.7855}, {1405., 86.9532}, {1410.,
  87.1206}, {1415., 87.2877}, {1420., 87.4546}, {1425.,
  87.6211}, {1430., 87.7873}, {1435., 87.9533}, {1440.,
  88.119}, {1445., 88.2844}, {1450., 88.4495}, {1455.,
  88.6143}, {1460., 88.7789}, {1465., 88.9432}, {1470.,
  89.1072}, {1475., 89.2709}, {1480., 89.4344}, {1485.,
  89.5976}, {1490., 89.7605}, {1495., 89.9232}, {1500.,
  90.0856}, {1505., 90.2477}, {1510., 90.4096}, {1515.,
  90.5712}, {1520., 90.7325}, {1525., 90.8936}, {1530.,
  91.0544}, {1535., 91.215}, {1540., 91.3753}, {1545.,
  91.5354}, {1550., 91.6952}, {1555., 91.8547}, {1560.,
  92.014}, {1565., 92.1731}, {1570., 92.3319}, {1575.,
  92.4904}, {1580., 92.6487}, {1585., 92.8068}, {1590.,
  92.9646}, {1595., 93.1222}, {1600., 93.2795}, {1605.,
  93.4366}, {1610., 93.5935}, {1615., 93.7501}, {1620.,
  93.9065}, {1625., 94.0626}, {1630., 94.2185}, {1635.,
  94.3742}, {1640., 94.5296}, {1645., 94.6848}, {1650.,
  94.8398}, {1655., 94.9945}, {1660., 95.149}, {1665.,
  95.3033}, {1670., 95.4574}, {1675., 95.6112}, {1680.,
  95.7648}, {1685., 95.9182}, {1690., 96.0714}, {1695.,
  96.2243}, {1700., 96.377}, {1705., 96.5295}, {1710.,
  96.6818}, {1715., 96.8338}, {1720., 96.9857}, {1725.,
  97.1373}, {1730., 97.2887}, {1735., 97.4399}, {1740.,
  97.5909}, {1745., 97.7417}, {1750., 97.8922}, {1755.,
  98.0426}, {1760., 98.1927}, {1765., 98.3426}, {1770.,
  98.4924}, {1775., 98.6419}, {1780., 98.7912}, {1785.,
  98.9403}, {1790., 99.0892}, {1795., 99.2379}, {1800., 99.3864}}
]
\dataplot[linecolor=red,linewidth=0.8pt,plotstyle=curve]{\mydata}

\savedata{\mydata}[
{{1200, 79.8192}, {1208, 80.1084}, {1216,
     80.3964}, {1224, 80.6834}, {1232, 80.9701}, {1240,
     81.2558}, {1248, 81.5397}, {1256, 81.8229}, {1264,
     82.1066}, {1272, 82.3892}, {1280, 82.6687}, {1288,
     82.9476}, {1296, 83.2294}, {1304, 83.5099}, {1312,
     83.7834}, {1320, 84.0568}, {1328, 84.3397}, {1336,
     84.6204}, {1344, 84.8832}, {1352, 85.1474}, {1360,
     85.4397}, {1368, 85.727}, {1376, 85.9656}, {1384,
     86.2092}, {1392, 86.535}, {1400, 86.8464}, {1408,
     87.0213}, {1416, 87.2084}, {1424, 87.6412}, {1432,
     88.0206}, {1440, 88.0196}, {1448, 88.0119}, {1456,
     88.7989}, {1464, 89.337}, {1472, 88.8383}, {1480.0,
     86.9685}, {1488.0, 90.0975}, {1496, 90.9028}, {1504,
     88.2183}, {1512, 91.0224}, {1520, 91.6541}, {1528,
     92.7369}, {1536, 91.7522}, {1544, 93.4829}, {1552,
     93.4957}, {1560, 94.7467}, {1568, 94.2507}, {1576,
     95.7063}, {1584, 95.5237}, {1592, 96.8368}, {1600,
     96.4962}, {1608, 97.8691}, {1616, 97.635}, {1624,
     98.9573}, {1632, 98.6795}, {1640, 100.014}, {1648,
     99.7767}, {1656, 101.089}, {1664, 100.844}, {1672,
     102.153}, {1680, 101.929}, {1688, 103.224}, {1696,
     103.004}, {1704, 104.292}, {1712, 104.084}, {1720,
     105.361}, {1728, 105.162}, {1736, 106.43}, {1744,
     106.241}, {1752, 107.499}, {1760, 107.32}, {1768,
     108.569}, {1776, 108.399}, {1784, 109.638}, {1792,
     109.478}, {1800, 110.708}}
]
\dataplot[linecolor=black,linewidth=0.8pt,plotstyle=dots]{\mydata}

\rput(1200,67){$_{1200}$}
\rput(1300,67){$_{1300}$}
\rput(1400,67){$_{1400}$}
\rput(1500,67){$_{1500}$}
\rput(1600,67){$_{1600}$}
\rput(1700,67){$_{1700}$}
\rput(1800,67){$_{1800}$}

\rput(1810,73){$n$}

\rput(1180,80){$_{80}$}
\rput(1180,90){$_{90}$}
\rput(1180,100){$_{100}$}

\rput(1555,78){$\log \big|\Re(z_0 e^{-3 z_0/2} w_0^{-n}/(\pi i n^2))\big|$}
\rput(1750,93){$\log p(n)$}

\rput(1450,96){$\log |W_1(n,n)|$}

\end{pspicture}
\caption{A `phase transition' for the first wave $W_1(n,n)$ at $n \approx 1480$ \label{wvfig}}
\end{center}
\end{figure}
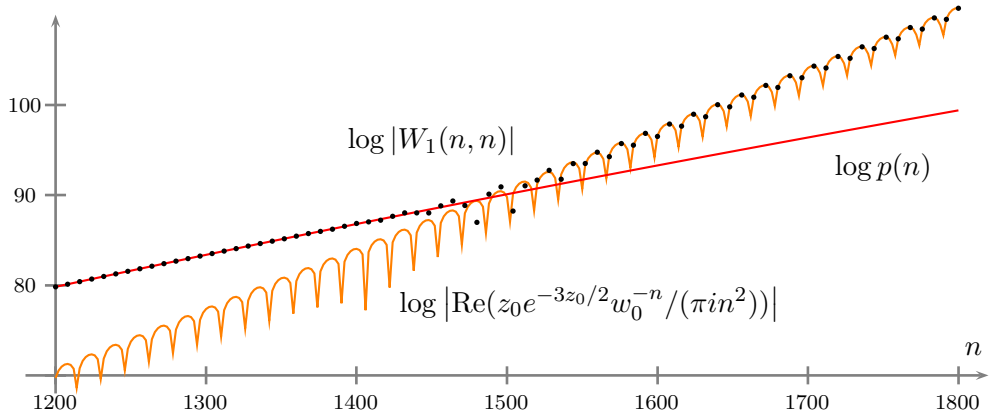


Also with $\lambda =1$, Figure \ref{wvfig} compares $W_1(n,n)$ with the first term of its asymptotic expansion, that is the real part of $z_0  e^{-3 z_0/2} w_0^{-n}/(\pi i n^2)$. In fact, $W_1(n,n)$ closely matches $p(n)$ for small $n$ and undergoes a kind of phase transition at about $n=1480$. For better visibility the figure displays the logs of the absolute values of these quantities. The values of $W_1(n,n)$ are shown as dots, sampled for $n \equiv 0 \bmod 8$. See Table 2 and Section 8.2 of \cite{OS18b} for further discussion of these comparisons. The second wave $W_2(n,n)$ exhibits similar behavior, staying close to a positive increasing function (that is approximately $p(n)^{1/2}/n^{2/3}$) until about $n=1600$ and after that following the expected oscillating asymptotics of Theorem \ref{ma2}. As with $C_{hk\ell}(N)$,  the interesting behaviour of $W_k(N,\lambda N)$ for small $N$ requires further investigation.

In this paper we focus on the product $1/(q)_N$. As $(q)_N$ and its $q$-Pochhammer variants are fundamental objects in $q$-series, we expect our techniques to be widely applicable to similar products, and series containing these products. For example, interesting evaluations  near roots of unity have already been made for the following  $q$-series:  quantum modular forms and Kashaev invariants of knots \cite{Za01,Hi}, Nahm sums \cite{Zag07,GZ}, mock theta functions \cite{FOR}, and $q$-binomial coefficients \cite{Zu}.

\subsection{Methods of proof}
Let $\xi$ be a primitive $k$th root of unity. Then by Cauchy's theorem
\begin{equation*}
  A_m(\xi,N)  =\res_{q=\xi} \frac 1{(q-\xi)^{m+1} (q)_N}
    = \frac 1{2\pi i} \int_{\mathcal C} \frac 1{(q-\xi)^{m+1} (q)_N} \, dq
\end{equation*}
where the closed path of integration $\mathcal C$ has a positive orientation, encircling $\xi$ and no other $N$th or lower roots of unity. With the change of variables $q=\xi e^{2\pi i \tau/N}$ we obtain
\begin{equation} \label{fary}
  A_m(\xi,N) = \frac 1N  \int_{\mathcal C'} \frac {\xi^{-m} e^{2\pi i \tau/N}}{(e^{2\pi i \tau/N}-1)^{m+1} (\xi e^{2\pi i \tau/N})_N} \, d\tau
\end{equation}
with $\mathcal C'$ enclosing $0$ and no other poles of $1/(\xi e^{2\pi i \tau/N})_N$. If $\xi=e^{2\pi i h/k}$ then these poles occur at
\begin{equation*}
  \tau= -\frac{Nh}k+\frac {Na}{b} \qquad \text{for} \qquad 1\lqs b \lqs N, \ a \in \Z.
\end{equation*}
If there is a pole at $\tau' \neq 0$  then
\begin{equation*}
  |\tau'| = \left|-\frac{Nh}k+\frac {Na}{b} \right| = \frac{N|-bh+a k|}{kb} \gqs \frac{N}{kb} \gqs \frac 1k
\end{equation*}
and so $\mathcal C'$ may be any circle centered at $0$ with radius less than $1/k$.

To reduce the notation,  a final change of variables  replaces $2\pi i \tau$ by $z$.
Therefore
\begin{equation} \label{qsz}
  A_m(\xi,N) = \frac 1N  \int_{\mathcal D} \frac {\xi^{-m} e^{z/N}}{2\pi i (e^{z/N}-1)^{m+1} (\xi e^{z/N})_N} \, dz
\end{equation}
for $\mathcal D$ a circle of radius less than $2\pi/k$.
Label the integrand in \e{qsz} as $Q_m(z;\xi,N)$ and let $\mathcal D'$ be the top half of $\mathcal D$, with imaginary part $\gqs 0$. Then $\overline{Q_m(z;\xi,N)} = -Q_m(\overline{z};\overline{\xi},N)$ implies that
\begin{equation} \label{qsz2}
  A_m(\xi,N) = \frac 1N  \int_{\mathcal D'} Q_m(z;\xi,N) \, dz + \frac 1N \overline{\int_{\mathcal D'} Q_m(z;\overline{\xi},N) \, dz}.
\end{equation}

This expression for $A_m(\xi,N)$ as a contour integral is our starting point. We will extend the methods of Drmota and Gerhold to write the integrands essentially in the form
\begin{equation*}
  e^{N \cdot p(z)}\left(\g_0(z)+\frac{\g_1(z)}{N}+\frac{\g_2(z)}{N^2}+ \cdots +\frac{\g_d(z)}{N^d} \right)
\end{equation*}
with $p(z)$ involving the dilogarithm.
This allows us to use the saddle-point method, in the precise form established by Perron, to achieve the desired asymptotics.

The following basic result, which will be needed in these arguments, is recorded here.

\begin{prop} \label{taylor}
Suppose $f(z)$ is holomorphic for $|z|<R$, equalling $\sum_{n=0}^\infty a_n z^n$ in this disk. Then for an implied constant depending only on $M \in \Z_{\gqs 0}$, $f$ and $R$
\begin{equation*}
  f(z)=\sum_{n=0}^{M-1} a_n z^n + O(|z|^M) \qquad \text{for} \qquad |z|\lqs R/2.
\end{equation*}
\end{prop}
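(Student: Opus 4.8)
The plan is to extract the first $M$ terms of the Taylor series and control the tail using Cauchy estimates. First I would write, for $|z| < R$,
\begin{equation*}
  f(z) - \sum_{n=0}^{M-1} a_n z^n = \sum_{n=M}^\infty a_n z^n = z^M \sum_{n=M}^\infty a_n z^{n-M} = z^M g(z),
\end{equation*}
where $g(z) := \sum_{n=0}^\infty a_{n+M} z^n$ is holomorphic on $|z|<R$ (it is a quotient of holomorphic functions, or simply a convergent power series with the same radius of convergence). It then suffices to bound $|g(z)|$ by a constant depending only on $M$, $f$ and $R$ on the smaller disk $|z| \lqs R/2$.

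The key step is the Cauchy estimate $|a_n| \lqs \max_{|w| = 3R/4} |f(w)| \cdot (4/(3R))^n$, obtained by integrating $f(w)/w^{n+1}$ over the circle $|w| = 3R/4$ (any fixed radius strictly between $R/2$ and $R$ works). Writing $C_f := \max_{|w|=3R/4}|f(w)|$, which is finite since $f$ is holomorphic on a neighborhood of this compact circle and depends only on $f$ and $R$, I get for $|z| \lqs R/2$
\begin{equation*}
  |g(z)| \lqs \sum_{n=0}^\infty |a_{n+M}|\,|z|^n \lqs C_f \left(\frac{4}{3R}\right)^M \sum_{n=0}^\infty \left(\frac{4}{3R}\right)^n \left(\frac{R}{2}\right)^n = C_f \left(\frac{4}{3R}\right)^M \cdot \frac{1}{1 - 2/3} = 3 C_f \left(\frac{4}{3R}\right)^M,
\end{equation*}
since the geometric ratio is $(4/(3R))(R/2) = 2/3 < 1$. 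This bound is a constant depending only on $M$, $f$ and $R$, which is exactly the permitted dependence, and combining with the display above gives $|f(z) - \sum_{n=0}^{M-1} a_n z^n| = |z|^M |g(z)| \lqs 3 C_f (4/(3R))^M |z|^M = O(|z|^M)$ on $|z| \lqs R/2$.

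There is no real obstacle here; the only point requiring a little care is making sure the implied constant genuinely depends only on the allowed quantities, which is why I fix the intermediate radius $3R/4$ once and for all (rather than letting it depend on $z$) and absorb $C_f$ and the factor $(4/(3R))^M$ into the constant. The case $M=0$ is the trivial statement $f(z) = O(1)$ on $|z|\lqs R/2$, which follows from the same Cauchy bound (or just continuity of $f$ on the compact disk $|z| \lqs R/2$).
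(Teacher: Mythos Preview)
Your proof is correct and essentially the same as the paper's: both fix the intermediate radius $3R/4$, set $C=\max_{|w|=3R/4}|f(w)|$, and arrive at the identical constant $3C(4/(3R))^M$. The only cosmetic difference is that the paper bounds the remainder in one step via the integral representation $f(z)-\sum_{n=0}^{M-1}a_n z^n=\frac{z^M}{2\pi i}\int_{|w|=3R/4}\frac{f(w)}{w^M(w-z)}\,dw$ and the estimate $|w-z|\gqs R/4$, whereas you unpack this into Cauchy's coefficient inequality and sum the resulting geometric series.
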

\begin{proof}
Suppose $|z|\lqs R/2$. By Taylor's Theorem, as in for example \cite[pp. 125-126]{Al}, we have
\begin{equation} \label{amaz}
  f(z)-\sum_{n=0}^{M-1} a_n z^n = \frac{z^M}{2\pi i}\int_{|w|=3R/4} \frac{f(w)}{w^M(w-z)} \, dw.
\end{equation}
Let $C=\max |f(w)|$ for $w$ on the positively oriented circle of integration $|w|=3R/4$. Also note that $|w-z|\gqs R/4$. Then the absolute value of the right side of \e{amaz} is  $\lqs 3C(4/(3R))^M |z|^M$.
\end{proof}

\section{Estimating $1/(e^{z/N})_N$}

For the simplest case of $\xi=1$, we obtain from \e{qsz2}
\begin{equation} \label{qsz3}
  A_m(1,N) = \frac {2}N  \Re\bigg[\int_{\mathcal D'} \frac { e^{z/N}}{2\pi i (e^{z/N}-1)^{m+1} ( e^{z/N})_N} \, dz\bigg]
\end{equation}
for $\mathcal D'$ the top half of a circle of radius less than $2\pi$ and centered at $0$.
We will concentrate on this case first. The methods in this section follow the ideas in \cite[Sect. 2]{DrGe}, with improvements that allow us to obtain the complete asymptotic expansion of $1/( e^{z/N})_N$ as $N \to \infty$.

The {\em polylogarithm of order} $s \in \C$ is initially defined as
\begin{equation} \label{lis-def}
  \pl_s(z):=\sum_{n=1}^\infty \frac{z^n}{n^s} \qquad (|z|<1).
\end{equation}
Clearly \e{lis-def} also makes sense for $|z|\lqs 1$ when $\Re(s)>1$. The polylogarithm of order $2$ is called the {\em dilogarithm}.
Let $\delta_{i,j}$ denote the Kronecker delta function and set
\begin{equation} \label{pgdef}
p(z)  := \frac{\li(e^{ z})-\li(1)}z,\qquad
f_\ell(z)  :=\delta_{1,\ell} \frac z{24}+\frac{B_{\ell+1}}{(\ell+1)!} z^\ell \pl_{1-\ell}(e^z) \qquad (\ell \in \Z_{\gqs 1}).
\end{equation}

\begin{theorem} \label{coco}
Suppose $N\gqs 1$. For all $z\in \C$ with $\Re(z) \lqs -\delta <0$ and  $|z|\lqs C$ we have
\begin{equation*}
  \frac{1}{(e^{z/N})_N}  =
\left( \frac{- z}{2\pi N(1-e^{ z})}\right)^{1/2}
\exp\left(N \cdot p(z)+  \sum_{\ell =1}^{L-1} \frac{f_\ell(z)}{N^{\ell}}+O\left(N^{-L}\right)\right)
\end{equation*}
for an implied constant depending only on $L \in \Z_{\gqs 1}$, $\delta$ and $C$.
\end{theorem}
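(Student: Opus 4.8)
The plan is to take logarithms, rewrite $\log\bigl(1/(e^{z/N})_N\bigr)$ as a smooth Riemann sum plus elementary terms, and then apply Euler--Maclaurin summation together with Stirling's series; this is the Euler--Maclaurin approach of the model computations, now pushed to all orders in $1/N$. Since $\Re(z)\lqs-\delta<0$ forces $|e^{jz/N}|<1$, each $e^{jz/N}-1$ lies strictly in the open left half-plane and each $1-e^{jz/N}$ strictly in the open right half-plane, so $\log\bigl(1/(e^{z/N})_N\bigr):=-\sum_{j=1}^{N}\log(1-e^{jz/N})$ is unambiguous with principal logarithms. Set
\[
  h(t):=-\log(1-e^{zt})+\log(-zt)=-\log\frac{e^{zt}-1}{zt},
\]
where all logarithms are principal and the two forms agree because $1-e^{zt}$ and $-zt$ both lie in the right half-plane, so $\tfrac{1-e^{zt}}{-zt}=\tfrac{e^{zt}-1}{zt}$ avoids $(-\infty,0]$ for $t\in(0,1]$. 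Then $h$ has a removable singularity at $t=0$ with $h(0)=0$, and $h(\,\cdot\,;z)$ extends holomorphically to a complex neighbourhood of $[0,1]$ that is fixed once $\delta$ and $C$ are. The identity $-\log(1-e^{jz/N})=h(j/N)-\log(-zj/N)$ is then immediate, and since $\sum_{j=1}^{N}\log(-zj/N)=N\log(-z)+\log N!-N\log N$ (valid termwise as $j/N>0$), summation gives
\[
  \log\frac{1}{(e^{z/N})_N}=\sum_{j=0}^{N}h(j/N)-N\log(-z)-\log N!+N\log N.
\]

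Apply Euler--Maclaurin to the smooth $h$ on $[0,1]$,
\[
  \sum_{j=0}^{N}h(j/N)=N\!\int_{0}^{1}\! h\,dt+\frac{h(0)+h(1)}{2}+\sum_{\ell=1}^{L-1}\frac{B_{\ell+1}}{(\ell+1)!}\cdot\frac{h^{(\ell)}(1)-h^{(\ell)}(0)}{N^{\ell}}+O(N^{-L}),
\]
and Stirling's series $\log N!=N\log N-N+\tfrac12\log(2\pi N)+\sum_{\ell\gqs1}\frac{B_{\ell+1}}{\ell(\ell+1)N^{\ell}}+O(N^{-L})$. Three short computations then close the argument. (i) From $\tfrac{d}{dt}\li(e^{zt})=-z\log(1-e^{zt})$ one gets $\int_{0}^{1}h\,dt=p(z)+\log(-z)-1$, so $N\!\int_{0}^{1}h\,dt-N\log(-z)$ together with the $+N$ from $-\log N!$ collapses to $N\,p(z)$. (ii) The constant-order term $\tfrac12 h(1)=\tfrac12\log\frac{-z}{1-e^{z}}$ combines with the $-\tfrac12\log(2\pi N)$ of Stirling into $\tfrac12\log\frac{-z}{2\pi N(1-e^{z})}$, which on exponentiation is exactly the asserted square-root prefactor. (iii) For the $N^{-\ell}$ terms, $\tfrac{d^{\ell}}{dt^{\ell}}[-\log(1-e^{zt})]=z^{\ell}\pl_{1-\ell}(e^{zt})$ gives $h^{(\ell)}(1)=z^{\ell}\pl_{1-\ell}(e^{z})+(-1)^{\ell-1}(\ell-1)!$, and the Taylor expansion $-\log\frac{e^{w}-1}{w}=-\tfrac{w}{2}-\sum_{\ell\gqs2}\frac{B_{\ell}}{\ell\cdot\ell!}w^{\ell}$ gives $h^{(\ell)}(0)=z^{\ell}\zeta(1-\ell)$. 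A short computation --- in which the $\delta_{1,\ell}z/24$ part of $f_{\ell}(z)$ comes out of $h'(0)=-z/2$ (via $-\tfrac{B_2}{2!}h'(0)$), and the vanishing of $B_{n}$ at odd $n\gqs3$ removes all other stray terms --- then yields
\[
  \frac{B_{\ell+1}}{(\ell+1)!}\bigl(h^{(\ell)}(1)-h^{(\ell)}(0)\bigr)=f_{\ell}(z)+\frac{B_{\ell+1}}{\ell(\ell+1)},
\]
and the extra constant $\frac{B_{\ell+1}}{\ell(\ell+1)}$ cancels exactly against the matching term of $-\log N!$. What remains is $\sum_{\ell=1}^{L-1}f_{\ell}(z)/N^{\ell}$, so exponentiating the resulting identity for $\log\bigl(1/(e^{z/N})_N\bigr)$ is precisely the theorem.

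The step I expect to demand the most care is the uniformity of all the error terms over the region $\{\,\Re z\lqs-\delta,\ |z|\lqs C\,\}$. The Euler--Maclaurin remainder after $2m$ terms is $O\bigl(N^{1-2m}\sup_{[0,1]}|h^{(2m)}(\,\cdot\,;z)|\bigr)$, so one needs uniform bounds on the high derivatives of $h(\,\cdot\,;z)$ on $[0,1]$, especially near $t=0$ where $h$ is defined only through a removable singularity. This follows from a compactness argument: the singularities of $-\log\frac{e^{zt}-1}{zt}$ in the $t$-plane are the points of $\frac{2\pi i}{z}\Z$ (with $t=0$ removable), and they stay a positive distance from $[0,1]$ that depends only on $\delta$ and $C$; hence $h(\,\cdot\,;z)$ is holomorphic and uniformly bounded on a fixed neighbourhood of $[0,1]$, and Cauchy's estimates then bound each $h^{(\ell)}$ on $[0,1]$ uniformly. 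The Stirling remainder and the Taylor-type truncation errors are uniform by inspection. A secondary, purely bookkeeping matter is keeping the branches of $\log$ consistent so that the principal-branch identities used in (i)--(iii) genuinely hold; the left-half-plane locations noted above make those checks routine.
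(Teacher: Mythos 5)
Your argument is correct, and it reaches the theorem by a genuinely different route from the paper. The paper works from the double series $\log\bigl(1/(e^{z/N})_N\bigr)=\Phi(-z,N)-\chi(-z,N)$: the piece $\Phi$ is handled by a Mellin transform, giving $z^s\G(-s)\zeta(-s)\zeta(1-s)$ and, after shifting the contour, the terms $-\pi^2N/(6z)+\tfrac z{24N}+\tfrac12\log\frac{-z}{2\pi N}$; the piece $\chi$ is estimated directly by expanding $u/(e^u-1)$ in Bernoulli numbers and controlling three explicit tails, which is where the polylogarithms $\pl_{2-\ell}(e^{z})$ enter. You instead strip off $\log(-zj/N)$ from each factor so that the remaining function $h(t)=-\log\frac{e^{zt}-1}{zt}$ is holomorphic on a fixed neighbourhood of $[0,1]$, then apply Euler--Maclaurin once and restore the stripped part via Stirling's series; the dilogarithm in $N\,p(z)$ arises as $N\int_0^1h$, the square-root prefactor from $\tfrac12h(1)$ against $\tfrac12\log(2\pi N)$, and the $f_\ell$ from the endpoint derivatives, with your bookkeeping of $h^{(\ell)}(0)=z^{\ell}\zeta(1-\ell)$, the $z/24$ term from $h'(0)=-z/2$, and the cancellation of $B_{\ell+1}/(\ell(\ell+1))$ against Stirling all checking out. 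What your route buys is self-containedness and an essentially mechanical uniformity argument: no Mellin inversion, no growth bounds for $\G$ and $\zeta$, no analytic continuation of the polylogarithm (the $\pl_{1-\ell}(e^z)$ that appear are elementary rational functions of $e^z$), and the uniform control of the Euler--Maclaurin remainder over $\{\Re z\lqs-\delta,\ |z|\lqs C\}$ follows from your observation that the singularities $2\pi i n/z$ have $|\Im|\gqs 2\pi\delta/C^2$, so Cauchy estimates on a fixed strip bound every $h^{(\ell)}$ uniformly. What the paper's Mellin route buys is that it transfers almost verbatim to the root-of-unity case (Theorem \ref{coco2}), where the same machinery with Hurwitz zeta functions and Jonqui\`ere's formula produces the $B_{\ell}(a/k)$ and $\pl_{2-\ell}(\rho^{-a}e^{z})$ terms; if you wanted your method to cover that case you would need the Euler--Maclaurin analogue for sums over arithmetic progressions (in the spirit of Lemma \ref{t1N}) pushed to all orders, which is doable but no longer shorter. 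The only points I would tighten in a written version are purely cosmetic: state the Euler--Maclaurin remainder in its standard odd-order form (your sum over all $\ell$ is equivalent since $B_{\ell+1}=0$ for even $\ell\gqs2$, and the extra included terms are uniformly $O(N^{-L})$), and spell out the joint-in-$(t,z)$ continuity behind the compactness bound on $h$.
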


\begin{proof}
Beginning with
\begin{equation*}
  \log\left(1/(e^{z/N})_N \right) = - \log \prod_{j=1}^N (1- e^{j z/N}) = -  \sum_{j=1}^N \log(1- e^{ j z/N}),
\end{equation*}
it is clear that for $\Re(z)<0$
\begin{align}
  \log\left(1/(e^{z/N})_N \right)  & = \sum_{j=1}^N \sum_{r=1}^\infty \frac{e^{ j r z/N}}r  =  \sum_{r=1}^\infty \frac{1}r \sum_{j=1}^N  e^{ j r z/N}\notag\\
& =  \sum_{r=1}^\infty \frac{1}{r(e^{-r z/N} -1)}-\sum_{r=1}^\infty \frac{e^{r z}}{r(e^{-r z/N} -1)}. \label{aci}
\end{align}
Label the first series in \e{aci} as $\Phi(-z,N)$ and the second as $\chi(-z,N)$. In this notation
\begin{equation} \label{qqz}
  1/(e^{z/N})_N = \exp\left( \Phi(-z,N) - \chi(-z,N)\right).
\end{equation}

\begin{prop} \label{phi}
If $\Re(z)<0$, $N\gqs 1$ and $c>1$ then, for an implied constant depending only on $c$,
\begin{equation*}
  \Phi(-z,N) = -\frac{\pi^2 N}{6z}+ \frac z{24N}  +\frac 12 \log\left(\frac{- z}{2\pi N} \right) +O\left( \frac {|z|^c}{N^c} \right).
\end{equation*}
\end{prop}
\begin{proof}
We wish to estimate
\begin{equation*}
  \Phi(z,t)=\sum_{r=1}^\infty \frac 1{r(e^{rz/t}-1)} \qquad \text{for} \qquad x=\Re(z)>0, \ t>0.
\end{equation*}
Then
\begin{equation*}
  \sum_{r=1}^\infty \frac 1{r|e^{rz/t}-1|} \lqs \sum_{r=1}^\infty \frac 1{r(e^{r x/t}-1)}
\end{equation*}
and with
\begin{equation} \label{roo}
  \frac{u}{e^u-1} \lqs 1 \quad \text{for} \quad u> 0, \qquad \frac{1}{e^u-1} \lqs 2e^{-u} \quad \text{for} \quad u\gqs 1
\end{equation}
we find
\begin{equation*}
  \Phi(z,t)\ll \left\{
              \begin{array}{ll}
                e^{-x/t}, & \hbox{if $0<t\lqs x$;} \\
                t/x, & \hbox{if $x\lqs t < \infty$.}
              \end{array}
            \right.
\end{equation*}
Therefore the Mellin transform
\begin{equation*}
  \mathcal M\Phi(z,\cdot)(s):=\int_0^\infty \Phi(z,t)t^{s-1}\, dt
\end{equation*}
converges for $\Re(s)<-1$ and  interchanging integration and summation is possible:
\begin{align}
  \mathcal M\Phi(z,\cdot)(s)  & = \int_0^\infty \sum_{r=1}^\infty \frac 1{r(e^{rz/t}-1)} t^{s-1}\, dt \notag\\
   & = \sum_{r=1}^\infty \frac 1r \int_0^\infty  \frac{t^{s-1}}{e^{rz/t}-1} \, dt
 = \sum_{r=1}^\infty \frac 1r \int_0^\infty  \frac{u^{-s-1}}{e^{rz u}-1} \, du. \label{dday}
\end{align}
Since
\begin{equation*}
  \int_0^\infty \frac{u^{s-1}}{e^u-1}\, du = \G(s)\zeta(s) \qquad (\Re(s)>1)
\end{equation*}
we make the change of variables $w=rzu$ in \e{dday}. The new path of integration through $z$ may be moved to the positive real line and so, for $\Re(s)<-1$,
\begin{equation} \label{mell}
  \mathcal M\Phi(z,\cdot)(s) = z^s \sum_{r=1}^\infty r^{s-1} \int_0^\infty \frac{w^{-s-1}}{e^w-1}\, dw = z^s \G(-s)\zeta(-s)\zeta(1-s).
\end{equation}

The right side of \e{mell} has exponential decay as $|\Im(s)|\to \infty$ due to well-known bounds for $\G$ and $\zeta$. Therefore we may apply Mellin inversion, as in for example \cite[Thm. 2]{Fl95}, to obtain
\begin{align*}
  \Phi(z,t) & =\frac{1}{2\pi i} \int_{(c)} \mathcal M\Phi(z,\cdot)(s) \cdot  t^{-s}\, ds \\
& =\frac{1}{2\pi i} \int_{(c)} z^s \G(-s)\zeta(-s)\zeta(1-s) t^{-s}\, ds
\end{align*}
with integration along the vertical line with real part $c<-1$. The integrand only has poles at $s=-1, 0, 1$ as the trivial zeros of the zetas cancel the remaining  poles of the gamma function. Moving $c$ to the right and computing the residues shows
\begin{equation*}
   \Phi(z,t) = \frac{\pi^2 t}{6z}+\frac 12 \log \left(\frac z{2\pi t}\right)-\frac{z}{24 t} + \frac{1}{2\pi i} \int_{(c)} z^s \G(-s)\zeta(-s)\zeta(1-s) t^{-s}\, ds
\end{equation*}
for all $c>1$. This completes the proof of the proposition.
\end{proof}

The following easy lemma will be needed in the proof of Proposition \ref{chi}.

\begin{lemma} \label{crx}
For all real $c,d,x$ with $d,x >0$ we have
\begin{equation*}
  \sum_{r \in \Z, \ r\gqs d} r^c e^{-rx} \lqs
  \begin{cases}
     (1+1/x)   \cdot e^{-dx}, & \hbox{if $c\lqs 0$;} \\
(1+2/x) \cdot \left( c/x\right)^c  \cdot e^{-dx/2}, & \hbox{if $c>0$.}
  \end{cases}
\end{equation*}
\end{lemma}
\begin{proof}
If $c\lqs 0$ then $r^c\lqs 1$ and
\begin{equation*}
  \sum_{r \in \Z, \ r\gqs d} r^c e^{-rx} \lqs \sum_{r \in \Z, \ r\gqs d}  e^{-rx} \lqs \frac{e^{-dx}}{1-e^{-x}}.
\end{equation*}
With the left inequality in \e{roo} we have $(1-e^{-x})^{-1} \lqs 1+1/x$.

For $c>0$
write the summand as $r^c e^{-rx/2} \cdot e^{-rx/2}$. Then $r^c e^{-rx/2}$ is maximized for $r=2c/x$. Therefore
\begin{equation*}
  \sum_{r \in \Z, \ r\gqs d} r^c e^{-rx} \lqs \left( \frac{2c}{ex}\right)^c \sum_{r \in \Z, \ r\gqs d} e^{-rx/2}
\end{equation*}
and the result follows similarly.
\end{proof}

We next estimate $-\chi(-z,N)$. It is possible to do this using the Mellin transform approach of Proposition \ref{phi}, though this requires establishing bounds for the analytically continued polylogarithm. Instead  we use a more direct argument, taking advantage of the exponential decay of the numerator $e^{rz}$ in \e{aci}.

\begin{prop} \label{chi}
Suppose $N\gqs 1$, $\Re(z) \lqs -\delta <0$ and  $|z|\lqs C$. Then
\begin{equation*}
  -\chi(-z,N) = \frac N{ z}\li(e^{ z})  -\frac 12 \log(1-e^{ z}) +\sum_{\ell =2}^{L} 
\frac{B_{\ell}}{\ell!}  \left(\frac{ z}{N}\right)^{\ell-1}\pl_{2-\ell}(e^{ z}) +O\left( \frac {1}{N^{L}} \right)
\end{equation*}
for an implied constant depending only on $\delta$, $C$  and the positive integer $L$.
\end{prop}
\begin{proof}
We consider
\begin{equation} \label{czt}
  \chi(z,t) =\sum_{r=1}^\infty \frac{e^{-rz}}{r(e^{rz/t}-1)} \qquad \text{for} \qquad x=\Re(z)>0, \ t>0.
\end{equation}
Let $E_1$ be the tail of this series for $r\gqs t/|z|$. Employing the left inequality in \e{roo} shows the bound
\begin{align}
 |E_1| \lqs \sum_{r\gqs t/|z|} \frac{e^{-rx}}{r(e^{rx/t}-1)} & = \frac tx \sum_{r\gqs t/|z|} \frac{e^{-rx}}{r^2}
\frac{rx/t}{e^{rx/t}-1} \notag\\
   & \lqs \frac tx \sum_{r\gqs t/|z|} \frac{e^{-rx}}{r^2} \lqs \zeta(2)  \frac tx \exp\left(-\frac{t x}{|z|} \right). \label{erj}
\end{align}
By \e{nor} and Proposition \ref{taylor}
\begin{equation*}
  \frac{z}{e^z-1}=\sum_{\ell=0}^{L} B_\ell\frac{z^\ell}{\ell!} + O\left( |z|^{L+1}\right)
\end{equation*}
for $|z|\lqs \pi$, with an implied constant depending only on $L \gqs 1$. Then the initial part of the series
\e{czt} is
\begin{align}
  \sum_{r < t/|z|} \frac{e^{-rz}}{r(e^{rz/t}-1)} & = \sum_{r < t/|z|}
\frac{e^{-rz}}{r^2} \frac tz  \frac{rz/t}{e^{rz/t}-1} \notag\\
   & = \sum_{r < t/|z|}
\frac{e^{-rz}}{r^2} \frac tz  \left( \sum_{\ell=0}^{L} B_\ell\frac{(rz/t)^\ell}{\ell!} + O\left( |rz/t|^{L+1}\right)\right).
\label{az}
\end{align}
Hence \e{az} equals
\begin{equation}\label{az2}
  \sum_{\ell=0}^{L} \frac{B_\ell}{\ell!} \left( \frac zt \right)^{\ell-1}\sum_{r < t/|z|} r^{\ell-2}e^{-r z}
\end{equation}
with an error
\begin{equation}\label{errz}
  E_2 \ll \left( \frac{|z|}t \right)^{L} \sum_{r < t/|z|} r^{L-1}e^{-r x}
\ll \left( \frac{|z|}t \right)^{L}\left(1+\frac 1x\right) x^{1-L}
\end{equation}
using Lemma \ref{crx} with $d=1$.
Extending the sum over $r$ in \e{az2} to infinity produces the polylogarithm $\pl_{2-\ell}(e^{-z})$ and
 a further error
\begin{align}
 E_3 & \ll \sum_{\ell=0}^{L} \frac{|B_\ell|}{\ell!} \left( \frac{|z|}t \right)^{\ell-1}\sum_{r \gqs t/|z|} r^{\ell-2}e^{-r x}\notag \\
& \ll \left(\frac {t}{|z|}+\left(\frac{|z|}{t}\right)^{L-1} \right)
\sum_{\ell=0}^{L} \sum_{r \gqs t/|z|} r^{\ell-2}e^{-r x}\notag  \\
& \ll \left(\frac {t}{|z|}+\left(\frac{|z|}{t}\right)^{L-1} \right)
\left(1+\frac 1x\right) (1+x^{2-L})\exp\left(-\frac{t x}{2|z|} \right). \label{errz2}
\end{align}
We have shown
\begin{equation*}
  \chi(z,t) =  E_1+E_2+E_3 + \sum_{\ell=0}^{L} \frac{B_\ell}{\ell!} \left( \frac zt \right)^{\ell-1} \pl_{2-\ell}(e^{-z}).
\end{equation*}
For $0< \delta \lqs x$ and $|z|\lqs C$, \e{erj}, \e{errz} and \e{errz2} imply
\begin{equation} \label{erj3}
  E_1+E_2+E_3 \ll  \frac{1}{t^{L}} + (1+t+t^{1-L})  \exp\left(-\frac{\delta t}{2C} \right) \ll \frac{1}{t^{L}}
\end{equation}
for  an implied constant depending only on $L$, $\delta$ and $C$. Recall that $B_0=1$, $B_1=-1/2$ and  $\pl_1(z)=-\log(1-z)$. Hence
\begin{equation} \label{thyb}
  \chi(z,t) = \frac tz \li(e^{-z}) +\frac 12 \log(1-e^{-z}) + \sum_{\ell=2}^{L} \frac{B_{\ell}}{\ell!} \left( \frac zt \right)^{\ell-1} \pl_{2-\ell}(e^{-z}) +O\left(\frac 1{t^{L}}\right)
\end{equation}
and the proposition follows, where we simplified the signs using that  $B_\ell =0$ for $\ell \gqs 3$ and odd.
\end{proof}

Theorem \ref{coco} is an easy consequence of \e{qqz} and Propositions \ref{phi}, \ref{chi}.
\end{proof}

An interesting result with similarities to Theorem \ref{coco} is obtained in \cite[p. 53]{Zag07}.

\begin{cor} \label{cocor}
Suppose $N\gqs 1$ and $m \in \Z$. For all $z\in \C$ with $\Re(z) \lqs -\delta <0$ and  $|z|\lqs C$ we have
\begin{equation*}
  \frac { e^{z/N}}{ (e^{z/N}-1)^{m+1} ( e^{z/N})_N} \ll
N^{m+1}\exp\left(N \cdot \Re(p(z))\right)
\end{equation*}
for an implied constant depending only on $m$, $\delta$ and $C$.
\end{cor}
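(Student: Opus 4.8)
The plan is to bound separately the three factors making up the expression on the left, namely $e^{z/N}$, $1/(e^{z/N}-1)^{m+1}$, and $1/(e^{z/N})_N$, over the region $\Re(z)\lqs-\delta<0$, $|z|\lqs C$, $N\gqs1$, and then multiply the bounds.

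First I would apply Theorem \ref{coco} with $L=1$ to the last factor: on this region $\bigl|1/(e^{z/N})_N\bigr| = \bigl|{-z}/(2\pi N(1-e^{z}))\bigr|^{1/2}\exp\bigl(N\Re p(z)+O(N^{-1})\bigr)$, and since $|z|\lqs C$ and $|1-e^{z}|\gqs1-e^{\Re z}\gqs1-e^{-\delta}>0$ here, the algebraic prefactor is $\ll N^{-1/2}$, while the $O(N^{-1})$ in the exponent only multiplies by a bounded constant; hence $\bigl|1/(e^{z/N})_N\bigr|\ll N^{-1/2}\exp(N\Re p(z))$. The factor $e^{z/N}$ is harmless, since $|e^{z/N}|=e^{\Re(z)/N}\lqs1$ because $\Re(z)<0$.

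The remaining factor $1/(e^{z/N}-1)^{m+1}$ is where the real work is, and it is the main obstacle precisely because the bound must hold uniformly down to $N=1$, not merely for large $N$. I would establish $|e^{z/N}-1|\asymp|z|/N$ with constants depending only on $\delta$ and $C$. The upper bound $|e^{z/N}-1|\lqs|z/N|\,e^{|z/N|}\lqs Ce^{C}/N$ is immediate from the power series (using $N\gqs1$). For the lower bound, from $|e^{w}-1-w|\lqs|w|^{2}e^{|w|}$ one gets $|e^{w}-1|\gqs|w|\bigl(1-|w|e^{|w|}\bigr)\gqs|w|/2$ once $|w|$ lies below a fixed absolute threshold; applied with $w=z/N$ this handles every $N$ larger than a suitable constant multiple of $C$, since there $|z/N|$ is small and $|z|\gqs|\Re z|\gqs\delta$ gives $|e^{z/N}-1|\gqs\delta/(2N)$. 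The finitely many remaining (bounded) values of $N$ are dealt with by $|e^{z/N}-1|\gqs1-|e^{z/N}|\gqs1-e^{-\delta/N}>0$, which is a positive constant there and hence is $\gqs(\mathrm{const})/N$ on that range. Combining the two sides, $1/|e^{z/N}-1|^{m+1}\ll N^{m+1}$ in all cases: use the lower bound on $|e^{z/N}-1|$ when $m+1\gqs0$ and the upper bound when $m+1<0$. Multiplying the three estimates gives the integrand $\ll1\cdot N^{m+1}\cdot N^{-1/2}\exp(N\Re p(z))\ll N^{m+1}\exp(N\Re p(z))$, which is the assertion (in fact with the slightly stronger exponent $N^{m+1/2}$), and the implied constant depends only on $m$, $\delta$ and $C$.
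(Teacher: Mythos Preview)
Your proof is correct and follows essentially the same route as the paper: bound $e^{z/N}$, $(e^{z/N}-1)^{-m-1}$, and $1/(e^{z/N})_N$ separately, the last via Theorem \ref{coco} with $L=1$, and note that $(1-e^{z})^{-1/2}$ is $O(1)$ on the region. The only cosmetic difference is that the paper obtains $|e^{z/N}-1|^{-m-1}\ll N^{m+1}$ by applying \eqref{nor} and Proposition \ref{taylor} to $(w/(e^{w}-1))^{m+1}$ at $w=z/N$ (for $N\gqs C/\pi$, then adjusting the constant for small $N$), whereas you establish $|e^{z/N}-1|\asymp |z|/N$ directly from elementary power-series inequalities; both arguments split off the finitely many small $N$ in the same way.
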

\begin{proof}
Suppose  $\Re(z) \lqs -\delta <0$ and  $|z|\lqs C$. By \e{nor} and Proposition \ref{taylor},
\begin{equation*}
  \left( \frac{z/N}{e^{z/N}-1}\right)^{m+1} = 1+O\left(\frac 1N\right)
\end{equation*}
when $N \gqs C/\pi$. 
Therefore
\begin{equation}\label{rty}
  \left| e^{z/N}-1 \right|^{-m-1} \ll N^{m+1}
\end{equation}
and, by increasing the implied constant if necessary, \e{rty} is valid for all $N\gqs 1$.

Clearly the numerator $e^{z/N}$  is  $O(1)$. Finally, the product $1/(e^{z/N})_N$ is bounded using Theorem \ref{coco} with $L=1$, and the factor $(1-e^z)^{-1/2}$ that appears is $O(1)$.
\end{proof}

The factor $\pl_{2-\ell}(e^{-z})$ in \e{thyb} may be expressed in different ways.
For $\Im(z)>0$ we have
\begin{equation*}
  \cot( z)=i-\frac{2i}{1-e^{2i z}} = -i -2i\sum_{j=1}^\infty e^{2j i z}
\end{equation*}
with $m$th derivative
\begin{equation*}
  \cot^{(m)}(z)=-\delta_{0,m} \cdot i -(2i)^{m+1} \pl_{-m}(e^{2i z}).
\end{equation*}
Therefore, in terms of the cotangent,
\begin{equation}  \label{cot}
  (2i)^{m+1} \pl_{-m}(e^{- z}) = -\delta_{0,m} \cdot i -\cot^{(m)}(i z/2) \qquad (m\in \Z_{\gqs 0}).
\end{equation}
For another formula,
recall the family of {\em Eulerian polynomials} beginning $ A_0(z)= A_1(z)=1$ and $A_2(z)=1+z$. In general, $A_m(z)$ has degree $m-1$ for $m\gqs 1$. Then, as in for example \cite[Sect. 8.2]{OS16c},
\begin{equation} \label{fro}
    \pl_{-m}(z) = \frac{z \cdot A_m(z)}{(1-z)^{m+1}} \qquad (m\in \Z_{\gqs 0}).
\end{equation}
The next lemma follows simply from the definition of $f_\ell(z)$ in \e{pgdef}, identity \e{fro} and \e{rty} with $N=1$.

\begin{lemma} \label{flem}
For all $z\in \C$ with $\Re(z) \lqs -\delta <0$ and  $|z|\lqs C$ we have
$f_\ell(z) = O(1)$
for an implied constant depending only on $\ell$, $\delta$ and $C$.
\end{lemma}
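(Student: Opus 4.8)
The plan is to reduce $f_\ell(z)$ to a product of manifestly bounded factors using the closed form \eqref{fro} for the polylogarithm at negative integer order. First I would dispose of the Kronecker delta term: since $|z|\lqs C$, the contribution $\delta_{1,\ell}\, z/24$ has absolute value at most $C/24$, which is $O(1)$ with an implied constant depending only on $C$ (and trivially on $\ell$).

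For the remaining term $\frac{B_{\ell+1}}{(\ell+1)!}\, z^\ell \pl_{1-\ell}(e^z)$, the rational constant $B_{\ell+1}/(\ell+1)!$ depends only on $\ell$, and $|z^\ell|\lqs C^\ell$, so everything reduces to bounding $\pl_{1-\ell}(e^z)$ uniformly on the region $\Re(z)\lqs -\delta$, $|z|\lqs C$. Here I would apply \eqref{fro} with $m=\ell-1\gqs 0$ (legitimate precisely because $\ell\gqs 1$), giving $\pl_{1-\ell}(e^z)=e^z A_{\ell-1}(e^z)/(1-e^z)^\ell$. On the region in question $|e^z|=e^{\Re(z)}\lqs e^{-\delta}<1$, so $e^z A_{\ell-1}(e^z)$ is a polynomial in $e^z$ evaluated on a bounded set and hence $O(1)$ with an implied constant depending only on $\ell$ and $\delta$. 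For the denominator I would invoke \eqref{rty} with $N=1$ and exponent $\ell$ (equivalently, note directly that $|1-e^z|\gqs 1-e^{-\delta}>0$), so $|1-e^z|^{-\ell}$ is $O(1)$ with an implied constant depending only on $\ell$ and $\delta$.

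Multiplying these bounds together yields $f_\ell(z)=O(1)$ with an implied constant depending only on $\ell$, $\delta$ and $C$, as claimed. There is essentially no obstacle here: the only points requiring a moment's care are to check that $m=\ell-1$ is a nonnegative integer so that \eqref{fro} applies, and that $1-e^z$ stays bounded away from $0$ on the region — both immediate from the hypotheses $\ell\gqs 1$ and $\Re(z)\lqs -\delta<0$.
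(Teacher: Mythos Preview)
Your proof is correct and follows exactly the route the paper indicates: use the definition \eqref{pgdef}, the closed form \eqref{fro} for $\pl_{1-\ell}$, and \eqref{rty} with $N=1$ to bound $(1-e^z)^{-\ell}$. The paper states this in a single sentence without spelling out the details you have supplied.
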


\section{The asymptotics of $A_m(1,N)$ as $N \to \infty$}

\subsection{Moving the path of integration} \label{moi}

Now we see from Theorem \ref{coco} that for large $N$ the main contribution to the integrand  in \e{qsz3} comes from the factor $\exp(N \cdot p(z))$, at least for $\Re(z)<0$. To apply the saddle-point method, the path of integration $\mathcal D'$ is moved to pass through a point $z_0$ where $p'(z_0)=0$, in such a way that $|\exp( p(z))|$ reaches its maximum only at $z=z_0$.

In solving the equation $p'(z)=0$, we may use the material in \cite[Sect. 2.3]{OS16}. In particular we need the case of \cite[Thm. 2.4]{OS16} with $d=0$ and $m=1$ (the variable $z$ in this theorem differs from ours by a factor of $2\pi i$). As shown there, employing one of the functional equations of the dilogarithm yields the identity
$$
 z^2 p'(z) = \li\left(1-e^{ z}\right) -2\pi i  \log \left(1-e^{ z}\right)
$$
for $\pi<\Im(z)<3\pi$. Let $w_0$ be a solution to $\li(w)-2\pi i  \log(w)=0$.
As reviewed in \cite[Sect. 2.3]{OS16},
$w_0$ is unique, may be computed to any accuracy with Newton's method, and is a zero of the dilogarithm on a non-principal branch. See \cite{OS16c} for further details.
Hence, according to the theorem, $z_0:= 2\pi i+\log(1-w_0)$ is a saddle-point of $p(z)$. To greater accuracy than the introduction,
\begin{equation}\label{zeros}
    w_0 \approx 0.91619782 - 0.18245890 i, \qquad z_0 \approx -1.6055276 + 7.4234262  i.
\end{equation}
It is also shown in \cite[Thm. 2.4]{OS16} that $p(z_0)=-\log(w_0)$.

\SpecialCoor
\psset{griddots=5,subgriddiv=0,gridlabels=0pt}
\psset{xunit=1.5cm, yunit=1.5cm, runit=0.8cm}
\psset{linewidth=1pt}
\psset{dotsize=3.5pt 0,dotstyle=*}
\psset{arrowscale=1.5,arrowinset=0.3}
\begin{figure}[ht]
\centering
\begin{pspicture}(-0.2,-1.3)(9.5,2.3) 

\psline[linecolor=gray](-0.2,-1)(0.4,-1)
\psline[linecolor=gray,linestyle=dotted,dotsep=1pt](0.4,-1)(0.9,-1)
\psline[linecolor=gray]{->}(0.9,-1)(3.5,-1)
\psline[linecolor=gray]{->}(2,-1.3)(2,2.3)

\multirput(1.95,-1)(0,0.5){7}{\psline[linecolor=gray](0,0)(0.1,0)}
\multirput(1.5,-1.05)(0.5,0){4}{\psline[linecolor=gray](0,0)(0,0.1)}
\psline[linecolor=gray](0,-1.05)(0,-0.95)

\psline[linecolor=gray]{->}(4.5,-1)(4.5,2)
\psline[linecolor=gray](4.5,0)(9.5,0)

\multirput(0,-0.05)(1,0){6}{\psline[linecolor=gray](4.5,0)(4.5,0.1)}
\multirput(4.45,-0.6)(0,0.2){11}{\psline[linecolor=gray](0,0)(0.1,0)}

\psline[linecolor=orange](3,-1)(3,-0.2)(1.95,-0.2)(1.784,0)
\psline(1.784,0)(1.676,0.5)
\psline[linecolor=orange](1.676,0.5)(1.352,2)(0.9,2)
\psline[linecolor=orange](0.4,2)(0,2)(0,-1)
\psline[linecolor=orange,linestyle=dotted,dotsep=1pt](0.4,2)(0.9,2)
\psline[linecolor=orange]{->}(0,0.5)(0,0.4)
\psline[linecolor=orange]{->}(1.568,1)(1.5464,1.1)
\psline[linecolor=orange]{->}(3,-0.6)(3,-0.5)

\psdot(1.74448, 0.181471)

\rput(0,-1.2){$_{-20\pi}$}
\rput(1.5,-1.2){$_{-\pi}$}
\rput(2.5,-1.2){$_{\pi}$}
\rput(3,-1.2){$_{2\pi}$}

\rput(2.27,0){$_{2\pi i}$}
\rput(2.27,1){$_{4\pi i}$}
\rput(2.27,2){$_{6\pi i}$}

\rput(3.3,-0.6){$L_1$}
\rput(2.5,-0.4){$L_2$}
\rput(1.7,-0.3){$L_3$}
\rput(1.4,0.2){$L_4$}
\rput(1.87,0.34){$z_0$}
\rput(1.2,1){$L_5$}
\rput(0.7,1.8){$L_6$}
\rput(-0.3,0.5){$L_7$}

\rput(4.89,-0.25){$L_3$}
\rput(6,-0.25){$L_4$}
\rput(7,-0.25){$L_5$}
\rput(8,-0.25){$L_6$}
\rput(9,-0.25){$L_7$}

\rput(4.2,1.4){$_{0.07}$}
\rput(4.2,1.0){$_{0.05}$}
\rput(4.2,0.6){$_{0.03}$}
\rput(4.2,0.2){$_{0.01}$}
\rput(4.13,-0.6){$_{-0.03}$}
\rput(4.13,-0.2){$_{-0.01}$}

\savedata{\mydata}[
{{4.9, -0.793941}, {4.95, -0.598411}, {5., -0.411226},
{5.05, -0.233264}, {5.1, -0.0652291}, {5.15, 0.0923735}, {5.2,
  0.239246}, {5.25, 0.375287}, {5.3, 0.500579}, {5.35,
  0.615352}, {5.4, 0.719962}, {5.45, 0.814854}, {5.5, 0.900539}}]
\dataplot[linecolor=red,linewidth=0.8pt,plotstyle=curve]{\mydata}

\psline[linecolor=blue,linestyle=dotted,dotsep=3pt](5.5,0)(5.5, 0.900539)

\savedata{\mydatab}[
{{5.5, 0.900539}, {5.55, 1.01973}, {5.6, 1.12305}, {5.65,
  1.20816}, {5.7, 1.27405}, {5.75, 1.32081}, {5.8, 1.34933}, {5.85,
  1.36103}, {5.9, 1.35764}, {5.95, 1.34106}, {6., 1.31321}, {6.05,
  1.27598}, {6.1, 1.23119}, {6.15, 1.18052}, {6.2, 1.12555}, {6.25,
  1.0677}, {6.3, 1.00829}, {6.35, 0.948486}, {6.4, 0.88933}, {6.45,
  0.831753}, {6.5, 0.776564}}]
\dataplot[linecolor=black,linewidth=0.8pt,plotstyle=curve]{\mydatab}

\psline[linecolor=blue,linestyle=dotted,dotsep=3pt](6.5,0)(6.5, 0.776564)
\psdot(5.85,0)
\rput(5.89,0.2){$z_0$}

\savedata{\mydatac}[
{{6.5, 0.776564}, {6.55, 0.631783}, {6.6, 0.527325}, {6.65,
  0.468061}, {6.7, 0.45083}, {6.75, 0.465317}, {6.8, 0.496526}, {6.85,
   0.528927}, {6.9, 0.5507}, {6.95, 0.555968}, {7., 0.544481}, {7.05,
  0.519814}, {7.1, 0.487343}, {7.15, 0.452657}, {7.2,
  0.420512}, {7.25, 0.394227}, {7.3, 0.375424}, {7.35,
  0.364056}, {7.4, 0.358714}, {7.45, 0.357142}, {7.5, 0.356875}}]
\dataplot[linecolor=red,linewidth=0.8pt,plotstyle=curve]{\mydatac}

\psline[linecolor=blue,linestyle=dotted,dotsep=3pt](7.5,0)(7.5, 0.356875)

\savedata{\mydatad}[
{{7.5, 0.356436}, {7.55, 0.569869}, {7.6, 0.720407}, {7.65,
  0.813125}, {7.7, 0.859472}, {7.75, 0.872655}, {7.8,
  0.864085}, {7.85, 0.842284}, {7.9, 0.813108}, {7.95, 0.78037}, {8.,
  0.746456}, {8.05, 0.712808}, {8.1, 0.680256}, {8.15,
  0.649245}, {8.2, 0.619983}, {8.25, 0.592532}, {8.3, 0.56687}, {8.35,
   0.542924}, {8.4, 0.520597}, {8.45, 0.499782}, {8.5, 0.480366}}]
\dataplot[linecolor=red,linewidth=0.8pt,plotstyle=curve]{\mydatad}

\psline[linecolor=blue,linestyle=dotted,dotsep=3pt](8.5,0)(8.5, 0.480366)

\savedata{\mydatae}[
{{8.5, 0.480366}, {8.55,
  0.484264}, {8.6, 0.488022}, {8.65, 0.491631}, {8.7,
  0.495082}, {8.75, 0.498369}, {8.8, 0.501483}, {8.85,
  0.504418}, {8.9, 0.507167}, {8.95, 0.509722}, {9., 0.512077}, {9.05,
   0.514227}, {9.1, 0.516166}, {9.15, 0.517889}, {9.2,
  0.519392}, {9.25, 0.52067}, {9.3, 0.521721}, {9.35, 0.522541}, {9.4,
   0.523128}, {9.45, 0.523481}, {9.5, 0.523599}}]
\dataplot[linecolor=red,linewidth=0.8pt,plotstyle=curve]{\mydatae}

\psline[linecolor=blue,linestyle=dotted,dotsep=3pt](9.5,0)(9.5, 0.523599)

\rput(7,1.7){$\Re(p(z))$}

\end{pspicture}
\caption{The path $\mathcal D''=L_1 \cup L_2 \cup \cdots \cup L_7$ and values of $\Re(p(z))$ on it}
\label{bfig}
\end{figure}
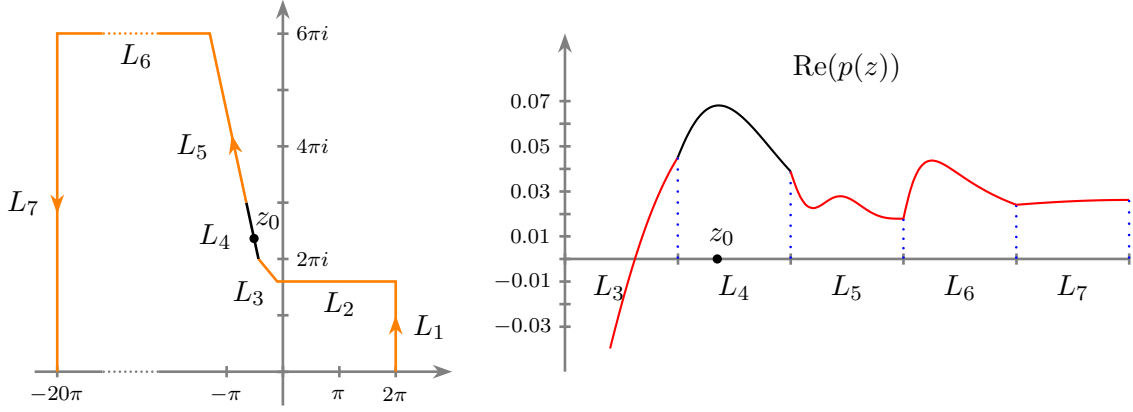

We move $\mathcal D'$ to the new path of integration $\mathcal D''$ as shown  on the left side of Figure \ref{bfig}, passing through $z_0$ and chosen so that the integrand stays  small away from $z_0$. A similar path is shown in \cite[Fig. 4]{DrGe}.
As no poles were crossed, \e{qsz3} implies
\begin{equation} \label{qaba}
    A_m(1,N) = \frac {2}N  \Re\bigg[\int_{\mathcal D''} \frac { e^{z/N}}{2\pi i (e^{z/N}-1)^{m+1} ( e^{z/N})_N} \, dz\bigg].
\end{equation}
Let $c:=\Re(z_0)/\Im(z_0)+i \approx -0.216+i$. Then the key part of this path is the line segment $L_4$ from $2\pi c$ to $3\pi c$ which contains $z_0$.
It is shown in the next section 
that the contribution from the rest of the path is $\ll e^{0.05 N}$ and so is negligible compared to the main asymptotics that turn out to be of size $\approx e^{0.068 N}$.

\subsection{Bounding the error} \label{bne}

As seen in Figure \ref{bfig}, the path of integration $\mathcal D'' \subset \C$ is made up of seven line segments. The first, $L_1$, goes vertically from $2\pi$ to $2\pi+8\pi i/5$. Next $L_2$ continues horizontally to $-\pi/10+8\pi i/5$. 
Then $L_3$ connects $-\pi/10+8\pi i/5$ to $2\pi c$, and as we saw, the segment $L_4$ is from $2\pi c$ to $3\pi c$. Lastly, $L_5$ continues to $6\pi c$, $L_6$ runs horizontally to $-20\pi+6\pi i$ and $L_7$ runs vertically down to $-20\pi$.

In this section we prove the following theorem, bounding  the integration in \e{qaba} outside of $L_4$.
\begin{theorem} \label{ban}
 Assume that  $N \gqs 1$. For an implied constant depending only on $m \in \Z$ we have
\begin{equation} \label{qb}
     \int_{\mathcal D''-L_4} \frac { e^{z/N}}{2\pi i (e^{z/N}-1)^{m+1} ( e^{z/N})_N} \, dz =  O\left( N^{m+1} e^{0.05 N}\right).
\end{equation}
\end{theorem}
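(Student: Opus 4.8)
The plan is to bound the integrand of \e{qb} pointwise on each of the six line segments $L_1,L_2,L_3,L_5,L_6,L_7$ making up $\mathcal D''-L_4$; since each segment has fixed finite length it then suffices to show the integrand is $\ll N^{m+1}e^{0.05N}$ everywhere on $\mathcal D''-L_4$. The size of the integrand is governed by $|\exp(N\cdot p(z))|=e^{N\Re(p(z))}$, and $\mathcal D''$ has been chosen, following \cite[Fig. 4]{DrGe}, so that $\Re(p(z))$ attains its maximum on $\mathcal D''$, namely $\Re(p(z_0))=-\log|w_0|\approx 0.0681$, only on $L_4$, and is $<0.05$ on every other segment. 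Figure \ref{bfig} records these values; the proof consists in making them rigorous together with matching bounds for $1/(e^{z/N})_N$.

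On $L_3,L_5,L_6,L_7$, and on the part of $L_2$ with $\Re(z)\leq-\epsilon$ for a fixed small $\epsilon>0$, we have $\Re(z)\leq-\delta<0$ and $|z|\leq C$ for fixed $\delta,C>0$, so Corollary \ref{cocor} bounds the integrand by $\ll N^{m+1}e^{N\Re(p(z))}$ and it remains only to check $\Re(p(z))<0.05$. On $L_7$ ($z=-20\pi+it$) the factor $|e^z|=e^{-20\pi}$ is negligible, so $\li(e^z)-\li(1)=-\pi^2/6+O(e^{-20\pi})$ and $|p(z)|<\pi/120<0.05$. On $L_6$ ($z=x+6\pi i$, $x<0$) one has $e^z=e^x$, hence $\li(e^z)-\li(1)=\li(e^x)-\pi^2/6$ is real and $\Re(p(z))=\bigl(\tfrac{\pi^2}{6}-\li(e^x)\bigr)\tfrac{|x|}{x^2+36\pi^2}\leq\tfrac{\pi^2/6}{12\pi}=\tfrac{\pi}{72}<0.05$, using $|x|/(x^2+a^2)\leq 1/(2a)$. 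On $L_5$ ($z=tc$, $t\in[3\pi,6\pi]$) the factor $|e^{tc}|=e^{t\,\Re(c)}$ with $\Re(c)=\Re(z_0)/\Im(z_0)\approx-0.216$ is small enough that $\li(e^{tc})$ lies within $O(|e^{tc}|^2)$ of its linear term, and inserting the explicit values of $\cos t,\sin t$ into the linear-term contribution shows $\Re(p(tc))<0.05$, the value at the endpoint $t=3\pi$ being $\approx0.039$. On $L_3$ and on the part of $L_2$ with $\Re(z)\leq-\epsilon$, where $|e^z|$ is of order $1$, one tracks $\Re(\li(e^z))$ along the segment through its power series (dominated by the first few terms since $|e^z|<1$; on the unit-circle boundary $\Re(\li(e^{i\theta}))=\tfrac{\pi^2}{6}-\tfrac{\pi\theta}{2}+\tfrac{\theta^2}{4}$ and $\Im(\li(e^{i\theta}))=\cl(\theta)$), finding $\Re(p(z))$ rising to $\Re(p(2\pi c))\approx0.0450$ at the $L_4$-endpoint of $L_3$ and staying near $\Re(p(8\pi i/5))\approx-0.18$ along $L_2$, in all cases below $0.05$.

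On $L_1$ and on the part of $L_2$ with $\Re(z)\geq\epsilon$, Corollary \ref{cocor} is unavailable since $\Re(z)\geq 0$. Here we use the identity \e{-q}, valid along $\mathcal D''$ since the path crosses no pole of the integrand, in the form $1/(e^{z/N})_N=(-1)^N e^{-z(N+1)/2}/(e^{-z/N})_N$; Theorem \ref{coco} applied to $1/(e^{-z/N})_N$ (now $\Re(-z)\leq-\epsilon<0$) bounds the integrand by $\ll N^{m+1}e^{N(\Re(p(-z))-\Re(z)/2)}$, with an implied constant growing only polynomially in $1/\Re(z)$. On $L_1$ the exponent is $\Re(p(-z))-\pi\approx-2.9$, and on the part of $L_2$ with $\Re(z)\geq\epsilon$ it is $\leq-0.1$ (tending to $\Re(p(-8\pi i/5))\approx-0.18$ as $\Re(z)\to 0^+$); since it is bounded well below $0.05$ the polynomial loss is absorbed. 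The only point reached by neither route is a fixed small disk $D_0$ about $8\pi i/5$, where $\Re(z)$ is too close to $0$ for Theorem \ref{coco} or Corollary \ref{cocor} to give a bound uniform as $z\to 8\pi i/5$. There we argue directly: from $\log\bigl(1/(e^{z/N})_N\bigr)=-\sum_{j=1}^N\log(1-e^{jz/N})$, the Euler--Maclaurin summation formula applied to $g(t)=\log(1-e^{tz})$ at $t=j/N$ — whose only singularity on $(0,1]$ is the integrable logarithmic one at $t\to 0^+$ — gives $-\sum_{j=1}^N g(j/N)=-N\int_0^1 g(t)\,dt+O(\log N)$ uniformly for $z\in D_0$, so taking real parts and using $\int_0^1\log(1-e^{tz})\,dt=(\li(1)-\li(e^z))/z=-p(z)$ yields $\log|1/(e^{z/N})_N|=N\Re(p(z))+O(\log N)$ on $D_0$; as $\Re(p(z))<-0.1$ there, the resulting bound is far below $e^{0.05N}$ even after the $|e^{z/N}-1|^{-m-1}\ll N^{m+1}$ factor.

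The main obstacle is the explicit verification that $\Re(p(z))<0.05$ on the parts of $L_3$ and $L_5$ abutting $L_4$, where the margin is only about $0.005$ and the analytically continued dilogarithm must be controlled carefully; every other segment falls to a clean elementary bound or a direct application of Corollary \ref{cocor} and Theorem \ref{coco}, and the only genuinely new ingredient beyond the tools already developed in the paper is the Euler--Maclaurin estimate valid uniformly across the imaginary axis on $D_0$.
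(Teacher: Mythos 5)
Your argument is correct, and over most of the path it coincides with the paper's own proof: on $L_3\cup L_5\cup L_6\cup L_7$ the paper likewise applies Corollary \ref{cocor} and then verifies that $\Re(p(z))<0.05$ there (it only asserts this is straightforward, pointing to Figure \ref{bfig}, so your clean bounds for $L_6,L_7$ and sketched estimates for $L_3,L_5$ are if anything more explicit), and on $L_1$ it uses exactly your reflection via \e{-q} followed by Corollary \ref{cocor} applied at $-z$. The genuine difference is the treatment of $L_2$, which crosses the imaginary axis. The paper proves Proposition \ref{jpp}, a uniform bound $\prod_{j=1}^N|e^{ij\theta/N}-1|^{-1}\ll N\exp(N\cl(\theta)/\theta)$ obtained by Euler--Maclaurin applied to $\log\left(2\sin(j\theta/2N)\right)$, and reduces all of $L_2$ to it: for $\Re(z)\gqs 0$ via the pointwise inequality $|e^{ijy/N}-e^{jx/N}|\gqs|e^{ijy/N}-1|$, and for $-\pi/10\lqs\Re(z)\lqs 0$ via \e{-q}, yielding $Q_m(z;1,N)\ll N^{m+1}$ on all of $L_2$ (Corollary \ref{polo}). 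You instead use Theorem \ref{coco} (with reflection on the right half) away from a fixed disk $D_0$ about $8\pi i/5$, and on $D_0$ compare the Riemann sum $\sum_j\log(1-e^{jz/N})$ directly with $N\int_0^1\log(1-e^{tz})\,dt=-N\,p(z)$, obtaining $\log|1/(e^{z/N})_N|=N\Re(p(z))+O(\log N)$ uniformly and concluding from $\Re(p(z))<-0.1$ on $D_0$. This works; the one point you should make explicit is the uniform lower bound $|1-e^{tz}|\gg\min(t,1)$ for $t\in(0,1]$, $z\in D_0$, which holds because $\Im(z)\lqs 8\pi/5+\epsilon<2\pi$ keeps $1-e^{tz}$ from vanishing except at $t=0$; that is exactly what makes your $O(\log N)$ error uniform across the axis. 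The paper's route has the advantage that Proposition \ref{jpp} is recycled for the general root-of-unity case (Lemma \ref{t1N}, Proposition \ref{jpp2}), while yours is more local and gives the conceptually cleaner statement that the Theorem \ref{coco} asymptotic persists, with only logarithmic loss, across the imaginary axis. Two cosmetic remarks: your parenthetical about the implied constant of Theorem \ref{coco} growing only polynomially in $1/\Re(z)$ is neither proved in the paper nor needed, since $\epsilon$ is fixed and $D_0$ covers $|\Re(z)|<\epsilon$; and the value at the crossing is $\Re(p(8\pi i/5))=\cl(8\pi/5)/(8\pi/5)\approx-0.197$ rather than $-0.18$, which only strengthens your estimate.
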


\begin{proof}
After \e{qsz} we defined
\begin{equation}\label{qdef}
  Q_m(z;\xi,N) := \frac {\xi^{-m} e^{z/N}}{2\pi i (e^{z/N}-1)^{m+1} (\xi e^{z/N})_N}
\end{equation}
so that the integrand in \e{qb} is $ Q_m(z;1,N)$.
If $z\in L_3 \cup L_5 \cup L_6 \cup L_7$ then $\Re(z)<-\delta$ and $|z|<C$ for $\delta=\pi/10$ and $C=70$.
Hence, for these values of $z$, $\delta$ and $C$, Corollary \ref{cocor} shows that
\begin{equation*}
  Q_m(z;1,N) = O\left(N^{m+1} e^{N\cdot \Re(p(z))}\right)
\end{equation*}
for an implied constant depending only on $m\in \Z$.
It is straightforward to show that   $0.05$ is an upper bound for $\Re(p(z))$ when $z\in L_3 \cup L_5 \cup L_6 \cup L_7$; see the right side of Figure \ref{bfig}. We have therefore verified \e{qb} for integration along $L_3 \cup L_5 \cup L_6 \cup L_7$.

Note that \e{-q}
implies the symmetry
\begin{equation}\label{-z}
  Q_m(-z;1,N) = (-1)^{N+m+1}\exp((N+1)z/2+(m-1)z/N) \cdot Q_m(z;1,N).
\end{equation}
Suppose $z=x+i y\in L_1$ so that $x=2\pi$ and $0\lqs y \lqs 8\pi/5$. To bound $Q_m(z;1,N)$ we first apply \e{-z} and then Corollary \ref{cocor} as follows:
\begin{align*}
  |Q_m(z;1,N)| & = |\exp(-(N+1)z/2-(m-1)z/N)| \cdot |Q_m(-z;1,N)|  \\
& \ll \exp(-\pi(N+1)-2\pi(m+1)/N)  \cdot N^{m+1} \exp(N\cdot \Re(p(-z)))\\
   & \ll N^{m+1} \exp(N(-\pi+\Re(p(-z)))\\
& < N^{m+1}\exp(-2.8 N)
\end{align*}
since it may be shown that $\Re(p(-z))< 0.3$ when $z\in L_1$.

It only remains to verify \e{qb} when integrating over $L_2$.  Corollary \ref{cocor} cannot be used in this case because $L_2$ crosses the imaginary axis, so another way of estimating $Q_m(z;1,N)$ is found in the next proposition and corollary.
Recall the Clausen function $\cl(\theta):=\sum_{k=1}^\infty k^{-2}\sin(k \theta)=\Im(\li(e^{i \theta}))$, resembling a slanted sine wave.

\begin{prop} \label{jpp}
For $2\lqs N$ and $0<\theta<2\pi$   we have
\begin{equation} \label{jp}
  \prod_{j=1}^N \left| e^{ i j \theta/N}-1\right|^{-1} < \frac{2N}{\theta \sin(\theta/2)} \exp\left( N \frac {\cl(\theta)}{\theta}\right).
\end{equation}
\end{prop}
\begin{proof}
The identities
\begin{equation} \label{bojo}
  e^{i \theta}-1  = 2i e^{i \theta/2}(e^{i \theta/2} - e^{-i \theta/2})/(2i)=  2 \sin(\theta/2) \cdot e^{i(\theta+\pi)/2}
\end{equation}
show that
\begin{equation*}
  \log \prod_{j=1}^N \left| e^{ i j \theta/N}-1\right| = \log \prod_{j=1}^N \left( 2 \sin \frac{j \theta}{2N} \right) = \sum_{j=1}^N \psi(j)
\end{equation*}
for $0<\theta<2\pi$ and
\begin{equation*}
  \psi(t):=\log\left( 2 \sin \frac{t \theta}{2N} \right).
\end{equation*}
By Euler-Maclaurin summation
\begin{equation} \label{wwc}
  \sum_{j=1}^N \psi(j) = \int_1^N \psi(t)\, dt + \frac 12(\psi(N)+\psi(1)) + \int_1^N \left(t-\lfloor t \rfloor-\frac 12\right) \psi'(t)\, dt.
\end{equation}
The Clausen function has an alternate description as a log sine integral and this implies
\begin{equation*}
  \int_1^N \psi(t)\, dt = -\frac{N}\theta\left(\cl\left(\frac \theta{N} \cdot  N\right)-\cl\left(\frac \theta{N} \cdot 1 \right) \right).
\end{equation*}
Denote the last integral in \e{wwc} as $I$ so that
\begin{equation*}
  |I|  \lqs \frac 12 \int_1^N |\psi'(t)| \, dt.
\end{equation*}
Since $\psi'(t) =\frac{\theta}{2N}\cot(\frac{\theta t}{2N})$,
we have $\psi'(t) \gqs 0$ for $t\lqs \pi N/\theta$. It can only become negative if $\theta>\pi$.
Therefore
\begin{equation} \label{ex}
  \frac 12 \int_1^N |\psi'(t)| \, dt = \frac 12(\psi(N)-\psi(1)) \qquad \text{if $0<\theta\lqs \pi$}
\end{equation}
and if $\pi \lqs \theta<2\pi$
\begin{align}
  \frac 12 \int_1^N |\psi'(t)| \, dt
& = \frac 12 \int_1^{\pi N/\theta} \psi'(t) \, dt - \frac 12 \int_{\pi N/\theta}^N \psi'(t) \, dt \notag\\
& = \psi\left(\frac{\pi N}{\theta} \right) - \frac 12\left(\psi(N)+\psi(1) \right) \label{ex2}\\
& = \log 2 - \frac 12\left(\psi(N)+\psi(1) \right). \notag
\end{align}
Therefore
\begin{equation} \label{trb}
  -\log \prod_{j=1}^N \left| e^{ i j \theta/N}-1\right| \lqs
\frac{N}\theta\left(\cl\left(\theta \right)-\cl\left(\frac \theta{N}  \right) \right)
 -\psi(1)+ \left\{
             \begin{array}{ll}
               0, & \hbox{if $\theta\lqs \pi$;} \\
               \log 2 - \psi(N), & \hbox{if $\theta\gqs \pi$.}
             \end{array}
           \right.
.
\end{equation}
If $N\gqs 2$ then $\theta/N \lqs \pi$. Therefore $\cl(\theta/N)$ is positive and may be omitted from the bound \e{trb}. Also for $N\gqs 2$ we have $\sin(\theta/(2N))> \theta/(4N)$, say, so that $-\psi(1)<\log(2N/\theta)$. The proposition follows.
\end{proof}

The above proof also gives a lower bound for the product in \e{jp}. We will apply Proposition \ref{jpp} next with $\theta=8\pi/5$ which is close to the minimum of $\cl(\theta)/\theta$.

\begin{cor} \label{polo}
For $z=x+iy$ with $-\pi/10\lqs x \lqs 2\pi$ and $ y =8\pi/5$ we have
\begin{equation} \label{mag}
  Q_m(z;1,N) = O(N^{m+1})
\end{equation}
where $N\gqs 1$ and the  implied constant depends only on $m \in \Z$.
\end{cor}
\begin{proof}
For these $z$ values, a similar argument to the one for \e{rty} shows
\begin{equation} \label{siq}
  \frac { e^{z/N}}{2\pi i (e^{z/N}-1)^{m+1}} \ll N^{m+1}.
\end{equation}
Note that
\begin{equation*}
  \left| (e^{z/N})_N \right|^{-1} = \prod_{j=1}^N \left|1-e^{ j(x+i y)/N} \right|^{-1}
= \prod_{j=1}^N \left|e^{ i j y/N} -e^{ j x/N} \right|^{-1}.
\end{equation*}
Hence, for $x \gqs 0$,
\begin{equation*}
   Q_m(z;1,N) \ll  N^{m+1} \cdot \prod_{j=1}^N \left|e^{ i j y/N} -1 \right|^{-1}.
\end{equation*}
As $y=8\pi/5$, Proposition \ref{jpp} implies
\begin{equation}\label{dfd}
  \prod_{j=1}^N \left|e^{ i j y/N} -1 \right|^{-1} \ll N e^{-0.198 N}
\end{equation}
and we have shown \e{mag} for $0\lqs x \lqs 2\pi$.

The remaining case has $-\pi/10 \lqs x \lqs 0$ and
we use  identity \e{-q}
to find
\begin{equation*}
  \left| (e^{z/N})_N \right|^{-1} = e^{-(N+1)x/2}\left| (e^{-z/N})_N \right|^{-1}.
\end{equation*}
Combining \e{siq} with this implies
\begin{equation} \label{lab}
  Q_m(z;1,N) \ll  N^{m+1} \cdot e^{\pi N/20}\left| (e^{-z/N})_N \right|^{-1}.
\end{equation}
Then
\begin{equation*}
  \left| (e^{-z/N})_N \right|^{-1} = \prod_{j=1}^N \left|1-e^{ -j(x+i y)/N} \right|^{-1}
= \prod_{j=1}^N \left|e^{ i j y/N} -e^{ -j x/N} \right|^{-1} \lqs \prod_{j=1}^N \left|e^{ i j y/N} -1 \right|^{-1}.
\end{equation*}
Bounding this by \e{dfd} means that \e{lab} implies
\begin{equation*}
  Q_m(z;1,N) \ll   N^{m+1} \cdot N e^{\pi N/20} e^{-0.198 N}< N^{m+1} \cdot N e^{-0.03N} \ll N^{m+1}
\end{equation*}
as desired.
\end{proof}

The above proof shows  that the bound in \e{mag} may be improved to $O(N^{m+1}e^{-0.02N})$, say, though we do not require it. Similar estimates are made in \cite[Sect. 4]{DrGe} with
\begin{equation*}
  \prod_{j=1}^N \left|e^{ j z/N} -1 \right|^{-1} \ll  e^{-0.17 N} \quad \text{for} \quad
-N^{-7/8}\lqs x \lqs 0, \ y=5
\end{equation*}
shown for example in \cite[Eq. (22)]{DrGe}.
In any case, Corollary \ref{polo} gives a good enough bound on $Q_m(z;1,N)$ for $z\in L_2$ to  complete the proof of Theorem \ref{ban}.
\end{proof}

\subsection{The saddle-point method}
It follows from \e{qaba} and Theorem \ref{ban} that
\begin{equation} \label{jan}
    A_m(1,N) = \frac {2}N  \Re\bigg[\int_{L_4} \frac { e^{z/N}}{2\pi i (e^{z/N}-1)^{m+1} ( e^{z/N})_N} \, dz\bigg] +O( N^{m} e^{0.05 N}).
\end{equation}
The asymptotics of the integral in \e{jan} will be found by applying Perron's saddle-point method from \cite{Pe17}. The exact form we need is given in \cite{OSper} and requires the following discussion to state it precisely.
The usual convention that the principal branch of $\log$ has arguments in $(-\pi,\pi]$ is used.
As in  \eqref{hjw} below, powers of nonzero complex numbers take the corresponding principal value $z^{\tau}:=e^{\tau\log(z)}$
for $\tau \in \C$.


\begin{assume} \label{asma}
Let $\mathcal B$ be a neighborhood of $z_0 \in \C$.  Let $\mathcal L$ be  a closed, bounded contour such as the above line segment $L_4$, with $z_0$  a point on it. Suppose $p(z)$ and $q(z)$ are holomorphic functions on a domain containing $\mathcal B \cup \mathcal L$.  We  assume $p(z)$ is not constant and  hence there must exist $\mu \in \Z_{\gqs 1}$ and $p_0 \in \C_{\neq 0}$ so that
\begin{equation}
    p(z)  =p(z_0)-p_0(z-z_0)^\mu(1-\phi(z)) \qquad  (z\in \mathcal B) \label{funp}
\end{equation}
with $\phi$  holomorphic on $\mathcal B$ and $\phi(z_0)=0$.
Define the {\em steepest-descent angles}
\begin{equation}\label{bisec}
    \theta_r := -\frac{\arg(p_0)}{\mu}+\frac{2\pi r}{\mu} \qquad (r \in \Z).
\end{equation}
We also assume that  $\mathcal B,$ $\mathcal L,$ $p(z),$  $q(z)$ and  $z_0$  are independent of  $N>0$. Finally, let $K(q)$ be a bound for $|q(z)|$ on $\mathcal B \cup \mathcal L$.
\end{assume}

\begin{theorem}[The saddle-point method of Perron] \label{sdle}
Suppose Assumptions \ref{asma} hold with $\mu$ even and
\begin{equation} \label{<}
   \Re(p(z))<\Re(p(z_0)) \quad \text{for all} \quad z \in \mathcal L, \ z\neq z_0.
\end{equation}
Let $\mathcal L$ approach $z_0$ in a sector of angular width $2\pi/\mu$ about $z_0$ with bisecting angle $\theta_{r}\pm\pi$, and  initially leave $z_0$ in a sector of the same size with bisecting angle $\theta_{r}$.
 Then, for every $S \in \Z_{\gqs 0}$, there are explicit numbers $\alpha_{2s}(p,q;z_0)$ so that
\begin{equation*} 
    \int_\mathcal L e^{N \cdot p(z)} q(z) \, dz = 2 e^{N \cdot p(z_0)} \left(\sum_{s=0}^{S-1}  \G\left(\frac{2s+1}{\mu}\right) \frac{\alpha_{2s}(p,q;z_0)   \cdot e^{2\pi i r (2s+1)/\mu}}{N^{(2s+1)/\mu}} + O\left(\frac{K(q)}{N^{(2S+1)/\mu}} \right)  \right)
\end{equation*}
as $N \to \infty$. The implied constant  is independent of $N$ and $q$.
\end{theorem}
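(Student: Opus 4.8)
The plan is to follow Perron's classical realization of the steepest-descent (saddle-point) method, reducing the integral to Gamma integrals after a local holomorphic change of variables; the fully rigorous version, with error terms uniform in $q$, is the one carried out in \cite{OSper}, so here I only outline the main steps.

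\textbf{Step 1: Localization.} By \e{<} and the compactness of $\mathcal L$ there is a fixed neighborhood $\mathcal B' \subseteq \mathcal B$ of $z_0$ and a constant $\eta>0$ with $\Re(p(z)) \lqs \Re(p(z_0))-\eta$ for $z \in \mathcal L\setminus \mathcal B'$; hence the integral over $\mathcal L\setminus \mathcal B'$ is $O\left(K(q)\, e^{N\Re(p(z_0))}e^{-\eta N}\right)$, which is absorbed into the stated error for every $S$. It remains to treat the short arc $\mathcal L' := \mathcal L\cap \mathcal B'$, which by hypothesis consists of two segments leaving $z_0$ in the prescribed sector directions.

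\textbf{Step 2: Straightening $p$ and Taylor expanding.} Shrinking $\mathcal B$ so that $|\phi|<1$ on it, \e{funp} lets us set $u := (z-z_0)(1-\phi(z))^{1/\mu}$ (principal $\mu$th root), a holomorphic function on $\mathcal B$ with $u(z_0)=0$ and $u'(z_0)=1$, so $u$ is a biholomorphism near $z_0$ with holomorphic inverse $z=z(u)$ satisfying $p(z(u))=p(z_0)-p_0 u^\mu$. Writing $g(u):=q(z(u))\,z'(u)=\sum_{n\gqs 0}c_n u^n$, holomorphic near $0$ (its coefficients expressible from the inverse $z(u)$ by a Lagrange-type formula), the integral over $\mathcal L'$ equals
\[
 e^{N p(z_0)}\int_{u(\mathcal L')} e^{-N p_0 u^\mu}\, g(u)\, du.
\]

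\textbf{Step 3: Deformation to steepest-descent rays; Gamma integrals.} Since $\Re(p_0 u^\mu)\gqs 0$ exactly when $\arg u$ lies within $\pi/(2\mu)$ of some $\theta_r$, the hypotheses on the two approach sectors permit deforming $u(\mathcal L')$, inside $\{\Re(p_0 u^\mu)\gqs 0\}$, onto the radial pair $u = t e^{i\theta_r}$ and $u=-t e^{i\theta_r}$ for $0\lqs t\lqs \epsilon$, where $\mu$ even is used so that $-e^{i\theta_r}$ again points along a steepest-descent direction. On these segments $p_0 u^\mu = |p_0| t^\mu$, so termwise
\[
 \int_{u(\mathcal L')} e^{-N p_0 u^\mu} u^n\, du = \bigl(1+(-1)^n\bigr)\,e^{i(n+1)\theta_r}\int_0^{\epsilon} e^{-N|p_0| t^\mu} t^n\, dt,
\]
the contributions of the two rays cancelling exactly for $n$ odd and doubling for $n$ even. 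Extending the $t$-integral to $[0,\infty)$ costs $O(e^{-N|p_0|\epsilon^\mu})$, and the substitution $v=N|p_0|t^\mu$ gives $\int_0^\infty e^{-N|p_0|t^\mu}t^{2s}\,dt=\frac1\mu\G\!\left(\frac{2s+1}{\mu}\right)(N|p_0|)^{-(2s+1)/\mu}$. Collecting the even terms and using $|p_0|^{-(2s+1)/\mu}e^{i(2s+1)\theta_r}=p_0^{-(2s+1)/\mu}e^{2\pi i r(2s+1)/\mu}$ (principal values, consistently with the stated conventions) yields the asserted main sum with
\[
 \alpha_{2s}(p,q;z_0) = \frac{c_{2s}}{\mu\, p_0^{(2s+1)/\mu}}.
\]

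\textbf{Step 4: Uniform error, and the main obstacle.} Truncating $g(u)=\sum_{n=0}^{2S-1}c_n u^n + R_{2S}(u)$, Proposition \ref{taylor} applied to $g$ on a fixed disk about $0$ gives $|R_{2S}(u)|\ll |u|^{2S}\sup|g|\ll K(q)\,|u|^{2S}$, the implied constant depending only on $p$ (through the bound on $z'(u)$), $S$ and the fixed disk, \emph{not} on $q$; hence the remainder contributes $\ll K(q)\int_0^\infty e^{-N|p_0|t^\mu}t^{2S}\,dt\ll K(q)N^{-(2S+1)/\mu}$, which together with the exponentially small pieces from Steps 1 and 3 gives the error term $O(K(q)N^{-(2S+1)/\mu})$ with constant independent of $q$. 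The genuinely delicate part is precisely this last point: making every estimate along the way uniform in $q$, while at the same time justifying the deformation of Step 3 rigorously — checking that $u(\mathcal L')$ can indeed be pushed onto the two steepest-descent rays inside the decay region dictated by the sector hypotheses, and that all the constants encountered depend only on $p,\mu,z_0,\mathcal B,\mathcal L$ and $S$. This is exactly the content of \cite{OSper}, to which we refer for the complete proof.
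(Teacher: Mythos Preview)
The paper does not prove Theorem \ref{sdle} at all: immediately after the statement it simply says ``Theorem \ref{sdle} is proved as Corollary 5.1 in \cite{OSper}'' and then records the explicit formula \e{hjw} for $\alpha_n$. Your proposal is therefore \emph{more} than the paper provides, not less. Your outline of Perron's method --- localize, straighten $p$ via $u=(z-z_0)(1-\phi(z))^{1/\mu}$, deform to the two steepest-descent rays (using $\mu$ even so that $\theta_r+\pi$ is again a steepest-descent angle), evaluate the resulting Gamma integrals, and bound the Taylor remainder of $g(u)=q(z(u))z'(u)$ uniformly in $q$ --- is the standard and correct skeleton, and your identification $\alpha_{2s}=c_{2s}/(\mu\, p_0^{(2s+1)/\mu})$ agrees with \e{hjw} once the coefficients $c_n$ of $g$ are expanded via Lagrange inversion in terms of the $p_s$ and $q_s$ (this is exactly the Campbell--Fr{\"o}man--Walles/Wojdylo reformulation the paper mentions). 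Since you also ultimately defer the fully rigorous version to \cite{OSper}, your treatment is consistent with, and strictly more informative than, the paper's.
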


Theorem \ref{sdle} is proved as Corollary 5.1 in \cite{OSper}.
If we write the power series for $p$ and $q$ near $z_0$ as
\begin{equation} \label{pspq}
    p(z)-p(z_0)=-\sum_{s=0}^\infty p_s (z-z_0)^{s+\mu}, \qquad q(z)=\sum_{s=0}^\infty q_s (z-z_0)^{s}
\end{equation}
then  $\alpha_{n}(p,q;z_0)$ may be given in terms of these coefficients.
This requires
 the {\em partial ordinary Bell polynomials},  which are defined with the generating function
\begin{equation} \label{pobell2}
    \left( p_1 x +p_2 x^2+ p_3 x^3+ \cdots \right)^j = \sum_{i=j}^\infty \hat{B}_{i,j}(p_1, p_2, p_3, \dots) x^i.
\end{equation}
Each $\hat{B}_{i,j}(p_1, p_2, p_3, \dots)$ is a polynomial in $p_1, p_2, \dots, p_{i-j+1}$ of homogeneous degree $j$ with positive integer coefficients.
Then we have
\begin{equation} \label{hjw}
    \alpha_n(p,q;z_0) = \frac{1}{\mu} p_0^{-(n+1)/\mu} \sum_{i=0}^n q_{n-i}\sum_{j=0}^i  \binom{-(n+1)/\mu}{j}  \hat B_{i,j}\left(\frac{p_1}{p_0},\frac{p_2}{p_0},\cdots\right).
\end{equation}
This formulation is essentially due to
 Campbell,  Fr{\"o}man and Walles, and was rediscovered also by Wojdylo. It is straightforward to derive from Perron's formulas; see \cite[Prop. 7.2]{OSper} and the references there.

\subsection{Proof of Theorem \ref{mainthm}} \label{prft}
Recall the definitions of $p(z)$ and $f_\ell(z)$ in \e{pgdef}. Put
\begin{equation} \label{gzdef}
  g(z)  := \frac 1{2\pi i}\left( \frac{- z}{2\pi (1-e^{ z})}\right)^{1/2}
\end{equation}
and let $\mathcal L = L_4$. Then by Theorem \ref{coco} and \e{jan}
\begin{multline} \label{form}
   A_m(1,N) = \frac{2 \Re}{N^{3/2}} \int_{\mathcal L} \frac { e^{z/N}}{ (e^{z/N}-1)^{m+1}}  \cdot g(z) \cdot \exp\left(N \cdot p(z)+  \sum_{\ell =1}^{L-1} \frac{f_\ell(z)}{N^{\ell}}+h_L(z,N)\right) \, dz \\
  +O( N^{m} e^{0.05 N})
\end{multline}
with $h_L(z,N) \ll N^{-L}$ and the implied constant in \e{form} depending only on $m \in \Z$.
\begin{proof}[Proof of Theorem \ref{mainthm}]
We will apply Theorem \ref{sdle} to \e{form} after getting it into the right form. With our
 $p(z)$  and saddle-point $z_0$ in \e{zeros}, we find $\mu=2$, $p_0\approx -0.013-0.0061i$ and the steepest-descent angles are $\theta_0\approx -1.347$ and $\theta_1=\pi+\theta_0$ in the notation of Assumptions \ref{asma}. By approximating $p(z)$ and its first two derivatives on $\mathcal L$ it may be shown that condition \e{<} holds. This is \cite[Corollary 5.5]{OS16}. It follows that
\begin{equation} \label{w0bnd}
  \left| \exp\left(N \cdot p(z)\right) \right| = \exp\left(N \cdot \Re(p(z))\right) \lqs \exp\left(N \cdot \Re(p(z_0))\right) = |w_0|^{-N}
\end{equation}
for all $z\in \mathcal L$, using the identity after \e{zeros}.

The bound $h_L(z,N) \ll N^{-L}$ implies
$
  \exp\left( h_L(z,N)\right) = 1+O(N^{-L})$.
Also for $z\in \mathcal L$
\begin{equation*}
  \frac{1}{N^{3/2}}  \frac { e^{z/N}}{ (e^{z/N}-1)^{m+1}}  \cdot g(z) \cdot \exp\left(  \sum_{\ell =1}^{L-1} \frac{f_\ell(z)}{N^{\ell}}\right) \ll N^{m-1/2}
\end{equation*}
by Corollary \ref{cocor} and Lemma \ref{flem},
and so $h_L(z,N)$ may be removed  from \e{form} at the expense of a total error of size
$
  O\left( |w_0|^{-N} N^{m-1/2-L} \right).
$
 Hence
\begin{equation} \label{form2}
 A_m(1,N) = \frac{2 \Re}{N^{3/2}} \int_{\mathcal L} e^{N \cdot p(z)}g(z) \frac { e^{z/N}}{ (e^{z/N}-1)^{m+1}}    \exp\left( \sum_{\ell =1}^{L-1} \frac{f_\ell(z)}{N^{\ell}}\right) \, dz
  +O\left( \frac{|w_0|^{-N}}{ N^{L-m+1/2}} \right)
\end{equation}
since  $|w_0|^{-N} \approx e^{0.068N}$ by \e{uv} implies (for an implied constant depending on $L$ and $m$) that
\begin{equation*}
  N^{m} e^{0.05 N} \ll \frac{|w_0|^{-N}}{ N^{L-m+1/2}}.
\end{equation*}

The next lemma expands the right two factors of the integrand into powers of $N$.

\begin{lemma} \label{eng}
Suppose $z\in \mathcal L$, $m\in \Z$ and $N, L \in \Z_{\gqs 1}$. Then for functions $\g_{m,j}(z)$ given in \e{vstar},
\begin{equation*}
  \left(e^{z/N} -1\right)^{-m -1} \exp\left(\frac zN+\sum_{\ell =1}^{L-1} \frac{f_\ell(z)}{N^{\ell}}\right)
 = N^{m+1}\Bigg\{\sum_{j=0}^{d-1} \frac{\g_{m,j}(z)}{N^j} + O\left(\frac 1{N^{d}}\right)\Bigg\}
\end{equation*}
when $0 \lqs d \lqs L$. The implied constant depends only on $m$ and  $L$.
\end{lemma}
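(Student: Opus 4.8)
The plan is to write the left-hand side as $N^{m+1}$ times a product of two factors, apply Proposition \ref{taylor} in the variable $1/N$ to expand each factor into a finite power series in $1/N$ with coefficients bounded on $\mathcal L$, and then multiply the truncations. Using $e^{z/N}-1=\tfrac zN\cdot\tfrac{e^{z/N}-1}{z/N}$ one has
\begin{equation*}
  \left(e^{z/N} -1\right)^{-m -1} \exp\left(\frac zN+\sum_{\ell =1}^{L-1} \frac{f_\ell(z)}{N^{\ell}}\right)
  = N^{m+1}\, z^{-m-1}\left(\frac{z/N}{e^{z/N}-1}\right)^{m+1}\exp\left(\frac zN+\sum_{\ell=1}^{L-1}\frac{f_\ell(z)}{N^\ell}\right).
\end{equation*}
The function $w\mapsto\bigl(w/(e^w-1)\bigr)^{m+1}$ is holomorphic on $|w|<2\pi$ with value $1$ at the origin: for $m\gqs-1$ this is \eqref{nor} with $\alpha=m+1$, and for $m\lqs-2$ it is the reciprocal of the entire function $\bigl((e^w-1)/w\bigr)^{-(m+1)}$, which has no zero in $|w|<2\pi$. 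Also $w\mapsto e^w$ is entire, and on the fixed compact set $\mathcal L$ the exponent $z/N+\sum_\ell f_\ell(z)/N^\ell$ is $O(1/N)$ by Lemma \ref{flem}.

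Since $|z|$ is bounded, and bounded away from $0$, on $\mathcal L$, there is an $N_0$ so that applying Proposition \ref{taylor} in the variable $1/N$ is legitimate for $N\gqs N_0$, its implied constants being uniform over $z\in\mathcal L$ because the holomorphic functions involved are uniformly bounded on the (fixed) circles of integration, using Lemma \ref{flem} for the $f_\ell$. This gives, for $N\gqs N_0$ and any $d\lqs L$,
\begin{equation*}
  \left(\frac{z/N}{e^{z/N}-1}\right)^{m+1}=\sum_{n=0}^{d-1}\frac{B_n^{(m+1)}}{n!}\frac{z^n}{N^n}+O\!\left(\frac1{N^d}\right),
  \qquad
  \exp\!\left(\frac zN+\sum_{\ell=1}^{L-1}\frac{f_\ell(z)}{N^\ell}\right)=\sum_{j=0}^{d-1}\frac{\widetilde h_j(z)}{N^j}+O\!\left(\frac1{N^d}\right),
\end{equation*}
where the $\widetilde h_j$ come from $e^w=\sum_{k<d}w^k/k!+O(|w|^d)$ followed by the multinomial theorem and are $O(1)$ on $\mathcal L$ by Lemma \ref{flem}; since only $1/N$-monomials of degree below $d\lqs L$ survive, $\widetilde h_j$ involves only $f_1,\dots,f_j$, so the coefficients do not depend on $L$. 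Multiplying the two truncations, absorbing into $O(N^{-d})$ the products meeting a remainder (legitimate since $z^{-m-1}$ and the surviving coefficients are $O(1)$ on $\mathcal L$), truncating once more at order $d-1$, and letting $\g_{m,j}(z)$ be the coefficient of $N^{-j}$ so obtained — the function \eqref{vstar} — gives the identity for $N\gqs N_0$.

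For the finitely many $N$ with $1\lqs N<N_0$ I would argue by compactness: on $L_4$ we have $\Re(z)<0$, hence $e^{z/N}\neq 1$, so the left-hand side is holomorphic — thus bounded — on $\mathcal L$, and so is $N^{m+1}\sum_{j<d}\g_{m,j}(z)/N^j$; their difference is therefore $O(1)$ on this finite set of $N$, and since $1/N^d\gqs N_0^{-d}$ there, it is $O(N^{-d})$ after enlarging the implied constant, which now depends only on $m$ and $L$. I do not expect any analytic obstacle; the only thing requiring genuine care is keeping every error estimate uniform in $N\gqs1$ and in $z\in\mathcal L$ — which is precisely why the small-$N$ range must be handled separately — together with the routine bookkeeping of the multinomial expansion.
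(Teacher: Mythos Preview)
Your proof is correct and follows essentially the same approach as the paper: factor out $N^{m+1}z^{-m-1}$, expand $\bigl((z/N)/(e^{z/N}-1)\bigr)^{m+1}$ via \eqref{nor} and the exponential via Proposition~\ref{taylor} as power series in $1/N$, multiply the truncations, and enlarge the implied constant to cover the finitely many small $N$. The paper carries out exactly these steps, writing the exponential expansion coefficients $u_j(z)$ explicitly (your $\widetilde h_j$) and defining $\g_{m,j}$ by the resulting Cauchy product, which is \eqref{vstar}.
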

\begin{proof} Recall that $\mathcal L = L_4$ is the line segment between $2\pi c$ and $3\pi c$. Hence if $z\in \mathcal L$ then $|z|<10$ and $\Re(z)<-1$.
It follows from \e{nor} and Proposition \ref{taylor} that
\begin{equation}\label{ty}
  \left( \frac{z/N}{e^{z/N}-1}\right)^{m+1} =\sum_{r=0}^{M-1} \frac{B^{(m+1)}_r}{r!} \frac {z^r}{N^r} + O\left(\frac 1{N^{M}}\right)
\end{equation}
for $N\gqs 4$ since that implies $|z|/N \lqs \pi$. The implied constant in \e{ty} depends only on $m$ and $M$. By increasing it we may ensure that \e{ty} is true for all $N\gqs 1$.

Now consider
\begin{equation} \label{fw}
  \exp\left( z\cdot w+\sum_{\ell =1}^{L-1} f_\ell(z)\cdot w^{\ell}\right) = \sum_{j=0}^\infty u_j(z) \cdot w^j,
\end{equation}
where we have replaced $1/N$ by $w$ and obtained an entire function of $w$. If $0\lqs j \lqs L-1$ then
\begin{equation} \label{uiz}
    u_{j}(z)=\sum_{m_1+2m_2+3m_3+ \dots +j m_j =j}\frac{(z+f_1(z))^{m_1}}{m_1!}\frac{f_2(z)^{m_2}}{m_2!} \cdots \frac{f_j(z)^{m_j}}{m_j!},
\end{equation}
with $u_{0}(z)=1$. Applying Proposition \ref{taylor} to \e{fw} with $R=2$, say, shows
\begin{equation} \label{ty2}
  \exp\left(\frac zN+\sum_{\ell =1}^{L-1} \frac{f_\ell(z)}{N^{\ell}}\right) = \sum_{j=0}^{d-1} \frac{u_j(z)}{N^j} + O\left(\frac 1{N^{d}}\right)
\end{equation}
for $d\lqs L$. Define
\begin{equation} \label{vstar}
    \g_{m,j}(z):=\sum_{r=0}^j B_r^{(m+1)} \frac{z^{r-m-1}}{r!}\cdot u_{j-r}(z)
\end{equation}
and the lemma follows from the product of \e{ty} and \e{ty2}.
\end{proof}

Using Lemma \ref{eng} with $d=L$ in \e{form2} shows, for an implied constant depending only on $m$ and $d$,
\begin{equation*}
   \frac{A_m(1,N)}{N^{m+1}} = \sum_{j=0}^{d-1}  \frac{2 \Re}{N^{j+3/2}} \int_{\mathcal L} e^{N \cdot p(z)} \cdot g(z) \cdot \g_{m,j}(z) \, dz
  +O\left( \frac{|w_0|^{-N}}{ N^{d+3/2}} \right).
\end{equation*}
Applying Theorem \ref{sdle} to these integrals gives
\begin{equation*}
  \int_{\mathcal L} e^{N \cdot p(z)} \cdot g(z) \cdot \g_{m,j}(z) \, dz =
-e^{N \cdot p(z_0)}\left(\sum_{s=0}^{S-1}\G\left(s+\frac 12 \right) \frac{2\alpha_{2s}(g \cdot \g_{m,j})}{N^{s+1/2}}+O\left( \frac{1}{N^{S+1/2}}\right) \right)
\end{equation*}
 since $r=1$. We have written $\alpha_{2s}(g \cdot \g_{m,j})$ instead of $\alpha_{2s}(p,g \cdot \g_{m,j};z_0)$  for brevity, since $p$ and $z_0$ are fixed.
Choose $S=d$ to give a small enough error and therefore
\begin{multline} \label{jmp}
      \frac{A_m(1,N)}{N^{m+1}}  = -\sum_{j=0}^{d-1}  \frac{2 \Re}{N^{j+3/2}}  e^{N \cdot p(z_0)} \sum_{s=0}^{d-1}\G\left(s+\frac 12 \right) \frac{2\alpha_{2s}(g \cdot \g_{m,j})}{N^{s+1/2}}
+ O\left( \frac{|w_0|^{-N}}{N^{d+3/2}}\right)\\
     = -\Re \Bigg[  w_0^{-N}
    \sum_{j=0}^{2d-2} \frac{4}{N^{j+2}}    \sum_{s=\max(0,j-d+1)}^{\min(j,d-1)} \G\left(s+\frac 12 \right)  \alpha_{2s}(g \cdot \g_{m,j-s})
    \Bigg]+ O\left( \frac{|w_0|^{-N}}{N^{d+3/2}}\right) \\
     = -\Re \Bigg[  w_0^{-N}
    \sum_{j=0}^{d-2} \frac{4}{N^{j+2}}    \sum_{s=0}^{j} \G\left(s+\frac 12 \right)  \alpha_{2s}(g \cdot \g_{m,j-s})
    \Bigg]+ O\left( \frac{|w_0|^{-N}}{N^{d+1}}\right)
\end{multline}
for implied constants depending only on $m \in \Z$ and $d \in \Z_{\gqs 2}$.
This proves \e{maineq} with
\begin{equation}\label{c01ell}
  c_{m,j} = -4\sum_{s=0}^{j} \G\left(s+\frac 12 \right)  \alpha_{2s}(p,g \cdot \g_{m,j-s};z_0).
\end{equation}

Unwinding our definitions lets us compute $c_{m,0}$ as follows. First note that for the Bell polynomials,
$\hat{B}_{j,j}(p_1, p_2, \dots) =p_1^j$. Hence $\alpha_0(p,q;z_0)=p_0^{-1/2}q(z_0)/2$ by \e{hjw} and so
\begin{align}
  c_{m,0} & = -4 \G(1/2) \alpha_0(p,g \cdot \g_{m,0};z_0) \notag\\
   & = -4\sqrt{\pi} p_0^{-1/2}g(z_0)\g_{m,0}(z_0)/2 \notag\\
& = -2\sqrt{\pi} p_0^{-1/2} g(z_0) \cdot  z_0^{-m-1}. \label{oou}
\end{align}
Note that $\frac d{dz} \li(z)=z^{-1}\pl_1(z)= -z^{-1} \log(1-z)$, implying
\begin{equation*}
  p'(z)=-\frac 1z\left( p(z)+\log(1-e^z)\right), \qquad p''(z)= -\frac 2z p'(z)+\frac{e^z}{z(1-e^z)}.
\end{equation*}
Hence, with $w_0 = 1-e^{z_0}$,
\begin{equation*}
  p_0= -\frac 12 p''(z_0)=-\frac{e^{z_0}}{2 z_0 w_0}
\end{equation*}
and finally, using definition \e{gzdef},
\begin{equation}  \label{p0q0}
    p_0^{1/2}  = -\frac{ i e^{ z_0/2}}{\sqrt{2} z_0^{1/2} w_0^{1/2}},\qquad
    g(z_0)   = -\frac{ z_0^{1/2}}{(2\pi)^{3/2} w_0^{1/2}}.
\end{equation}
Substituting \e{p0q0} into \e{oou} gives $c_{m,0}$ and  completes the proof of Theorem \ref{mainthm}.
\end{proof}

With further work, $c_{m,j}$ may be given explicitly for higher $j$ values. For example
\begin{equation}\label{cm1}
  c_{m,1} =\frac{1}{2\pi i \cdot z_0^{m-1}}\left\{ \frac{m-1}{e^{z_0/2}}+\frac{w_0}{e^{3z_0/2}}\left(
\frac 16 +\frac{m-1}{z_0}+\frac{m(m-1)}{z_0^2}
\right)\right\}
\end{equation}
and this is equivalent to \cite[(5.41)]{OS16}.

We briefly mention here the method used in \cite{OS16,OS16b} which is much more indirect, but leads to the same asymptotics. For example, with $\xi=1$ and replacing $\tau/N$ by $\tau$ in \e{fary}, we have
\begin{equation*}
   A_m(1,N) =  \int_{\mathcal C} \frac { e^{2\pi i \tau}}{(e^{2\pi i \tau}-1)^{m+1} ( e^{2\pi i \tau})_N} \, d\tau
\end{equation*}
with $\mathcal C$ encircling only the pole of order $N$ at $0$. Since the integrand has period $1$, it turns out that the sum of the residues  at all the poles of the integrand in $[0,1)$ is zero. These poles occur at the Farey fractions of order $N$. A subset of simple poles is identified that make a large contribution to this zero sum and the asymptotics of these simple residues match those found in Theorem \ref{mainthm}, but with the opposite sign. It is then shown in \cite[Thm. 1.4]{OS16} that $A_m(1,N)$ plus all the coefficients corresponding to poles of order greater than $N/100$ have the asymptotics of Theorem \ref{mainthm}.

\section{The asymptotics of $A_m(\xi,N)$ as $N \to \infty$} \label{xiasy}
The  previous results are extended to expansions around any primitive $k$th root of unity $\xi$ in this section. The coefficients $A_m(\xi,N)$ are expressed by \e{qsz2} in terms of the integrals
\begin{equation} \label{grand}
\int_{\mathcal D'} \frac { e^{z/N}}{ (e^{z/N}-1)^{m+1} (\rho \cdot  e^{z/N})_N} \, dz 
\end{equation}
for $\mathcal D'$ the top half of a circle of radius less than $2\pi/k$ and with $\rho$ equaling $\xi$ and $\overline{\xi}$.

\subsection{Estimating $1/(\rho \cdot e^{z/N})_N$}
Recall the Bernoulli polynomials $B_n(\lambda)$ generated by
\begin{equation} \label{bep}
 \frac{z e^{\lambda z}}{e^z-1} =\sum_{n=0}^\infty B_n(\lambda) \frac{z^n}{n!} \qquad (|z|<2\pi).
\end{equation}
The function $p(z)$ is as before in \e{pgdef}. We make the new definitions 
\begin{equation}\label{qro}
  g_\rho(z)  := \left( \frac{-z}{2\pi(1-e^{z})}\right)^{1/2} \times \prod_{j=1}^{k-1} \left( \frac{1-\rho^{-j}e^{z}}{1-\rho^{-j}}\right)^{j/k-1/2}
\end{equation}
and for $\ell\in \Z_{\gqs 1}$,
\begin{multline}\label{frm}
  f_{\rho,\ell}(z) := \frac{(-1)^{\ell+1} ( k z)^{\ell}}{(\ell+1)!}\Bigg(\delta_{1,\ell} \frac 1{12}+
B_{\ell+1} \pl_{1-\ell}(e^{z})  \\
    +\sum_{j=1}^{k-1}B_{\ell+1}(j/k)\Big\{
\pl_{1-\ell}(\rho^{-j} e^{z}) - \pl_{1-\ell}(\rho^{-j} )\Big\}\Bigg).
\end{multline}
With this notation, the following result generalizes Theorem \ref{coco}.

\begin{theorem} \label{coco2}
Let $\rho$ be a primitive $k$th root of unity. Suppose $N\gqs 1$ and $k \mid N$. Then for all $z\in \C$ with $\Re(z) \lqs -\delta <0$ and  $|z|\lqs C$ we have
\begin{equation*}
  \frac{1}{(\rho \cdot e^{z/N})_N}  =
\left( \frac{k}{N}\right)^{1/2} g_\rho(z)
\exp\left(\frac Nk \cdot p(k z) +  \sum_{\ell =1}^{L-1} \frac{f_{\rho,\ell}(z)}{N^{\ell}}+O\left(N^{-L}\right)\right)
\end{equation*}
for an implied constant depending only on $L \in \Z_{\gqs 1}$, $k$, $\delta$ and $C$.
\end{theorem}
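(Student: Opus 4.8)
The plan is to mirror the proof of Theorem~\ref{coco}, splitting the relevant series into a ``$k\mid r$'' piece that reduces directly to that theorem and a ``$k\nmid r$'' piece that is the genuinely new input. First I would take logarithms, expand each $-\log(1-\rho^j e^{jz/N})$ as a power series and interchange the two summations to get, for $\Re(z)<0$,
\begin{equation*}
  \log\bigl(1/(\rho\cdot e^{z/N})_N\bigr)=\sum_{r=1}^\infty\frac1r\sum_{j=1}^N\bigl(\rho^r e^{rz/N}\bigr)^j .
\end{equation*}
The hypothesis $k\mid N$ enters precisely here: it gives $(\rho^r)^N=1$, so the inner geometric sum equals $\rho^r e^{rz/N}(e^{rz}-1)/(\rho^r e^{rz/N}-1)$ and hence
\begin{equation*}
  \log\bigl(1/(\rho\cdot e^{z/N})_N\bigr)=\Phi_\rho(-z,N)-\chi_\rho(-z,N),
\end{equation*}
where $\Phi_\rho(-z,N):=\sum_{r\gqs1}1/\bigl(r(\rho^{-r}e^{-rz/N}-1)\bigr)$ and $\chi_\rho(-z,N):=\sum_{r\gqs1}e^{rz}/\bigl(r(\rho^{-r}e^{-rz/N}-1)\bigr)$; for $\rho=1$ these are the $\Phi,\chi$ of \e{aci}. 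Restricting $r$ to multiples of $k$ and using $\rho^r=1$, the $k\mid r$ part of $\Phi_\rho(-z,N)-\chi_\rho(-z,N)$ is exactly $\tfrac1k\log\bigl(1/(e^{kz/N})_N\bigr)$, so Theorem~\ref{coco} applied with $z$ replaced by $kz$ (legitimate since $\Re(kz)\lqs-k\delta<0$ and $|kz|\lqs kC$) contributes the main term $\tfrac Nk p(kz)$, the prefactor piece $\bigl(-kz/(2\pi N(1-e^{kz}))\bigr)^{1/(2k)}$, and the corrections $\tfrac1k\sum_\ell f_\ell(kz)/N^\ell$.

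For the $k\nmid r$ part, write $\Phi_\rho^{\flat},\chi_\rho^{\flat}$ for the corresponding sub-series. The key elementary observation, which I would isolate as a lemma, is that when $k\nmid r$ the number $\rho^{-r}$ is a $k$th root of unity different from $1$, so (examining modulus and argument separately) $|\rho^{-r}e^{-rz/N}-1|$ is bounded \emph{below} by a constant depending only on $k,\delta,C$, uniformly for $r\gqs1$ and for $z$ with $\Re(z)\lqs-\delta$, $|z|\lqs C$. Given this, $\chi_\rho^{\flat}$ carries the extra decay $|e^{rz}|\lqs e^{-r\delta}$ and can be handled by a direct argument modeled on the proof of Proposition~\ref{chi}: expand $1/(\rho^{-r}e^{-rz/N}-1)$ in powers of $rz/N$ up to order $L-1$, truncate the $r$-sum, estimate the tail and the truncation error using that decay together with Lemma~\ref{crx}, re-sum, and split the remaining sums according to $r\bmod k$; this expresses the answer through the polylogarithms $\pl_{1-\ell}(\rho^{-j}e^{z})$ occurring in \e{frm}.

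The series $\Phi_\rho^{\flat}$ has no such decay and needs the Mellin-transform method of Proposition~\ref{phi}. Here one computes, in the notation analogous to that proposition,
\begin{equation*}
  \mathcal M\Phi_\rho^{\flat}(z,\cdot)(s)= z^{s}k^{s-1}\G(-s)\sum_{a=1}^{k-1}\zeta(1-s,a/k)\,\pl_{-s}(\rho^{a}),
\end{equation*}
where $\zeta(\,\cdot\,,a/k)$ is the Hurwitz zeta function and $\pl_{-s}(\rho^{a})=\sum_{m\gqs1}m^{s}\rho^{am}$ is entire in $s$ since $\rho^{a}\neq1$; the transform converges for $\Re(s)<-1$. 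Mellin inversion — using standard vertical-line decay of $\G$, of $\zeta(\,\cdot\,,a/k)$ and of $\pl_{-s}(\rho^{a})$, then moving the contour to the right — picks up a double pole at $s=0$ (from $\G(-s)$ and $\zeta(1-s,a/k)$) and simple poles at $s=n\gqs1$. The double pole yields a term $-\tfrac{k-1}{2k}\log N$, using $\sum_{a=1}^{k-1}1/(1-\rho^{-a})=(k-1)/2$, together with the product $\prod_{j=1}^{k-1}\bigl((1-\rho^{-j}e^{z})/(1-\rho^{-j})\bigr)^{j/k-1/2}$ of \e{qro}; the residue at $s=n$, in which $\zeta(1-n,a/k)=-B_n(a/k)/n$ appears, supplies the Bernoulli-polynomial terms $B_{\ell+1}(j/k)\{\pl_{1-\ell}(\rho^{-j}e^{z})-\pl_{1-\ell}(\rho^{-j})\}$ of \e{frm}, the $-\pl_{1-\ell}(\rho^{-j})$ part coming from $\Phi_\rho^{\flat}$ and the $\pl_{1-\ell}(\rho^{-j}e^{z})$ part from $\chi_\rho^{\flat}$.

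Finally I would assemble the pieces: the reductions above give $\log\bigl(1/(\rho\cdot e^{z/N})_N\bigr)=\tfrac1k\log\bigl(1/(e^{kz/N})_N\bigr)+\bigl(\Phi_\rho^{\flat}-\chi_\rho^{\flat}\bigr)$, into which one inserts the expansion of Theorem~\ref{coco}. On exponentiating, the $\tfrac1{2k}\log$ prefactor from the $k\mid r$ part combines with the constant part of the $k\nmid r$ part and the identity $\prod_{j=0}^{k-1}(1-\rho^{-j}e^{z})=1-e^{kz}$ to produce exactly $(k/N)^{1/2}g_\rho(z)$; the $N^{-\ell}$ terms collect into $f_{\rho,\ell}(z)$ as in \e{frm}; and the errors (from Theorem~\ref{coco}, from Proposition~\ref{phi} taken with $c=L$, from Proposition~\ref{chi}, and from the two remainders above) combine into a single factor $\exp(O(N^{-L}))=1+O(N^{-L})$, exactly as in the proof of Theorem~\ref{coco}. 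The hardest part will be the Mellin analysis of $\Phi_\rho^{\flat}$: justifying the interchange of summation and integration, obtaining the vertical-line bounds for the Hurwitz zeta and for the Lerch-type $\pl_{-s}(\rho^{a})$ needed for inversion, and — most delicately — evaluating the residue at the double pole $s=0$ so that it reproduces the product factor of \e{qro}. One could instead avoid Mellin transforms by applying Euler--Maclaurin summation to each residue-class sum $\sum_{n\gqs0}1/\bigl((a+nk)(1-\rho^{-a}e^{-(a+nk)z/N})\bigr)$, in the spirit of the proof of Proposition~\ref{jpp}, the $\log N$ term and the product of \e{qro} then emerging from the behaviour of the main integral near its lower endpoint; either way this is where the real work lies, the remaining bookkeeping in the assembly step being routine.
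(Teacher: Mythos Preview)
Your approach is correct but organized differently from the paper's. The paper does not split by $r\bmod k$; instead it splits the original product index $j$ into residue classes $j\equiv -a\bmod k$, obtaining $k$ pairs of series $\Phi_a,\chi_a$ (for $0\lqs a\lqs k-1$). With that decomposition the Mellin transform of $\Phi_a$ comes out as $(kz)^s\G(-s)\,\zeta(-s,1-a/k)\,\pl_{1-s}(\rho^{-a})$, in which $\pl_{1-s}(\rho^{-a})$ is entire and $\zeta(-s,1-a/k)$ has its pole at $s=-1$, so for $a\gqs1$ every pole in the half-plane $\Re(s)\gqs -1$ is \emph{simple}. The Bernoulli-polynomial coefficients $B_{\ell+1}(j/k)$ in $f_{\rho,\ell}$ then drop out immediately from $\zeta(-n,1-a/k)=-B_{n+1}(1-a/k)/(n+1)$, and the leading term $\tfrac{N}{k}p(kz)$ is obtained at the end from the dilogarithm distribution identity $\sum_{j=0}^{k-1}\li(\rho^j w)=\tfrac1k\li(w^k)$.

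Your $r\bmod k$ split trades these features: you get $\tfrac{N}{k}p(kz)$ for free from Theorem~\ref{coco} (no distribution identity needed), but in exchange your $\Phi_\rho^\flat$ transform has the double pole at $s=0$ that you flagged, and your $\chi_\rho^\flat$ expansion produces coefficients $\beta_{m+1}(\rho^{-a})/(m+1)!$ indexed by $a=r\bmod k$ rather than the $B_{\ell+1}(j/k)$ indexed by $j$. Reconciling these with the explicit $f_{\rho,\ell}$ of \e{frm} then requires a further discrete Fourier step (orthogonality of $k$th roots of unity together with the Apostol--Bernoulli identity $\beta_m(\xi)=k^{m-1}\sum_j B_m(j/k)\xi^j$), so the ``routine bookkeeping'' in your assembly stage is a bit heavier than you suggest. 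Both routes are valid; the paper's avoids the double pole and delivers the $B_{\ell+1}(j/k)$ directly, at the cost of one extra identity for the main term.
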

\begin{proof}
As in the beginning of the proof of Theorem \ref{coco}, for $\Re(z)<0$,
\begin{align*}
  \log\left(1/(\rho \cdot e^{z/N})_N \right)  & =   -  \sum_{j=1}^N \log(1- \rho^j e^{jz/N}) \\
   & = \sum_{j=1}^N \sum_{r=1}^\infty \frac{1}{r} \rho^{rj} e^{r jz/N}
 =  \sum_{a=0}^{k-1} \sum_{r=1}^\infty \frac{\rho^{-r a}}r \sum_{\substack{1\lqs j \lqs N  \\ j \equiv -a \bmod k}} e^{r jz/N}.
\end{align*}
We are assuming $k \mid N$ so that
\begin{equation*}
  \sum_{\substack{1\lqs j \lqs N  \\ j \equiv -a \bmod k}}  X^j
= X^{-a} \frac{1-X^N}{X^{-k}-1}.
\end{equation*}
Then
\begin{equation*}
   \log\left(1/(\rho \cdot e^{z/N})_N \right)
= \sum_{a=0}^{k-1} \sum_{r=1}^\infty \frac{\rho^{-r a}}r e^{-ra z/N} \frac{1-e^{r z}}{e^{-k r z/N}-1}.
\end{equation*}
For fixed $k$ and $\rho$, with $\rho$ a primitive $k$th root of unity, put
\begin{equation*}
  \Phi_a(z,N)  := \sum_{r=1}^\infty \frac{\rho^{-r a}}r  \frac{e^{ra z/N}}{e^{k r z/N}-1}, \qquad
  \chi_a(z,N)  := \sum_{r=1}^\infty \frac{\rho^{-r a}}r  \frac{e^{-rz} \cdot e^{ra z/N}}{e^{k r z/N}-1},
\end{equation*}
for $0\lqs a \lqs k-1$. Then
\begin{equation} \label{radu}
  1/(\rho \cdot e^{z/N})_N = \exp\bigg( \sum_{a=0}^{k-1} \Big\{ \Phi_a(-z,N) - \chi_a(-z,N)\Big\}\bigg).
\end{equation}

\begin{prop} \label{and}
Suppose $N$, $L \in \Z_{\gqs 1}$ and $\Re(z)<0$. Then
\begin{equation} \label{ph0}
  \Phi_0(-z,N) = -\frac{\pi^2 N}{6k z}+ \frac {k z}{24N}  +\frac 12 \log\left(\frac{- k z}{2\pi N} \right) +O\left( \frac {|z|^L}{N^L} \right)
\end{equation}
and for $1\lqs a \lqs k-1$
\begin{equation} \label{pha}
  \Phi_a(-z,N) = -\sum_{\ell=0}^{L} \frac{(-1)^\ell}{\ell!} \left(\frac{ k z}{N}\right)^{\ell-1} B_{\ell}(a/k) \cdot \pl_{2-\ell}(\rho^{-a})
 +O\left( \frac {|z|^L}{N^L} \right)
\end{equation}
with implied constants depending only on $k$ and $L$.
\end{prop}
\begin{proof}
The estimate \e{ph0} follows easily from Proposition \ref{phi} since $\Phi_0(z,N) = \Phi(k z,N)$. We may now assume $1\lqs a \lqs k-1$. Recall the Hurwitz zeta function
\begin{equation*}
  \zeta(s,q):=\sum_{n=0}^\infty \frac 1{(n+q)^s} \qquad (\Re(s)>1,\ 0<q\lqs 1).
\end{equation*}
See for example \cite[Chap. 12]{ApIntro} for all the properties of $\zeta(s,q)$ used in this proof, including the following identity,
\begin{equation*}
  \int_0^\infty \frac{u^{s-1}e^{\theta u}}{e^u-1}\, du = \G(s)\zeta(s,1-\theta) \qquad (\Re(s)>1, \ 0\lqs \theta<1).
\end{equation*}
Similarly to \e{mell}, we then obtain the Mellin transform of $\Phi_a(z,N)$ with respect to $N$ as
\begin{equation} \label{mell2}
  \mathcal{M}\Phi_a(z,\cdot)(s) = ( k z)^s \G(-s) \cdot \zeta(-s,1-a/k) \cdot \pl_{1-s}(\rho^{-a})
\end{equation}
for $\Re(s)<-1$. The Hurwitz zeta has a  continuation to all $s\in \C$ and is analytic except for  a simple pole at $s=1$ with residue $1$. By Jonqui\`ere's formula, initially for $\Re(s)<0$,
\begin{equation} \label{hf}
  \pl_{1-s}(e^{2\pi i \theta}) = \frac{\G(s)}{(2\pi)^s}\left(e^{\pi i s/2} \zeta(s,\theta)+e^{-\pi i s/2} \zeta(s,1-\theta) \right)
\end{equation}
for $0<\theta<1$. Then \e{hf} gives the continuation of its left side to $s\in \C$. The only possible poles are at $s=0$ (from $\G(s)$) and $s=1$ (from the Hurwitz zetas). The values of $\zeta(s,q)$ for $s\in \Z_{\lqs 0}$ are
\begin{equation}\label{zvs}
  \zeta(-n,q) = -\frac{B_{n+1}(q)}{n+1}
\end{equation}
for $B_m(q)$ the $m$th Bernoulli polynomial. As $B_1(q)=q-1/2=-B_1(1-q)$ we find that $\pl_{1-s}(e^{2\pi i \theta})$ does not have a pole at $s=0$. The residues at $s=1$ also cancel, implying there is not a pole at $s=1$ either and so $\pl_{1-s}(e^{2\pi i \theta})$ is an entire function of $s$ when $0<\theta<1$.

The bounds in \cite[Thm. 12.23]{ApIntro} imply that for any $K\gqs 0$
\begin{equation*}
  \zeta(s,q) \ll |\Im(s)|^{K+2} \qquad (\Re(s)\gqs -K, \ |\Im(s)|\gqs 1).
\end{equation*}
It follows from \e{hf} and Stirling's formula that $\pl_{1-s}(e^{2\pi i \theta})$ also has at most polynomial growth in $|\Im(s)|$ as $|\Im(s)|\to \infty$. Therefore, applying Stirling's formula once more to $\G(-s)$, we see that \e{mell2} has exponential decay in $|\Im(s)|$ as $|\Im(s)|\to \infty$.
Mellin inversion then yields
\begin{equation*}
  \Phi_a(z,N) = \frac{1}{2\pi i} \int_{(c)}  \G(-s) \cdot \zeta(-s,1-a/k) \cdot \pl_{1-s}(\rho^{-a}) \left( \frac{k z}N \right)^s \, ds,
\end{equation*}
for $c<-1$ and moving the line of integration right, past $\Re(s)=L$, picks up residues of the simple poles at $s=-1,0, \dots, L$.  Then \e{pha} follows where we used $B_m(q) = (-1)^m B_m(1-q)$.
\end{proof}

Note that the analytically continued $\pl_{1-s}(z)$ in \e{hf} with $|z|=1$, $z\neq 1$ agrees with the expressions \e{cot}, \e{fro} when $s$ is a positive integer.

The next result has a similar proof to Proposition \ref{chi}, using \e{bep}.


\begin{prop} \label{and2}
Suppose $N\gqs 1$, $\Re(z) \lqs -\delta <0$ and  $|z|\lqs C$. Then
\begin{equation*}
  -\chi_a(-z,N) = \sum_{\ell=0}^{L} \frac{(-1)^\ell}{\ell!} \left(\frac{ k z}{N}\right)^{\ell-1} B_{\ell}(a/k) \cdot \pl_{2-\ell}(\rho^{-a} e^{z})
+O\left( \frac {1}{N^L} \right)
\end{equation*}
for an implied constant depending only on $L \in \Z_{\gqs 1}$, $k$, $\delta$ and $C$.
\end{prop}

For the sum on the right of \e{radu}, Propositions \ref{and} and \ref{and2} imply
\begin{multline} \label{ge}
  \Phi_0(-z,N)- \chi_0(-z,N) = \frac{N}{kz}\left(\li(e^z)-\li(1) \right) +\frac 12 \log\left(\frac{- k z}{2\pi N} \right) -\frac 12 \log\left(1-e^z \right) \\
  + \frac {k z}{24N}
+\sum_{\ell=2}^{L} \frac{1}{\ell!} \left(\frac{ k z}{N}\right)^{\ell-1} B_{\ell} \cdot \pl_{2-\ell}( e^{z})  +O\left( \frac {1}{N^L} \right)
\end{multline}
and
\begin{multline} \label{ge2}
  \sum_{a=1}^{k-1} \Big\{ \Phi_a(-z,N) - \chi_a(-z,N)\Big\} \\
= \sum_{\ell=0}^{L} \frac{(-1)^\ell}{\ell!} \left(\frac{ k z}{N}\right)^{\ell-1}
 \sum_{a=1}^{k-1}  B_{\ell}(a/k) \Big\{\pl_{2-\ell}(\rho^{-a} e^{z}) - \pl_{2-\ell}(\rho^{-a}) \Big\}
  +O\left( \frac {1}{N^L} \right).
\end{multline}
The coefficient of $N/k$ in \e{ge}, \e{ge2} is
\begin{equation} \label{hnj}
  \frac 1z \sum_{a=0}^{k-1}   \Big\{\pl_{2}(\rho^{-a} e^{z}) - \pl_{2}(\rho^{-a}) \Big\}
\end{equation}
and may be simplified  as follows.
If $\omega^k=1$ for a positive integer $k$ then
\begin{equation*}
  1+\omega+\omega^2+ \cdots + \omega^{k-1} =   \begin{cases}
                                                 k, & \hbox{if $\omega =1$;} \\
                                                 0, & \hbox{if $\omega \neq 1$.}
                                               \end{cases}
\end{equation*}
For $\rho$ a primitive $k$th root of unity we infer that
\begin{equation} \label{licl}
  \sum_{j=0}^{k-1} \li(\rho^j \cdot z)  = \frac 1k \li(z^k), \qquad \quad
  \sum_{j=0}^{k-1} \cl\left(\theta+ \frac{2\pi j}k\right)  = \frac 1k \cl(k \theta).
\end{equation}
The first identity in  \e{licl} is the well-known {\em distribution property} of the dilogarithm \cite[p. 9]{Zag07} and shows that \e{hnj} equals $p(kz)$. The second identity in \e{licl} for the Clausen function will be required later; it is a consequence of the first because $\cl(\theta) = \Im(\li (e^{i \theta}))$.

Next, recall that $\pl_1(z)=-\log(1-z)$ and $B_1(z)=z-1/2$. Hence the log terms of \e{ge}, \e{ge2} combine to give $\sqrt{k/N} g_\rho(z)$ after exponentiation. Finally, gathering the coefficients of negative powers of $N$ yields \e{frm}. This completes the proof of Theorem \ref{coco2}
\end{proof}

\subsection{Moving the path of integration and bounding the error}
Let $r_k(z):=p(kz)/k$. Theorem \ref{coco2} shows that the largest factor of the integrand \e{grand} is $\exp(N \cdot r_k(z))$. Then $r_k'(z)=0$ for the saddle-point $z=z_0/k$ and we move the path of integration to $\mathcal D''$ scaled by a factor $1/k$. The path $\mathcal D''$ was used in sections \ref{moi}, \ref{bne} and is made up of the seven line segments $L_j$ displayed in Figure \ref{bfig}.

It is straightforward to show the next result, with a similar proof to Theorem \ref{ban}.
\begin{theorem} \label{ban2}
 Assume that  $N \gqs 1$ and $k \mid N$ with $\rho$ a primitive $k$th root of unity. Let $j \in \{1,3,5,6,7\}$. Then for an implied constant depending only on $m \in \Z$ and $k$ we have
\begin{equation} \label{qbxx}
     \int_{L_j/k}\frac { e^{z/N}}{ (e^{z/N}-1)^{m+1} (\rho \cdot  e^{z/N})_N} \, dz =  O\left( N^{m+1} e^{0.05 N/k}\right).
\end{equation}
\end{theorem}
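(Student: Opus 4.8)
The plan is to follow the proof of Theorem~\ref{ban} closely; the one new ingredient is a version of Corollary~\ref{cocor} valid for $1/(\rho\cdot e^{z/N})_N$, and one should note that the troublesome segment $L_2$, which crosses the imaginary axis, is deliberately excluded from the values of $j$ treated here. Writing $Q_m(z;\rho,N)$ as in \e{qdef}, it suffices to bound $e^{z/N}/\bigl((e^{z/N}-1)^{m+1}(\rho\cdot e^{z/N})_N\bigr)$, since $|\rho^{-m}|=1$. First I would record the analogue of Corollary~\ref{cocor}. For $\Re(z)\lqs-\delta<0$ and $|z|\lqs C$ one has $g_\rho(z)=O(1)$ (its modulus is continuous on a compact set and the factors $1-e^z$, $1-\rho^{-j}e^z$ have no zeros when $\Re(z)<0$), so Theorem~\ref{coco2} with $L=1$ gives
\begin{equation*}
  \bigl|1/(\rho\cdot e^{z/N})_N\bigr|\ll N^{-1/2}\exp\left(\tfrac Nk\,\Re(p(kz))\right);
\end{equation*}
combined with $|e^{z/N}-1|^{-m-1}\ll N^{m+1}$ from \e{rty} this yields
\begin{equation*}
  \frac{e^{z/N}}{(e^{z/N}-1)^{m+1}(\rho\cdot e^{z/N})_N}\ll N^{m+1}\exp\left(\tfrac Nk\,\Re(p(kz))\right),
\end{equation*}
for an implied constant depending only on $m$, $k$, $\delta$ and $C$.

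For $j\in\{3,5,6,7\}$ I would set $w=kz$, so $z\in L_j/k$ corresponds to $w\in L_j$. The proof of Theorem~\ref{ban} gives $\Re(w)<-\pi/10$ and $|w|<70$ on these segments, hence $\Re(z)<-\pi/(10k)$ and $|z|<70/k$, so the second displayed bound applies with $\delta=\pi/(10k)$, $C=70/k$. That same proof (see the right side of Figure~\ref{bfig}) shows $\Re(p(w))\lqs 0.05$ for $w\in L_3\cup L_5\cup L_6\cup L_7$; since $p$ is literally the same function here, this transfers verbatim. Hence the integrand is $\ll N^{m+1}e^{0.05N/k}$ on $L_j/k$, and integrating over this segment of finite length gives \e{qbxx}.

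For $j=1$ we have $\Re(z)=2\pi/k>0$, so the displayed bounds do not apply directly; instead I would invoke the symmetry \e{-q} with $q=\rho\cdot e^{z/N}$. As $\rho^{-1}$ is again a primitive $k$th root of unity and $\rho^N=1$ (because $k\mid N$), this gives $\bigl|1/(\rho\cdot e^{z/N})_N\bigr|=e^{-(N+1)\pi/k}\bigl|1/(\rho^{-1}e^{-z/N})_N\bigr|$. Now $\Re(-z)=-2\pi/k<0$ and $|-z|$ is bounded, with $-kz$ running over $-L_1$, on which $\Re(p(\cdot))<0.3$ by the proof of Theorem~\ref{ban} (stated there as $\Re(p(-z))<0.3$ for $z\in L_1$); applying the first displayed bound with $\rho$ replaced by $\rho^{-1}$ and $z$ by $-z$ gives $\bigl|1/(\rho^{-1}e^{-z/N})_N\bigr|\ll N^{-1/2}e^{0.3N/k}$. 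Together with $e^{z/N}/(e^{z/N}-1)^{m+1}\ll N^{m+1}$ (as in \e{siq}),
\begin{equation*}
  \left|\frac{e^{z/N}}{(e^{z/N}-1)^{m+1}(\rho\cdot e^{z/N})_N}\right|\ll N^{m+1}e^{-(N+1)\pi/k}N^{-1/2}e^{0.3N/k}\ll N^{m+1}\lqs N^{m+1}e^{0.05N/k},
\end{equation*}
and integrating over $L_1/k$ finishes this case.

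Given Theorem~\ref{coco2} the work is routine; the only points needing attention are checking that $g_\rho$ is $O(1)$ on the stated region, so that Theorem~\ref{coco2} reduces to the crude exponential bounds above, and confirming that the numerical facts $\Re(p)\lqs 0.05$ on $L_3\cup L_5\cup L_6\cup L_7$ and $\Re(p)<0.3$ on $-L_1$ --- established for $p(z)=(\li(e^z)-\li(1))/z$ in the proof of Theorem~\ref{ban} --- carry over unchanged, which they do because both the line segments $L_j$ and the function $p$ are identical here.
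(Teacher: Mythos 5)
Your proposal is correct and is essentially the proof the paper intends, since the paper only remarks that Theorem \ref{ban2} follows "with a similar proof to Theorem \ref{ban}": you derive the analogue of Corollary \ref{cocor} from Theorem \ref{coco2} with $L=1$ (checking $g_\rho=O(1)$), reuse the numerical bounds $\Re(p)\lqs 0.05$ on $L_3\cup L_5\cup L_6\cup L_7$ and $\Re(p(-\cdot))<0.3$ on $L_1$ after the rescaling $w=kz$, and treat $L_1$ via the symmetry \e{-q}. Applying \e{-q} directly to the factor $1/(\rho e^{z/N})_N$ (rather than to the full integrand as in \e{-z}) is exactly the natural generalization, so there is nothing further to add.
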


Proving \e{qbxx} for the segment $L_2/k$, crossing the imaginary axis, needs a slightly more elaborate version of Proposition \ref{jpp}.
The form of Euler-Maclaurin summation required is given in the next lemma.

\begin{lemma} \label{t1N}
For $t \in [1,N]$, suppose $\psi(t)$ is continuously differentiable  with $|\psi(t)|\lqs C$. Let $N\in \Z_{\gqs 1}$ be divisible by $k$. Then for any integer $m$
\begin{equation} \label{ke}
  \sum_{\substack{1\, \leqslant \, j \, \lqs \, N  \\ j \, \equiv \, m  \bmod k}} \psi(j) = \frac 1k  \int_{1}^{N} \psi(t)\, dt +E
\end{equation}
where
\begin{equation*}
  |E| < 2C+ \frac 12 \int_1^N |\psi'(t)| \, dt.
\end{equation*}
\end{lemma}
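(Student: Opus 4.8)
The plan is to deduce \eqref{ke} from the classical Euler--Maclaurin summation formula already used in \eqref{wwc}, via a linear change of variable that turns the arithmetic progression $\{j\equiv m \bmod k\}$ into a block of consecutive integers. Write $N = kL$ with $L := N/k \in \Z_{\gqs 1}$ (possible since $N$ is a positive multiple of $k$), and let $m_0$ be the unique integer with $1 \lqs m_0 \lqs k$ and $m_0 \equiv m \bmod k$. The integers $j$ with $1 \lqs j \lqs N$ and $j \equiv m \bmod k$ are then exactly $m_0 + k\ell$ for $\ell = 0, 1, \dots, L-1$, so that, setting $\phi(s) := \psi(m_0 + ks)$ --- a $C^1$ function on $[0, L-1]$ with $\phi'(s) = k\,\psi'(m_0 + ks)$ ---
\[
\sum_{\substack{1 \lqs j \lqs N \\ j \equiv m \bmod k}} \psi(j) = \sum_{\ell=0}^{L-1}\phi(\ell).
\]

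Next I would apply Euler--Maclaurin summation to $\phi$ on the integer interval $[0, L-1]$, exactly in the form of \eqref{wwc}:
\[
\sum_{\ell=0}^{L-1}\phi(\ell) = \int_0^{L-1}\phi(s)\,ds + \frac{\phi(0)+\phi(L-1)}{2} + \int_0^{L-1}\Big(s - \lfloor s\rfloor - \tfrac12\Big)\phi'(s)\,ds.
\]
The substitution $u = m_0 + ks$ converts $\int_0^{L-1}\phi(s)\,ds$ into $\tfrac1k\int_{m_0}^{N-k+m_0}\psi(u)\,du$; converts $\phi(0),\phi(L-1)$ into $\psi(m_0),\psi(N-k+m_0)$; and converts the last integral into $\int_{m_0}^{N-k+m_0}\!\big(\{(u-m_0)/k\} - \tfrac12\big)\psi'(u)\,du$, whose integrand is bounded in absolute value by $\tfrac12|\psi'(u)|$ since the fractional part satisfies $|\{x\}-\tfrac12|\lqs\tfrac12$.

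Finally I would collect the errors, using $|\psi| \lqs C$ together with $[m_0, N-k+m_0] \subseteq [1,N]$ (which holds because $m_0\gqs 1$ and $N-k+m_0\lqs N$). Since $1 \lqs m_0 \lqs k$, the set $[1,N]$ differs from $[m_0, N-k+m_0]$ only on $[1, m_0] \cup [N-k+m_0, N]$, of total length $(m_0-1) + (k-m_0) = k-1$, so $\big|\tfrac1k\int_{m_0}^{N-k+m_0}\psi - \tfrac1k\int_1^N\psi\big| \lqs \tfrac{k-1}{k}\,C < C$; the boundary term is $\lqs \tfrac12(C+C)=C$; and the last integral is $\lqs \tfrac12\int_{m_0}^{N-k+m_0}|\psi'| \lqs \tfrac12\int_1^N|\psi'(t)|\,dt$. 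Adding these bounds yields $|E| < 2C + \tfrac12\int_1^N|\psi'(t)|\,dt$, which is \eqref{ke}.

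There is essentially no obstacle here; the one mild point of care is that the change of variables naturally produces the integration window $[m_0, N-k+m_0]$ rather than the $[1,N]$ appearing in the statement, and bridging this discrepancy with the hypothesis $|\psi|\lqs C$ is precisely what forces the constant $2C$ rather than $C$. One should also note that the degenerate case $L=1$ (that is, $N=k$), where the sum reduces to the single term $\psi(m_0)$ and all three integrals above have coinciding endpoints, is easily checked to be consistent with the stated bound.
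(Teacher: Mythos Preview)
Your proof is correct and follows essentially the same approach as the paper: reparametrize the arithmetic progression as consecutive integers (your $m_0$ is the paper's $k-v$), apply Euler--Maclaurin in the form \eqref{wwc}, then adjust the resulting integration window $[m_0,N-k+m_0]=[k-v,N-v]$ to $[1,N]$ at a cost bounded by $\tfrac{k-1}{k}C<C$. The error bookkeeping and the final assembly match the paper's proof line for line.
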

\begin{proof}
Choose $v$ so that $0\lqs v \lqs k-1$ with $-v \equiv m \bmod k$ and put $\phi(r):=\psi(rk-v)$. Then the left side of \e{ke} is
\begin{align*}
   \sum_{r=1}^{N/k} \phi(r)
   & = \int_1^{N/k} \phi(t)\, dt + \frac 12(\phi(N/k)+\phi(1)) + \int_1^{N/k} \left(t-\lfloor t \rfloor-\frac 12\right) \phi'(t)\, dt\\
 & = \frac 1k \int_{k-v}^{N-v} \psi(t)\, dt + \frac 12(\psi(N-v)+\psi(k-v)) + E_1
\end{align*}
where
\begin{equation*}
  |E_1| \lqs \frac 12 \int_{k-v}^{N-v} |\psi'(t)| \, dt.
\end{equation*}
Also
\begin{equation*}
   \frac 1k \int_{1}^{N} \psi(t)\, dt  = \frac 1k \int_{k-v}^{N-v} \psi(t)\, dt +\frac 1k\left(\int_{1}^{k-v} \psi(t)\, dt + \int_{N-v}^{N} \psi(t)\, dt\right)
\end{equation*}
and
\begin{equation*}
  \frac 1k\left|\int_{1}^{k-v} \psi(t)\, dt + \int_{N-v}^{N} \psi(t)\, dt\right|  \lqs \frac{k-v-1}k C + \frac{v}k C <C.
\end{equation*}
The lemma follows.
\end{proof}


\begin{prop} \label{jpp2}
Let $\rho$ be a primitive $k$th root of unity. For $1\lqs N$, $k\mid N$ and $0<\theta <2\pi /k$   we have
\begin{equation} \label{jpb}
  \prod_{j=1}^N \left| \rho^j e^{ i j \theta /N}-1\right|^{-1} \ll N^{4k} \exp\left( \frac Nk \cdot \frac {\cl( k \theta)}{ k \theta}\right)
\end{equation}
for an implied constant depending only on $k$ and $\theta$.
\end{prop}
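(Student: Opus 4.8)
The plan is to rewrite the left-hand side of \e{jpb} as a sum split over the residue classes modulo $k$, handle each class by Euler--Maclaurin summation along an arithmetic progression, and collapse the resulting Clausen terms using the distribution property \e{licl}. Write $\rho=e^{2\pi i h/k}$ with $\gcd(h,k)=1$. As in \e{bojo},
\begin{equation*}
  \left|\rho^j e^{ij\theta/N}-1\right| = \left|e^{i(2\pi hj/k + j\theta/N)}-1\right| = 2\left|\sin\!\left(\frac{\pi hj}{k}+\frac{j\theta}{2N}\right)\right|,
\end{equation*}
so that $-\log\prod_{j=1}^N|\rho^j e^{ij\theta/N}-1| = -\sum_{j=1}^N\psi(j)$ with $\psi(t):=\log\bigl(2|\sin(\pi ht/k + t\theta/(2N))|\bigr)$. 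I would split this sum over $j\equiv a\bmod k$ for $0\lqs a\lqs k-1$. If $j\equiv a\bmod k$ then $\pi hj/k$ differs from $\pi b_a/k$ by an integer multiple of $\pi$, where $b_a:=(ha\bmod k)$, so $\psi(j)=\psi_a(j)$ for the function $\psi_a(t):=\log\bigl(2\sin(\pi b_a/k + t\theta/(2N))\bigr)$. The hypothesis $\theta<2\pi/k$ is used precisely here: it keeps $\pi b_a/k + t\theta/(2N)$ strictly inside $(0,\pi)$ for all $t\in[1,N]$, so the sine is positive and $\psi_a$ is a genuine $C^\infty$ function there.

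Applying Lemma \ref{t1N} in each class --- its proof is valid with the (possibly $N$-dependent) constant $C=\sup_{[1,N]}|\psi_a|$ --- yields $\sum_{1\lqs j\lqs N,\ j\equiv a}\psi_a(j)=\tfrac1k\int_1^N\psi_a(t)\,dt+E_a$ with $|E_a|<2\sup_{[1,N]}|\psi_a|+\tfrac12\int_1^N|\psi_a'(t)|\,dt$. Using that $-\tfrac12\cl(2u)$ is an antiderivative of $\log(2\sin u)$ on $(0,\pi)$, the substitution $u=\pi b_a/k+t\theta/(2N)$ gives $\int_1^N\psi_a(t)\,dt=-\tfrac N\theta\bigl(\cl(2\pi b_a/k+\theta)-\cl(2\pi b_a/k+\theta/N)\bigr)$. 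Since $a\mapsto b_a$ permutes $\{0,1,\dots,k-1\}$, summing over $a$ and invoking $\sum_{b=0}^{k-1}\cl(2\pi b/k+\phi)=\tfrac1k\cl(k\phi)$, the second identity in \e{licl}, produces
\begin{equation*}
  -\frac1k\sum_{a=0}^{k-1}\int_1^N\psi_a(t)\,dt = \frac Nk\cdot\frac{\cl(k\theta)}{k\theta} - \frac{N}{k^2\theta}\,\cl\!\left(\frac{k\theta}{N}\right).
\end{equation*}
Hence $\prod_{j=1}^N|\rho^j e^{ij\theta/N}-1|^{-1}$ equals $\exp\bigl(\tfrac Nk\cdot\tfrac{\cl(k\theta)}{k\theta}\bigr)$ times $\exp\bigl(-\tfrac{N}{k^2\theta}\cl(k\theta/N)-\sum_{a=0}^{k-1}E_a\bigr)$, and it only remains to bound this second factor by $\ll N^{4k}$.

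For that bound, note that $\cl(k\theta/N)\gqs0$ once $N\gqs2$ (then $k\theta/N<\pi$), so the $\cl(k\theta/N)$ term can only shrink the factor. For each $a\neq0$ the argument of $\psi_a$ stays in a fixed compact subinterval of $(0,\pi)$, so $|\psi_a|$ and $|\psi_a'|$ are $\ll_{k,\theta}1$ and $|E_a|\ll_{k,\theta}1$. For $a=0$, $\psi_0(t)=\log(2\sin(t\theta/(2N)))$ is the function already analysed in the proof of Proposition \ref{jpp}, and the bounds there give $|\psi_0(1)|\ll\log N$ and $\int_1^N|\psi_0'(t)|\,dt\ll\log N$, hence $|E_0|\ll\log N$. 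Thus $\sum_a|E_a|\ll_k\log N$, and after enlarging the implied constant to absorb finitely many small $N$ the crude bound $N^{4k}$ follows. The main obstacle I anticipate is precisely this $a=0$ class: unlike all the others, $\psi_0$ is not uniformly bounded on $[1,N]$ (it blows up like $\log N$ near $t=1$), so one must check that Euler--Maclaurin still applies there and, more importantly, that its error is genuinely only $O(\log N)$ --- a mere power of $N$ after exponentiating --- rather than large enough to compete with the exponential main term. Everything else is bookkeeping, the one substantive ingredient being the collapse via \e{licl}, which is the same distribution property already exploited in the paper.
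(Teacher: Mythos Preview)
Your proposal is correct and follows essentially the same route as the paper: split the sum into residue classes mod $k$, apply Lemma \ref{t1N} in each class, integrate to Clausen functions, and collapse via the distribution identity \e{licl}. The only difference is cosmetic --- the paper bounds $|\psi_a|$ uniformly by $C=\log N + O_{k,\theta}(1)$ for \emph{every} class (so that $k$ copies of $|E_a|<4C$ give the exponent $4k$ directly), whereas you split off the $a=0$ class; your case-split is equally valid and in fact yields a smaller exponent, so the stated bound certainly follows.
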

\begin{proof}
Write $\rho=e^{2\pi i h/k}$. As in Proposition \ref{jpp}
\begin{equation} \label{wwcu}
  \log \prod_{j=1}^N \left| e^{2\pi i j h/k +i j \theta/N}-1\right| = \sum_{j=1}^N \log  \left| 2 \sin \left(\pi \frac {j h}k +\frac{j \theta}{2N}\right) \right|.
\end{equation}
The sum over $j$ is broken into residue classes modulo $k$. Write $v' \equiv -v h \bmod k$ for $0\lqs v, v'\lqs k-1$. Then \e{wwcu} is
\begin{equation*}
  \sum_{v=0}^{k-1} \sum_{\substack{1\leqslant j \lqs N  \\ j \equiv -v \bmod k}} \log  \left( 2 \sin \left(\pi \frac{v'}k + \frac{j \theta}{2N}\right)\right)
\end{equation*}
and the absolute value was dropped since $0<\theta <2\pi /k$ implies the argument of sine is in the interval $(0,\pi)$:
\begin{equation} \label{opp}
 0< \frac{\theta}{2N} \lqs \pi \frac{v'}k + \frac{j \theta}{2N} \lqs \pi\frac {k-1}k + \frac{ \theta}{2} <\pi.
\end{equation}
Let
\begin{equation*}
  \psi(t):=\log  \left( 2 \sin \left(\pi \frac{v'}k + \frac{t \theta}{2N}\right)\right).
\end{equation*}
Then by \e{opp}, for $1\lqs t \lqs N$, we have $|\psi(t)| \lqs C$ with
\begin{equation*}
  C:=\max\left\{ \bigg| \log\Big( 2 \sin \Big( \frac{\theta}{2N} \Big)\Big)\bigg|, \
\log 2, \  \bigg| \log\Big( 2 \sin \Big( \pi\frac {k-1}k + \frac{ \theta}{2}\Big)\Big)\bigg|\right\} =\log N +O(1).
\end{equation*}

By Lemma \ref{t1N},
\begin{equation*}
  \sum_{\substack{1\leqslant j \lqs N  \\ j \equiv -v \bmod k}}  \psi(j) = \frac 1k  \int_{1}^{N} \psi(t)\, dt +E
\end{equation*}
where
\begin{equation*}
  |E| < 2C+ \frac 12 \int_1^N |\psi'(t)| \, dt <4C.
\end{equation*}
The last inequality follows as in \e{ex}, \e{ex2}, since $\psi'(t)$ is decreasing.
Next
\begin{equation*}
  \int_{1}^{N} \psi(t)\, dt = -\frac{N}{\theta}\bigg[ \cl\Big( N \cdot \frac \theta N+2\pi\frac{v'}k  \Big) - \cl\Big(1 \cdot \frac \theta N+2\pi\frac{v'}k  \Big)\bigg]
\end{equation*}
and so
\begin{multline*}
  -\log \prod_{j=1}^N \left| e^{2\pi i j h/k +i j \theta/N}-1\right| \lqs  \sum_{v=0}^{k-1}\frac{N}{ k \theta}\bigg[
\cl\Big( \theta +2\pi\frac{v'}k  \Big) - \cl\Big( \frac \theta N+2\pi\frac{v'}k  \Big)
\bigg] \\
  + 4k\log N +O(1).
\end{multline*}
The proof is completed by using \e{licl} to show this is bounded by
\begin{equation*}
  \frac{N}{ k^2 \theta}\bigg[ \cl\Big(k \theta \Big) - \cl\Big(  \frac{k \theta}N\Big) \bigg]
  + 4k\log N +O(1). \qedhere
\end{equation*}
\end{proof}

A similar proof to Corollary \ref{polo} now goes through, showing that \e{qbxx} is true for $j=2$. Gathering our results, we have demonstrated that
\begin{equation} \label{big}
  A_m(\xi,N) = \frac 1N  \int_{L_4/k} Q_m(z;\xi,N) \, dz + \frac 1N \overline{\int_{L_4/k} Q_m(z;\overline{\xi},N) \, dz} + O\left( N^{m} e^{0.05 N/k}\right).
\end{equation}

\subsection{Proof of the main theorem} \label{boh}

\begin{proof}[Proof of Theorem \ref{mainthm3x}] 
Let $\mathcal L = L_4$ as before, and our goal is to find the asymptotics of
\begin{equation}\label{afte}
 D_m(\rho,N) := \frac 1N  \int_{\mathcal L/k} Q_m(z;\rho,N) \, dz
\end{equation}
for $\rho=\xi$ or $\overline{\xi}$ as in \e{big}. By Theorem \ref{coco2}, and using reasoning similar to that before \e{form2},  we find  \e{afte} equals
\begin{equation} \label{kkn}
 \frac{\rho^{-m} k^{1/2}}{2\pi i N^{3/2}} \int_{\mathcal L/k} e^{N \cdot r_k(z)}g_\rho(z) \frac { e^{z/N}}{ (e^{z/N}-1)^{m+1}}    \exp\left( \sum_{\ell =1}^{L-1} \frac{f_{\rho,\ell}(z)}{N^{\ell}}\right) \, dz
  +O\left( \frac{|w_0|^{-N/k}}{ N^{L-m+1/2}} \right)
\end{equation}
when $k$ divides $N$. Expanding the right two factors of the integrand, as in Lemma \ref{eng}, shows
for an implied constant depending only on $m$, $k$ and $d$,
\begin{equation} \label{cal}
   \frac{D_m(\rho,N)}{N^{m+1}} =  \frac{\rho^{-m}}{2\pi i } \sum_{j=0}^{d-1}  \frac{k^{1/2}}{N^{j+3/2}} \int_{\mathcal L/k} e^{N \cdot r_k(z)} \cdot g_\rho(z) \cdot \g_{\rho,m,j}(z) \, dz
  +O\left( \frac{|w_0|^{-N/k}}{ N^{d+3/2}} \right).
\end{equation}
Applying Theorem \ref{sdle} and simplifying, as in \e{jmp}, produces
\begin{equation*}
   \frac{D_m(\rho,N)}{N^{m+1}} =  -  w_0^{-N/k} \frac{\rho^{-m}}{2\pi i }
    \sum_{j=0}^{d-2} \frac{2k^{1/2}}{N^{j+2}}    \sum_{s=0}^{j} \G\Big(s+\frac 12 \Big)  \alpha_{2s}(r_k,g_\rho \cdot \g_{\rho,m,j-s};z_0/k)
    + O\left( \frac{|w_0|^{-N/k}}{N^{d+1}}\right).
\end{equation*}
By setting
\begin{equation*}
  e_{m,j}(\rho,0) := -k^{1/2} \frac{\rho^{-m}}{\pi i } \sum_{s=0}^j \G\left(s+\frac 12 \right)  \alpha_{2s}(r_k,g_\rho \cdot \g_{\rho,m,j-s};z_0/k)
\end{equation*}
we have therefore proved \e{maineq3x} when $k \mid N$.

Now let $M$ be any positive integer. To find the asymptotics of $ A_m(\xi,M)$, we may use our previous work by writing $M=N-v$ for some $v$ with $0\lqs v\lqs k-1$ and $k \mid N$.
Going back to \e{qsz2}, we need to estimate
\begin{equation}\label{est}
  \frac 1M  \int_{\mathcal D'} Q_m(w;\rho,M) \, dw = \frac 1M  \int_{\mathcal D'} \frac {\xi^{-m} e^{w/M}}{2\pi i (e^{w/M}-1)^{m+1} (\rho e^{w/M})_M} \, dw
\end{equation}
for $\rho=\xi$ or $\overline{\xi}$ where $\mathcal D'$ is the top half of any circle of radius less than $2\pi/k$.
With the change of variables $w/M=z/N$, \e{est} becomes
\begin{equation*}
 \frac 1N  \int_{\mathcal D'} \frac {\xi^{-m} e^{z/N}}{2\pi i (e^{z/N}-1)^{m+1} (\rho e^{z/N})_M} \, dz \\
  = \frac 1N  \int_{\mathcal D'} Q_m(z;\rho,N) \prod_{j=M+1}^N \left(1-\rho^j e^{jz/N} \right) \, dz.
\end{equation*}
The product in the integrand is $1$ if $v=0$ and otherwise equals
\begin{equation} \label{pal}
  \prod_{r=0}^{v-1} \left(1-\rho^{-r} e^{ z} e^{- r z/N} \right) = \sum_{n=0}^\infty \frac{\varphi_{\rho,n}(z,v)}{N^n}
\end{equation}
where by a straightforward calculation 
\begin{equation} \label{dap}
  \varphi_{\rho,n}(z,v) = \sum_{j_0+j_1+\cdots +j_{v-1}=n} \kappa_{\rho,j_0}(z,0) \kappa_{\rho,j_1}(z,1) \cdots \kappa_{\rho,j_{v-1}}(z,v-1)
\end{equation}
for
\begin{equation} \label{dap2}
  \kappa_{\rho,j}(z,r) := \delta_{j,0}- \rho^{-r} e^{z}\frac{(- r z)^j}{j!}.
\end{equation}
Set $\varphi_{\rho,n}(z,0):=\delta_{n,0}$   to obtain a valid  formula for $v=0$ also.
All our previous work now goes through, as long as $\g_{\rho,m,j}(z)$ in \e{cal}  is replaced by
\begin{equation} \label{dap3}
  \g^*_{\rho,m,j}(z,v):= \sum_{n=0}^j \g_{\rho,m,n}(z) \cdot \varphi_{\rho,j-n}(z,v).
\end{equation}
Hence
\begin{equation} \label{cal2}
   \frac{D_m(\rho,M)}{N^{m+1}} =  \frac{\rho^{-m}}{2\pi i } \sum_{j=0}^{d-1}  \frac{k^{1/2}}{N^{j+3/2}} \int_{\mathcal L/k} e^{N \cdot r_k(z)} \cdot g_\rho(z) \cdot \g^*_{\rho,m,j}(z,v) \, dz
  +O\left( \frac{|w_0|^{-N/k}}{ N^{d+3/2}} \right)
\end{equation}
and by Theorem \ref{sdle},
\begin{equation} \label{cal3}
   D_m(\rho,M) =  w_0^{-N/k}\sum_{j=0}^{d-1}\frac{e^*_{m,j}(\rho,M_k)}{N^{j+1-m}}
+
O\left(\frac{|w_0|^{-N/k}}{N^{d+1-m}}\right)
\end{equation}
for
\begin{equation} \label{cid}
  e^*_{m,j}(\rho,M_k):= -k^{1/2} \frac{\rho^{-m}}{\pi i } \sum_{s=0}^j \G\left(s+\frac 12 \right)  \alpha_{2s}(r_k,g_\rho \cdot \g^*_{\rho,m,j-s}(\cdot,v);z_0/k)
\end{equation}
with $M_k \equiv M \equiv -v \bmod k$.
The right side of \e{cal3} is expressed in powers of $M$, using $N=M+v$ and the binomial theorem, to obtain  \e{maineq3x} with
\begin{equation} \label{buzz}
  e_{m,j}(\rho,M_k) = w_0^{-v/k} \sum_{n=0}^j v^{j-n} \binom{m-1-n}{j-n} e^*_{m,n}(\rho,M_k).
\end{equation}

For the coefficient of the main term
\begin{align*}
   e_{m,0}(\rho,N_k) & = w_0^{-v/k}  e^*_{m,0}(\rho,N_k) \\
   & = - w_0^{-v/k} k^{1/2} \frac{\rho^{-m}}{\pi i } \G(1/2) \cdot \alpha_0( r_k, g_\rho \cdot \g^*_{\rho,m,0}(\cdot,v);z_0/k).
\end{align*}
Note that $p\to r_k$ and $z_0 \to z_0/k$ means the coefficients $p_s$ on the left of \e{pspq} become $p_s \cdot k^{s+1}$. Therefore
\begin{equation*}
  \alpha_0( r_k, g_\rho \cdot \g^*_{\rho,m,0}(\cdot,v);z_0/k) = \frac 12 (p_0 \cdot k)^{-1/2} g_\rho(z_0/k)
\cdot \g^*_{\rho,m,0}(z_0/k,v)
\end{equation*}
and also
\begin{equation*}
  \g^*_{\rho,m,0}(z,v)  =  \g_{\rho,m,0}(z,v) \cdot \varphi_{\rho,0}(z,v)
    = z^{-m-1} \prod_{r=0}^{v-1} \left( 1-\rho^{-r} e^{z}\right)
\end{equation*}
where we understand the above product equals $1$ if $v=0$. Altogether, with \e{p0q0}, \e{qro}
and noting that
\begin{equation*}
  \left( \frac{- z_0/k}{1-e^{z_0/k}}\right)^{1/2} = -\frac{i z_0^{1/2} k^{-1/2}}{(1-e^{ z_0/k})^{1/2}}
\end{equation*}
we find
\begin{multline} \label{cas}
  e_{m,0}(\rho,N_k) =    \frac{-z_0}{2\pi i e^{z_0/2}}
  \frac{(w_0 /k)^{1/2}}{(1-e^{ z_0/k})^{1/2} } \times \prod_{j=1}^{k-1} \left( \frac{1-\rho^{-j}e^{ z_0/k}}{1-\rho^{-j}}\right)^{j/k-1/2}\\
\times  \rho^{-m} ( z_0/k)^{-m -1}
\times w_0^{-v/k} \prod_{j=0}^{v-1} \left( 1-\rho^{-j} e^{ z_0/k}\right) .
\end{multline}
Write the last factors of \e{cas} as
$$
F(v):=w_0^{-v/k} \prod_{j=0}^{v-1} \left( 1-\rho^{-j} e^{ z_0/k}\right),
$$
and the final step is to express this in terms of $N_k$. We have $v=k-N_k$ unless $v=0$ (in which case $N_k=0$).
Note that $\prod_{j=0}^{v-1} ( 1-\rho^{-j}z) =1-z^k$. Then $w_0=1-e^{z_0}$ implies
that $F(v)$ extended to $\Z_{\gqs 0}$ is periodic: $F(v+k)=F(v)$. In particular $F(v)=F(k-N_k)$, and a short calculation then provides
\begin{equation*}
  F(v)=F(k-N_k) = w_0^{N_k/k}\prod_{j=1}^{N_k} \left( 1-\rho^{j} e^{ z_0/k}\right)^{-1}
\end{equation*}
and \e{cas2}.
The proof of  Theorem \ref{mainthm3x} is complete.
\end{proof}

\section{Numerical work} \label{nume}

\subsection{Summary of the calculation of $e_{m,j}(\rho,N_k)$}
As we have seen, the expansion of $1/(q)_N$ about a primitive $k$th root of unity $\xi$ is a Laurent series with $m$th coefficient $A_m(\xi,N)$. The asymptotics of each coefficient as $N\to \infty$ are given by Theorem \ref{mainthm3x} in terms of $e_{\ell,j}(\rho,N_k)$ for $\rho=\xi$ or $\overline{\xi}$. The calculation of these numbers  may be summarized as follows and they are readily programmed.

First define the functions  $p(z)$ in \e{pgdef} and $f_{\rho,\ell}(z)$ in  \e{frm} in terms of polylogarithms. Similarly to \e{uiz} and \e{vstar},
for $j \in \Z_{\gqs 0}$ put
\begin{equation} \label{uiz2}
    u_{\rho,j}(z):=\sum_{m_1+2m_2+3m_3+ \dots =j}\frac{(z+f_{\rho,1}(z))^{m_1}}{m_1!}\frac{f_{\rho,2}(z)^{m_2}}{m_2!} \cdots \frac{f_{\rho,j}(z)^{m_j}}{m_j!},
\end{equation}
with $u_{\rho,0}(z)=1$, and
\begin{equation} \label{vstar2}
\g_{\rho,m,j}(z):=\sum_{r=0}^j B_r^{(m+1)} \frac{z^{r-m-1}}{r!}\cdot u_{\rho,j-r}(z).
\end{equation}
Define $\varphi_{\rho,n}(z,v)$ with \e{dap},  \e{dap2} and as in  \e{dap3} put
\begin{equation*} 
  \g^*_{\rho,m,j}(z,v):= \sum_{n=0}^j \g_{\rho,m,n}(z) \cdot \varphi_{\rho,j-n}(z,v).
\end{equation*}
This lets us take into account the variation of $A_m(\xi,N)$ with $N \bmod k$ as we let $N_k \equiv -v \equiv N \bmod k$ for $0\lqs N_k \lqs k-1$ and $0\lqs v \lqs k-1$.
The product $g_\rho(z)$ is defined in \e{qro}. We set $r_k(z):=p(kz)/k$ and it has a simple saddle-point at $z_0/k$. To compute $\alpha_{2s}(r_k, g_\rho \cdot \g^*_{\rho,m,j}(\cdot,v);z_0/k)$ we use the formula \e{hjw} with $\mu=2$ and coefficients
\begin{equation*}
  p_n=-\frac{1}{(\mu+n)!}\left . \frac{d^{\mu+n} }{dz^{\mu+n}}r_k(z) \right|_{z_0/k}, \qquad q_n=\frac{1}{n!} \left . \frac{d^{n} }{dz^{n}}g_\rho(z) \cdot \g^*_{\rho,m,j}(z,v) \right|_{z_0/k}.
\end{equation*}
As in \e{cid}  set
\begin{equation*}
  e^*_{m,j}(\rho,N_k):= -k^{1/2} \frac{\rho^{-m}}{\pi i } \sum_{s=0}^j \G\left(s+\frac 12 \right)  \alpha_{2s}(r_k,g_\rho \cdot \g^*_{\rho,m,j-s}(\cdot,v);z_0/k)
\end{equation*}
and this is equal to $e_{\ell,j}(\rho,N_k)$ when $N_k=0$. Otherwise, for $N_k \neq 0$,
\begin{equation*} 
  e_{m,j}(\rho,N_k) = w_0^{-v/k} \sum_{n=0}^j v^{j-n} \binom{m-1-n}{j-n} e^*_{m,n}(\rho,N_k).
\end{equation*}

\subsection{Computing $A_m(\xi,N)$ exactly}

Recall the
 Norlund polynomials $B_n^{(\alpha)}$, with generating function \e{nor}, and the Bernoulli numbers $B_n = B_n^{(1)}$. For all $n \in \Z_{\gqs 0}$ and $\alpha \in \C$, the explicit formula
\begin{equation} \label{nor2}
  B_n^{(\alpha)} = \sum_{j=0}^n (-1)^j \binom{\alpha+n}{n-j} \binom{\alpha+j-1}{j} \binom{n+j}{j}^{-1} \stirb{n+j}{j}
\end{equation}
is proved in \cite[Eq. 15]{SrTo88}. It follows that $B_n^{(\alpha)}$ is a polynomial  of degree $n$ in $\alpha$.
Here the Stirling number $\stirb{m}{k}$   denotes the number of partitions of  $m$ elements into $k$ non-empty subsets.

We need a further generalization of the Bernoulli numbers. As described in \cite[Sect. 3]{OS15}, for example, the Apostol-Bernoulli numbers have generating function
\begin{equation}\label{apb}
\frac{z}{\rho e^z - 1} = \sum_{m=0}^\infty \beta_m(\rho) \frac{z^m}{m!} \qquad (\rho \in \C)
\end{equation}
and may be expressed in terms of Bernoulli polynomials (recall \e{bep}) with
\begin{equation*}
  \beta_m(\xi)  =  k^{m-1} \sum_{j=0}^{k-1} B_m(j/k) \cdot  \xi^{j}
\end{equation*}
for $\xi$  a  primitive $k$th root of unity. Clearly $\beta_m(1)=B_m$ for  $m \gqs 0$.

\begin{prop} \label{kop} For all $N \in \Z_{\gqs 1}$, $m \in \Z$ and primitive $k$th roots of unity $\xi$, we have
\begin{equation*} 
 A_m(\xi,N)  = \frac{(-1)^N \xi^{-m}}{ N!} \sum_{u+v+j_1+j_2+ \cdots + j_N = N+m}
    B_{v}^{(m+1)} \cdot  \beta_{j_1}(\xi) \cdot  \beta_{j_2}(\xi^2)  \cdots  \beta_{j_N}(\xi^N) \frac{1^{j_1} 2^{j_2} \cdots
N^{j_N}}{u! v! j_1 ! j_2 ! \cdots j_N!}
\end{equation*}
where the sum is over all $u,v,j_1, \dots, j_N \in \Z_{\gqs 0}$.
\end{prop}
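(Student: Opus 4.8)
The plan is to reduce the Laurent coefficient to a residue at the origin and then expand the integrand as a product of elementary power series. Recall that $A_m(\xi,N)=\res_{q=\xi}\bigl[(q-\xi)^{-m-1}(q)_N^{-1}\bigr]$. First I would substitute $q=\xi e^t$; since this is a biholomorphism near $t=0$ carrying $0$ to $\xi$, it preserves residues, and with $q-\xi=\xi(e^t-1)$ and $dq=\xi e^t\,dt$ the identity becomes
\begin{equation*}
  A_m(\xi,N)=\res_{t=0}\frac{\xi^{-m}e^t}{(e^t-1)^{m+1}\,(\xi e^t)_N}.
\end{equation*}

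Next I would expand the three factors of this integrand near $t=0$. The factor $e^t$ is $\sum_{u\gqs 0}t^u/u!$; the factor $(e^t-1)^{-m-1}=t^{-m-1}\bigl(t/(e^t-1)\bigr)^{m+1}$ is handled by \e{nor} with $\alpha=m+1$, producing the Norlund coefficients $B_v^{(m+1)}$ (read off from the polynomial \e{nor2} when $m+1\lqs 0$). For the product $(\xi e^t)_N^{-1}=\prod_{j=1}^N(1-\xi^je^{jt})^{-1}$ the key point is that each factor can be written uniformly, via the Apostol--Bernoulli generating function \e{apb} applied with $\rho=\xi^j$ and variable $jt$, as
\begin{equation*}
  \frac{1}{1-\xi^je^{jt}}=\frac{-1}{jt}\sum_{n\gqs 0}\beta_n(\xi^j)\frac{j^nt^n}{n!};
\end{equation*}
when $\xi^j\ne 1$ the apparent pole at $t=0$ is spurious because $\beta_0(\xi^j)=0$. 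Taking the product over $1\lqs j\lqs N$ and using $\prod_{j=1}^N(1/j)=1/N!$ gives $(-1)^N/(N!\,t^N)$ times an honest power series in $t$.

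Finally I would multiply out the $N+2$ resulting power series, factor out the prefactor $(-1)^N\xi^{-m}/N!$ together with the pole $t^{-(N+m+1)}$, and read off the residue as the coefficient of $t^{N+m}$ in the remaining power series. By the definition of a product of power series this coefficient is precisely the multi-index sum over $u+v+j_1+\cdots+j_N=N+m$ displayed in the statement, with an empty sum (hence value $0$) when $N+m<0$.

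I do not expect a real obstacle: the argument is a bookkeeping computation, and the one genuinely useful idea is the uniform Apostol--Bernoulli expansion of $(1-\xi^je^{jt})^{-1}$. The points needing a little care are getting the orientation right under $q=\xi e^t$ so that residues really are preserved, checking that this Laurent expansion converges on a punctured disc about $t=0$, and confirming that the identity is correct for every $m\in\Z$ --- in particular that both sides vanish when $m<-\lfloor N/k\rfloor$, which is automatic since a nonzero term would require $j_i\gqs 1$ for each of the $N-\lfloor N/k\rfloor$ indices $i$ with $\xi^i\ne 1$, forcing $N+m\gqs N-\lfloor N/k\rfloor$.
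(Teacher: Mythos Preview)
Your proposal is correct and follows essentially the same route as the paper: reduce $A_m(\xi,N)$ to a residue at the origin via the substitution $q=\xi e^t$ (the paper arrives at the same residue from the integral \e{qsz}), then expand $e^t$, $(t/(e^t-1))^{m+1}$, and each $jt/(\xi^j e^{jt}-1)$ with their respective Bernoulli/Apostol--Bernoulli generating series and collect the coefficient of $t^{N+m}$. Your additional remarks on convergence and the vanishing for $m<-\lfloor N/k\rfloor$ are correct refinements not spelled out in the paper.
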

\begin{proof}
Expressing the integral \e{qsz} in terms of the residue at zero we find
\begin{multline} \label{pop}
  A_m(\xi,N)  = \xi^{-m}  \left[ \text{coeff. of }z^{-1} \right] \frac{e^z}{(e^z-1)^{m+1}(\xi e^z)_N}\\
 = \frac{(-1)^N \xi^{-m}}{N!} \left[ \text{coeff. of }z^{N+m} \right]e^z \left(\frac{z}{e^z - 1}\right)^{m+1}
\left(\frac{z}{\xi e^z - 1}\right)\left(\frac{2z
}{\xi^2 e^{2z} - 1}\right) \cdots \left(\frac{N z}{\xi^N e^{Nz} - 1}\right).
\end{multline}
Inserting the power series coefficients completes the proof.
\end{proof}

Similar formulas to Proposition \ref{kop} are given in \cite[Prop. 3.1]{OS15}. It also  follows from Proposition \ref{kop} that $ A_m(\xi,N)$ is in the field $\Q(\xi)$.

A more efficient method to compute $A_m(\xi,N)$ may be given next based on the work in \cite{OS15}.
For fixed $k$ define
\begin{equation*}
    S_{n,r}(m,N):= \delta_{0,r}\cdot (m+1)+\sum_{\substack{1 \lqs j \lqs N , \ j \equiv r \bmod k}} j^n.
\end{equation*}

\begin{prop} \label{kop2} Let $N \in \Z_{\gqs 1}$, $m \in \Z$ and let $\xi$ be a primitive $k$th root of unity.
For $s:=\lfloor N/k \rfloor$, $M:=s+m \gqs 0$ and $N_k \equiv N \bmod k$ with $0\lqs N_k \lqs k-1$,
\begin{multline}
    A_m(\xi,N) = \frac {(-1)^{s} \xi^{-m} }{k^{2s+1} \cdot s!} \Bigg[\prod_{w=N_k+1}^{k-1} (1-\xi^w) \Bigg]
\\
    \times \sum_{1j_1+2j_2+ \cdots + M j_{M} = M}
     \frac{1}{j_1! j_2! \cdots j_M!}
     \left(-m-\frac{N(N+1)}2 - \sum_{r=0}^{k-1}  \frac{ \beta_1(\xi^r) \cdot S_{1,r}(m,N)}{1 \cdot 1!}\right)^{j_1} \\
    \times
     \left(- \sum_{r=0}^{k-1}  \frac{ \beta_2(\xi^r) \cdot S_{2,r}(m,N)}{2 \cdot 2!} \right)^{j_{2}} \cdots
     \left(- \sum_{r=0}^{k-1}  \frac{ \beta_M(\xi^r) \cdot S_{M,r}(m,N)}{M \cdot M!} \right)^{j_{M}}. \label{wavek}
\end{multline}
Note that for $M=0$ we replace the entire sum over $j_1, \dots, j_M$ in \e{wavek} by $1$.
\end{prop}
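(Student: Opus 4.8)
The plan is to start from the residue formula \e{pop} in the proof of Proposition~\ref{kop},
\[
 A_m(\xi,N)  = \xi^{-m}  \left[ \text{coeff. of }z^{-1} \right] \frac{e^z}{(e^z-1)^{m+1}(\xi e^z)_N},
\]
and to make the pole at $z=0$ completely explicit before extracting the coefficient. Write $\Psi(w):=w/(e^w-1)$, which is analytic with $\Psi(0)=1$. Splitting $(\xi e^z)_N=\prod_{j=1}^N(1-\xi^j e^{jz})$ by the residue of $j$ modulo $k$: the $s=\lfloor N/k\rfloor$ terms with $j=k\ell$ give $1-e^{k\ell z}=-(k\ell z)\,\Psi(k\ell z)^{-1}$, a simple zero at $0$, while a term with $k\nmid j$ contributes the nonzero constant $1-\xi^j$ times the analytic factor $(1-\xi^j e^{jz})/(1-\xi^j)$, which equals $1$ at $z=0$. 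Using $\prod_{w=1}^{k-1}(1-\xi^w)=k$ (evaluate $\prod_{w=1}^{k-1}(x-\xi^w)=(x^k-1)/(x-1)$ at $x=1$) gives $\prod_{k\nmid j,\,1\lqs j\lqs N}(1-\xi^j)=k^{s+1}\big/\prod_{w=N_k+1}^{k-1}(1-\xi^w)$, and collecting factors,
\[
 \frac{e^z}{(e^z-1)^{m+1}(\xi e^z)_N}
   =\frac{(-1)^{s}\prod_{w=N_k+1}^{k-1}(1-\xi^w)}{k^{2s+1}\,s!}\cdot\frac{H(z)}{z^{M+1}},
 \qquad
 H(z):=\frac{e^z\,\Psi(z)^{m+1}\prod_{\ell=1}^s\Psi(k\ell z)}{\prod_{k\nmid j,\,1\lqs j\lqs N}\dfrac{1-\xi^j e^{jz}}{1-\xi^j}},
\]
where $M=s+m$ and $H$ is holomorphic near $0$ with $H(0)=1$ (valid for every $m\in\Z$, since $\Psi(z)^{m+1}$ is unambiguous even when $m+1\lqs 0$). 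Thus $A_m(\xi,N)$ equals the displayed prefactor times $\xi^{-m}\left[\text{coeff. of }z^{M}\right]H(z)$; the prefactor already matches \e{wavek}, so it remains to compute $\left[\text{coeff. of }z^{M}\right]H(z)$.

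Next I would pass to logarithms, writing $\log H(z)=z+(m+1)\log\Psi(z)+\sum_{\ell=1}^s\log\Psi(k\ell z)-\sum_{k\nmid j}\log\frac{1-\xi^j e^{jz}}{1-\xi^j}$, and expand each factor. The clean input is the uniform identity, for every $k$th root of unity $\rho$,
\[
 -w-\sum_{n\gqs 1}\frac{\beta_n(\rho)}{n\cdot n!}\,w^n
 =\begin{cases}\log\Psi(w),&\rho=1,\\[3pt] -\log\dfrac{1-\rho e^w}{1-\rho},&\rho\neq1,\end{cases}
\]
which follows from $\frac{d}{dw}\log(1-\rho e^w)=\frac{\rho e^w}{\rho e^w-1}=1+\frac1w\cdot\frac{w}{\rho e^w-1}$, the Apostol--Bernoulli generating function \e{apb}, and $\beta_0(\rho)=\delta_{\rho,1}$. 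Substituting $w=z$, $w=k\ell z$, $w=jz$, using that $\xi^j$ depends only on $j\bmod k$ and that $\{k\ell:1\lqs\ell\lqs s\}$ is exactly the set of multiples of $k$ in $[1,N]$, one reads off $\log H(z)=\sum_{n\gqs1}c_n z^n$; for $n\gqs2$ the coefficient is $-\tfrac{\beta_n(1)}{n\cdot n!}\big((m+1)+\sum_{j\equiv0\,(k)}j^n\big)-\tfrac1{n\cdot n!}\sum_{r=1}^{k-1}\beta_n(\xi^r)\sum_{j\equiv r\,(k)}j^n$, which is exactly $c_n=-\sum_{r=0}^{k-1}\beta_n(\xi^r)S_{n,r}(m,N)/(n\cdot n!)$ once one recognizes $(m+1)+\sum_{j\equiv0\,(k)}j^n=S_{n,0}(m,N)$ and $\beta_n(1)=\beta_n(\xi^0)$.

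The one delicate point is the coefficient $c_1$: the explicit $z$ in $H$ contributes $+1$, each $\log\Psi$ or $-\log\frac{1-\xi^j e^{jz}}{1-\xi^j}$ factor contributes $-1-\beta_1(\cdot)$ from its linear term (with $\beta_1(1)=B_1=-\tfrac12$), and these, together with $\sum_{j=1}^N j=N(N+1)/2$, must be reassembled into $-m-\tfrac{N(N+1)}2-\sum_{r=0}^{k-1}\beta_1(\xi^r)S_{1,r}(m,N)$; I expect this bookkeeping and the pole extraction in the first step to be the only genuine obstacles, and a direct computation confirms both. With $\log H(z)=\sum_{n\gqs1}c_nz^n$ in hand, the proof finishes by expanding $\exp$ as $\prod_{n}\exp(c_nz^n)$ and comparing $z^M$-coefficients, which gives $\left[\text{coeff. of }z^{M}\right]H(z)=\sum_{1j_1+\cdots+Mj_M=M}c_1^{j_1}\cdots c_M^{j_M}/(j_1!\cdots j_M!)$ (only $c_1,\dots,c_M$ occur). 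Multiplying by $\xi^{-m}$ and the prefactor yields \e{wavek}; when $M=0$ one has $\left[\text{coeff. of }z^{0}\right]H(z)=H(0)=1$, which is the stated empty-sum convention.
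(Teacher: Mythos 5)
Your proposal is correct and takes essentially the same route as the paper, which starts from \e{pop} and invokes exactly the two logarithmic identities for $B_n$ and $\beta_n(\rho)$ that you derive, leaving the rest as a "straightforward exercise." You have simply filled in that exercise — the splitting of $(\xi e^z)_N$ by residue class mod $k$, the evaluation $\prod_{w=1}^{k-1}(1-\xi^w)=k$ giving the prefactor $k^{2s+1}$, the bookkeeping for $c_1$, and the exponential-of-power-series expansion — all of which checks out.
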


The proof is a straightforward exercise starting with  \e{pop} and employing the identities
\begin{align*}
  \log \left(\frac{z }{e^{z}-1} \right) & = -z-\sum_{n=1}^\infty  \frac{B_n }{n \cdot n!} z^n, \\
    \log \left( \frac{\rho -1}{\rho e^z -1} \right)  & = -z -\sum_{n=1}^\infty \frac{\beta_n(\rho)}{n \cdot n!} z^n \qquad (\rho \neq 1).
\end{align*}
Glaisher obtained similar results to Proposition \ref{kop2} for Sylvester waves as seen in Theorem 4.3 and (4.8) of \cite{OS15}.
At each stage in the computation of \e{wavek}, the result may be simplified to a degree $k-1$ polynomial in $\xi$ with rational coefficients.

Some examples of Proposition \ref{kop2} are the following.
With $k=N$, $s=1$, $m=-1$ we find $C_{1N1}(N)=-e^{2\pi i/N}/N^2$; see also \cite[Eq. (2.10)]{OS16}.
For $\xi=1$ and $k=1$, \e{wavek} reduces (for $M=N+m$) to
\begin{multline}
    A_m(1,N) = \frac {(-1)^{N}  }{N!}  \sum_{1j_1+2j_2+ \cdots + M j_{M} = M}
     \frac{1}{j_1! j_2! \cdots j_M!}
     \left(-m-\frac{N(N+1)}2 -   \frac{ B_1 \cdot S_{1,0}(m,N)}{1 \cdot 1!}\right)^{j_1} \\
    \times
     \left(-  \frac{ B_2 \cdot S_{2,0}(m,N)}{2 \cdot 2!} \right)^{j_{2}} \cdots
     \left(-   \frac{ B_M \cdot S_{M,0}(m,N)}{M \cdot M!} \right)^{j_{M}}. \label{wavek2}
\end{multline}
Tables \ref{jeb-4} and \ref{jeb-k4} give further examples of Theorems \ref{mainthm}, \ref{mainthm3x} with the exact values of the coefficients shown on the last line and computed with Proposition \ref{kop2}.

\begin{table}[ht]
\centering
\begin{tabular}{ccc}
\hline
 $r$   & Theorem \ref{mainthm} & \\
\hline
 $1$   & $1.97\textcolor{gray}{608009680605866} \times 10^{60}$ & \\
 $3$    & $1.977415\textcolor{gray}{84770913413} \times 10^{60}$ & \\
 $5$   & $1.977415482\textcolor{gray}{75273845} \times 10^{60}$ & \\
 $7$    & $1.977415482931\textcolor{gray}{52401} \times 10^{60}$  &  \\
\hline
\phantom{$A_{-4}(1,2500)$} & $ 1.97741548293140288 \times 10^{60}$ & $A_{-4}(1,2500)$\\
\hline
\end{tabular}
\caption{The approximations of Theorem \ref{mainthm} to $A_{-4}(1,2500)$.} \label{jeb-4}
\end{table}

\begin{table}[ht]
\centering
\begin{tabular}{ccc}
\hline
 $r$   & Theorem \ref{mainthm3x} & \\
\hline
 $1$   & $6.6\textcolor{gray}{99691529339419} \times 10^{17} - 2.3\textcolor{gray}{252380189830248} \times 10^{18}i$ & \\
 $3$   & $6.6511\textcolor{gray}{84519968432} \times 10^{17} - 2.31583\textcolor{gray}{37396379049} \times 10^{18}i$ & \\
 $5$   & $6.6511950\textcolor{gray}{28374644} \times 10^{17} - 2.31583667\textcolor{gray}{55589084} \times 10^{18}i$ & \\
 $7$   & $6.6511950104\textcolor{gray}{70307} \times 10^{17} - 2.315836673\textcolor{gray}{2365613} \times 10^{18}i$  &  \\
\hline
\phantom{$A_{-2}$} & $ 6.651195010459496 \times 10^{17} - 2.3158366731930319  \times 10^{18}i$ & $A_{1}(i,2501)$\\
\hline
\end{tabular}
\caption{The approximations of Theorem \ref{mainthm3x} to $A_{1}(i,2501)$.} \label{jeb-k4}
\end{table}

\section{Sylvester waves}
\begin{proof}[Proof of Theorem \ref{ma2}]
We are using the same arguments as in Section \ref{xiasy} and  may be brief.
Similarly to \e{qsz} we obtain from \e{wv}
\begin{equation} \label{wv2}
 W_k(N,\lambda N) = -\frac 1N \sum_\xi \int_{\mathcal D} \frac {\xi^{-\lambda N} e^{-\lambda z}}{2\pi i  \cdot (\xi e^{z/N})_N} \, dz
\end{equation}
where we are summing over all primitive $k$th roots of unity $\xi$ and $\mathcal D$ is a circle of radius less than $2\pi/k$. Denoting the integrand in \e{wv2} as $R_\lambda(z;\xi,N)$ we find, as in \e{qsz2} where $\mathcal D'$ is the top half of $\mathcal D$,
\begin{align*} 
 W_k(N,\lambda N) & = -\frac 1N \sum_\xi \bigg(\int_{\mathcal D'} R_\lambda(z;\xi,N) \, dz + \overline{\int_{\mathcal D'} R_\lambda(z;\overline{\xi},N) \, dz}\bigg)\\
& = -\frac 2N \Re \bigg[ \sum_\xi \int_{\mathcal D'} R_\lambda(z;\xi,N) \, dz \bigg].
\end{align*}
Our task is now to estimate
\begin{equation} \label{wv4}
 E_\lambda(\xi,N):= -\frac 2N \int_{\mathcal D'} R_\lambda(z;\xi,N) \, dz = -\frac{\xi^{-\lambda N}}{\pi i N}  \int_{\mathcal D'} \frac { e^{-\lambda z}}{(\xi e^{z/N})_N}  \, dz.
\end{equation}

By Theorem \ref{coco2}  we find that \e{wv4} equals
\begin{equation} \label{mvl}
 -\frac{\xi^{-\lambda N} k^{1/2}}{\pi i N^{3/2}} \int_{\mathcal L/k} e^{N \cdot r_k(z)} e^{-\lambda z} g_\xi(z) \exp\left( \sum_{\ell =1}^{L-1} \frac{f_{\xi,\ell}(z)}{N^{\ell}}\right) \, dz
  +O\bigg( \frac{|w_0|^{-N/k}}{ N^{L+3/2}} \bigg)
\end{equation}
when $k$ divides $N$.
Define $g_{\lambda,\xi}(z):=  e^{-\lambda z} g_\xi(z)$ and expand the exponential of the sum over $\ell$ using coefficients $\omega_{\xi,j}(z)$ which are defined as in \e{uiz2} except with $z+f_{\xi,1}(z)$ replaced by $f_{\xi,1}(z)$. Then for an implied constant depending only on $\lambda'$, $k$ and $d$,
\begin{equation} \label{mvl2}
  E_\lambda(\xi,N)  =  -\frac{\xi^{-\lambda N}}{\pi i } \sum_{j=0}^{d-1}  \frac{k^{1/2}}{N^{j+3/2}} \int_{\mathcal L/k} e^{N \cdot r_k(z)} \cdot g_{\lambda,\xi}(z) \cdot \omega_{\xi,j}(z) \, dz
  +O\left( \frac{|w_0|^{-N/k}}{ N^{d+3/2}} \right).
\end{equation}
Applying Theorem \ref{sdle} and simplifying, as in \e{jmp}, produces
\begin{equation*}
   E_\lambda(\xi,N) =    w_0^{-N/k} \frac{\xi^{-\lambda N}}{\pi i }
    \sum_{j=0}^{d-2} \frac{2k^{1/2}}{N^{j+2}}    \sum_{s=0}^{j} \G\left(s+\frac 12 \right)  \alpha_{2s}(r_k,g_{\lambda,\xi} \cdot \omega_{\xi,j-s};z_0/k)
    + O\left( \frac{|w_0|^{-N/k}}{N^{d+1}}\right).
\end{equation*}
Therefore \e{pres} is true when $k \mid N$ (and $N_k=0$) for
\begin{equation} \label{lio}
  a_{\lambda,j}(0,n_k) := \frac{2k^{1/2}}{\pi i }\sum_{s=0}^{j} \G\left(s+\frac 12 \right)  \sum_\xi \xi^{-n_k} \alpha_{2s}(r_k,g_{\lambda,\xi} \cdot \omega_{\xi,j-s};z_0/k).
\end{equation}

  We next extend this formula  to all $N$ by the procedure used in Section \ref{boh}. Similarly to \e{pal}, the correction factor is
\begin{equation*}
  e^{\lambda v z/N}\prod_{r=0}^{v-1} \left(1-\xi^{-r} e^{ z} e^{- r z/N} \right) =  e^{\lambda v z/N} \sum_{n=0}^\infty \frac{\varphi_{\xi,n}(z,v)}{N^n} = \sum_{n=0}^\infty \frac{\varphi_{\lambda,\xi,n}(z,v)}{N^n}
\end{equation*}
for $0\lqs v\lqs k-1$, $\varphi_{\xi,n}(z,v)$ given explicitly in \e{dap} and
\begin{equation*}
  \varphi_{\lambda,\xi,n}(z,v) := \sum_{j=0}^n \frac{(\lambda v z)^j}{j!} \varphi_{\xi,n-j}(z,v).
\end{equation*}
Next set
\begin{equation*}
  \omega^*_{\lambda,\xi,j}(z,v):= \sum_{n=0}^j \omega_{\xi,n}(z) \cdot \varphi_{\lambda,\xi,j-n}(z,v)
\end{equation*}
and replace $\omega$ by $\omega^*$ in \e{lio} to define
\begin{equation*}
   a^*_{\lambda,j}(N_k,n_k) := \frac{2k^{1/2}}{\pi i }\sum_{s=0}^{j} \G\left(s+\frac 12 \right)
 \sum_\xi \xi^{-n_k}
 \alpha_{2s}(r_k,g_{\lambda,\xi} \cdot \omega^*_{\lambda,\xi,j-s}(\cdot,v);z_0/k)
\end{equation*}
for $N_k \equiv N \equiv -v \bmod k$.
Lastly
\begin{equation} \label{wbuzz}
  a_{\lambda,j}(N_k,n_k) := w_0^{-v/k} \sum_{n=0}^j v^{j-n} \binom{-2-n}{j-n} a^*_{\lambda,n}(N_k,n_k)
\end{equation}
gives the desired formula and this completes the proof of Theorem \ref{ma2}.
\end{proof}
With a similar computation to that of \e{cas2} at the end of Section \ref{boh},  the coefficient for the main term is
\begin{multline} \label{wcas2}
  a_{\lambda,0}(N_k,n_k) =    \frac{z_0}{\pi i} e^{- z_0(\lambda/k+1/2)}
  \frac{(w_0 /k)^{1/2}}{(1-e^{ z_0/k})^{1/2} } \\
\times \sum_\xi \bigg[\xi^{-n_k} \prod_{j=1}^{k-1} \left( \frac{1-\xi^{-j}e^{ z_0/k}}{1-\xi^{-j}}\right)^{j/k-1/2}
\times w_0^{N_k/k}\prod_{j=1}^{N_k} \left( 1-\xi^{j} e^{ z_0/k}\right)^{-1}\bigg].
\end{multline}
The following tables give examples of the approximations of Theorem \ref{ma2} to the first, second and fourth waves, with $\lambda$ equalling $10/7$, $2$ and $3/4$ respectively. The exact values of the waves are computed with \cite[Thm. 2.4]{OS18b}, which is very similar to Proposition \ref{kop2}.

\begin{table}[ht]
\centering
\begin{tabular}{ccc}
\hline
 $r$   & Theorem \ref{ma2} & \\
\hline
 $1$   &  $\textcolor{gray}{-5.4037745492500079} \times 10^{96}$ & \\
 $3$    & $-3.677\textcolor{gray}{9982882192229} \times 10^{96}$ & \\
 $5$   &  $-3.67756\textcolor{gray}{17167251202} \times 10^{96}$ & \\
 $7$    & $-3.677562198\textcolor{gray}{7899526} \times 10^{96}$  &  \\
\hline
\phantom{$W_{1}(3500,5000)$} & $-3.6775621984857302 \times 10^{96}$ & $W_{1}(3500,5000)$\\
\hline
\end{tabular}
\caption{The approximations of Theorem \ref{ma2} to $W_{1}(3500,5000)$.}
\end{table}

\begin{table}[ht]
\centering
\begin{tabular}{ccc}
\hline
 $r$   & Theorem \ref{ma2} & \\
\hline
 $1$   &  $1.2\textcolor{gray}{801787698217348} \times 10^{53}$ & \\
 $3$    & $1.242\textcolor{gray}{3916766083540} \times 10^{53}$ & \\
 $5$   &  $1.2424007\textcolor{gray}{469056981} \times 10^{53}$ & \\
 $7$    & $1.2424007\textcolor{gray}{533841407} \times 10^{53}$  &  \\
\hline
\phantom{$W_{2}(4001,8002)$} & $1.2424007618319874 \times 10^{53}$ & $W_{2}(4001,8002)$\\
\hline
\end{tabular}
\caption{The approximations of Theorem \ref{ma2} to $W_{2}(4001,8002)$.} \label{tabwv2}
\end{table}

\begin{table}[ht]
\centering
\begin{tabular}{ccc}
\hline
 $r$   & Theorem \ref{ma2} & \\
\hline
 $1$   &  $-1.1\textcolor{gray}{915023894770854} \times 10^{23}$ & \\
 $3$    & $-1.18891\textcolor{gray}{47720679222} \times 10^{23}$ & \\
 $5$   &  $-1.18891888\textcolor{gray}{77091459} \times 10^{23}$ & \\
 $7$    & $-1.1889188816\textcolor{gray}{772328} \times 10^{23}$  &  \\
\hline
\phantom{$W_{4}(4000,3000)$} & $-1.1889188816869245 \times 10^{23}$ & $W_{4}(4000,3000)$\\
\hline
\end{tabular}
\caption{The approximations of Theorem \ref{ma2} to $W_{4}(4000,3000)$.} \label{tabwv}
\end{table}



{\small
\bibliography{part-bib}
}

{\small 
\vskip 5mm
\noindent
\textsc{Dept. of Math, The CUNY Graduate Center, 365 Fifth Avenue, New York, NY 10016-4309, U.S.A.}

\noindent
{\em E-mail address:} \texttt{cosullivan@gc.cuny.edu}
}

\end{document}